\documentclass[11pt]{amsart}
\usepackage{amsmath,amssymb,mathtools}
\usepackage[a4paper,margin=1.1in]{geometry}
\usepackage{url}
\usepackage[colorlinks=true,allcolors=blue]{hyperref}

\newtheorem{theorem}{Theorem}[section]
\newtheorem{proposition}[theorem]{Proposition}
\newtheorem{lemma}[theorem]{Lemma}
\newtheorem{corollary}[theorem]{Corollary}
\theoremstyle{definition}

\newtheorem{example}[theorem]{Example}
\newtheorem{remark}[theorem]{Remark}

\newcommand{\N}{\mathcal N}
\newcommand{\res}[1]{{#1}^{\N}}
\newcommand{\sn}{\mathrel{\trianglelefteq\trianglelefteq}}
\newcommand{\W}{\mathrm W}
\newcommand{\ellW}{\ell_{\mathrm W}}
\newcommand{\bN}{b_{\N}}
\newcommand{\Def}{\operatorname{def}}

\newcommand{\Soc}{\operatorname{Soc}}
\newcommand{\Aut}{\operatorname{Aut}}
\newcommand{\Out}{\operatorname{Out}}
\newcommand{\Fit}{\mathrm F}
\newcommand{\Core}{\operatorname{Core}}
\newcommand{\Rad}{\mathrm R}
\newcommand{\Layer}{\mathrm E}
\newcommand{\wl}{\operatorname{wl}}
\newcommand{\Inn}{\operatorname{Inn}}
\newcommand{\GL}{\operatorname{GL}}
\newcommand{\SL}{\operatorname{SL}}
\newcommand{\AGL}{\operatorname{AGL}}
\newcommand{\Ind}{\operatorname{Ind}}
\newcommand{\FF}{\mathbb F}

\title[Wielandt subgroups of nilpotent residuals]%
 {Wielandt subgroups of nilpotent residuals and Fitting-height bounds}

\author{L\"u Gong}
\address{School of Mathematics and Statistics, Nantong University,
Nantong, Jiangsu 226019, China}
\email{lieningzai1917@126.com}

\author{Yan Yan}
\address{School of Mathematics and Statistics, Nantong University,
Nantong, Jiangsu 226019, China}
\email{yanyan320124@163.com}

\author{Enric Cosme Ll\'opez}
\address{Departament de Matem\`atiques, Universitat de Val\`encia,
C/ Dr.~Moliner, 50, 46100 Burjassot, Val\`encia, Spain; and
School of Mathematics and Statistics, Nantong University,
Nantong, Jiangsu 226019, China}
\email{enric.cosme@uv.es}

\subjclass[2020]{Primary 20D25; Secondary 20D10, 20D20}

\begin{document}

\begin{abstract}
For a finite group $G$, let $\mathrm W(G)$ be the intersection of
the normalizers of the nilpotent residuals of the subnormal subgroups
of $G$, and let $\ell_{\mathrm W}(G)$ be the length of the associated
upper series. Thus $\mathrm W(G)=G$ exactly when all these residuals
are normal in $G$.

We prove that every finite soluble group satisfies
$\ell_{\mathrm W}(G)\le h(G)\le4\ell_{\mathrm W}(G)$, where $h(G)$ is
the Fitting height, and that $h(G)\le3\ell_{\mathrm W}(G)$ if $|G|$ is
odd or the Sylow $2$-subgroups of $G$ are abelian. Both constants are
attained at every length, the constant $4$ by $\{2,3\}$-groups.
Moreover, every pair $(n,h)$ with $n\le h\le4n$ occurs as
$(\ell_{\mathrm W}(G),h(G))$.

The constant $4$ comes from primitive groups. A soluble primitive group
$V\rtimes M$ satisfies $\mathrm W(V\rtimes M)=V\rtimes M$ if and only
if every abelian subgroup of $\mathrm F(M)$ is cyclic. It then has
Fitting height four if and only if $\mathrm F(M)=C\times Q_8$, with $C$
cyclic, and $M$ induces the full group $\mathrm{Out}(Q_8)\cong S_3$. We
also determine when the base group of a soluble regular wreath product
is the first term of the series.
\end{abstract}

\maketitle

\section{Introduction}
All groups in this paper are finite. We write $H\sn G$ for
subnormality; the remaining notation is collected in
Section~\ref{sec:prelim}.

\subsection{Background: the Wielandt series}
The \emph{Wielandt subgroup}
\[
\omega(G)=\bigcap_{H\sn G}N_G(H)
\]
was introduced by Wielandt \cite{Wielandt}, who proved that it
contains every minimal normal subgroup of $G$; in particular,
$\omega(G)\ne1$ whenever $G\ne1$. Consequently the upper Wielandt
series
\[
\omega_0(G)=1,\qquad
\omega_{i+1}(G)/\omega_i(G)=\omega(G/\omega_i(G))
\]
reaches $G$, and the least index $i$ with $\omega_i(G)=G$ is the
\emph{Wielandt length} $\wl(G)$. The groups of Wielandt
length one are the $T$-groups, in which normality is transitive;
Gasch\"utz \cite{Gaschutz} proved that soluble $T$-groups are
metabelian, so they have Fitting height at most two.

If $H\sn G$, then
$H\trianglelefteq H\omega_1(G)\trianglelefteq\cdots\trianglelefteq
H\omega_{\wl(G)}(G)=G$, so the subnormal defect of $H$ is at most
$\wl(G)$. For soluble groups, Camina \cite{Camina}
showed that the derived length and the Fitting height are bounded in
terms of the Wielandt length, and Bryce and Cossey
\cite{BryceCossey} found the best possible bounds: if $G$ is soluble
and $\wl(G)=n$, then
\[
h(G)\le n+1,
\]
and the derived length of $G$ is at most $\lceil5n/3\rceil$ (see
\cite[Theorems~1 and~2]{CasoloFiniteLength}, where these bounds are
stated and extended to infinite soluble groups). Casolo studied how
the Wielandt length is controlled by the Fitting height together with
the subnormal defects \cite{CasoloSeries}.

\subsection{The residual-normalizer subgroup}
The Wielandt subgroup is sensitive to the fine structure of nilpotent
groups. Every subgroup of a nilpotent group $G$ is subnormal, so
$\omega(G)$ is then the norm $\bigcap_{H\le G}N_G(H)$ introduced by
Baer \cite{Baer}, which is contained in $Z_2(G)$ by a theorem of
Schenkman \cite{Schenkman} and is in general a proper subgroup. For
instance, $\omega(D_8)=Z(D_8)$ and $\wl(D_8)=2$. Since we
are interested in the nonnilpotent part of a group, we replace each
subnormal subgroup by its nilpotent residual. Let $\N$ be the class of
nilpotent groups, and put
\[
H^{\N}=\bigcap\{K\unlhd H:H/K\text{ is nilpotent}\}=\gamma_\infty(H),
\qquad \W(G)=\bigcap_{H\sn G}N_G(H^{\N}).
\]
The \emph{upper $\W$-series} is
\[
\W_0(G)=1,\qquad \W_{i+1}(G)/\W_i(G)=\W(G/\W_i(G)),
\]
and its length is denoted $\ellW(G)$, with $\ellW(1)=0$. We call $X$
a \emph{$\W$-group} if $\W(X)=X$, that is, if the nilpotent residual
of every subnormal subgroup of $X$ is normal in $X$.

Since $H^{\N}$ is characteristic in $H$, we have $\omega(G)\le\W(G)$,
and hence $\ellW(G)\le\wl(G)$
(Theorem~\ref{thm:general-height}). Every nilpotent group is a
$\W$-group, so $\ellW(D_8)=1$. More generally, $\W(G)$ contains the
generalized Fitting subgroup $\Fit^*(G)$ (Lemma~\ref{lem:Fstar}). On
the other hand, $\W(G)$ can be a proper subgroup: in
$G=S_3\wr C_2=(S_3\times S_3)\rtimes C_2$ the subgroup $S_3\times1$ is
subnormal with residual $C_3\times1$, which is not normalized by the
involution interchanging the factors; in fact $|G:\W(G)|=4$ and
$\ellW(G)=2$ (Example~\ref{ex:s3}).

\subsection{The question}
The upper $\W$-series retains a direct connection with subnormality:
for every $H\sn G$, it gives a subnormal chain from $H^{\N}$ to $G$
of length at most $\ellW(G)$ (Theorem~\ref{thm:general-height}). Thus
$\ellW(G)$ uniformly bounds the subnormal defects of these residuals.
For soluble $G$ one also has $\ellW(G)\le h(G)$, where $h(G)$ is the
Fitting height. To determine how this normalizer condition controls
the global structure of $G$, we ask for the reverse comparison:
\begin{quote}
\emph{How much Fitting height can a single layer
$\W_{i+1}(G)/\W_i(G)$ of the upper $\W$-series carry?}
\end{quote}
Each layer is a $\W$-group, so the question is to bound the Fitting
height of soluble $\W$-groups. The answer is four. The smallest
example is the affine group
\[
\AGL(2,3)=\FF_3^2\rtimes\GL(2,3)
\]
of order $432$, whose Fitting layers are $\FF_3^2$, $Q_8$, $C_3$ and
$C_2$ (Proposition~\ref{prop:agl}). Height four always arises in this
way: from a primitive quotient whose point stabilizer has a
quaternion subgroup $Q_8$ in its Fitting subgroup and induces on it
the full group $\Out(Q_8)\cong S_3$.

\subsection{Main results}
\begin{theorem}\label{thm:main-height}
Every finite soluble group $G$ satisfies
\[
\ellW(G)\le h(G)\le4\ellW(G).
\]
If $|G|$ is odd or the Sylow $2$-subgroups of $G$ are abelian, then
$h(G)\le3\ellW(G)$. Conversely, for all integers $n\ge1$ and $h$ with
$n\le h\le4n$ there is a soluble $\{2,3,7,29\}$-group $G$ with
$\ellW(G)=n$ and $h(G)=h$. The maximum $4n$ is attained by
$\{2,3\}$-groups, and the maximum $3n$ is attained both by
$\{3,7,29\}$-groups and by soluble $A$-groups of odd order.
\end{theorem}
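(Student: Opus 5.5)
The plan has three parts: subadditivity of Fitting height along the upper $\W$-series, a bound on the Fitting height of a single soluble $\W$-group, and explicit constructions for the converse. The lower bound $\ellW(G)\le h(G)$ comes from $\Fit(G)\le\Fit^*(G)\le\W(G)$ (Lemma~\ref{lem:Fstar}). Applying this to $G/\W_i(G)$ gives, by induction, $\Fit_i(G)\le\W_i(G)$, so the $\W$-series reaches $G$ no later than the Fitting series. For the upper bound, each layer $\W_{i+1}(G)/\W_i(G)$ is a $\W$-group. I would check that $\W(X)$ is inherited by quotients in the needed direction: if $X=\W(X)$, then residuals of subnormal subgroups of $X/N$ are images of residuals of subnormal subgroups of $X$. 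Since Fitting height is subadditive over normal series, $h(G)\le\sum_i h(\W_{i+1}/\W_i)$. Everything therefore reduces to showing that a soluble $\W$-group $X$ has $h(X)\le4$, and $h(X)\le3$ when $|X|$ is odd or the Sylow $2$-subgroups are abelian. The second statement should pass to subquotients, since both hypotheses do.

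For the bound on a $\W$-group I would argue by minimal counterexample. Being a $\W$-group is quotient-closed, and $h$ of a soluble group is the maximum of $h$ over quotients by the cores of maximal subgroups not containing $\Fit$. So we may reduce to $X$ primitive, $X=V\rtimes M$ with $V$ the unique minimal normal subgroup and $M$ acting faithfully and irreducibly. Then $h(X)=1+h(M)$, and it suffices to show $h(M)\le3$.

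The key structural step is the criterion from the abstract: $\W(V\rtimes M)=V\rtimes M$ forces every abelian subgroup of $\Fit(M)$ to be cyclic. To see this, take $A\le\Fit(M)$ with $A$ noncyclic abelian. Then $VA$ is subnormal, and its residual is $[V,A]$, or is built from the submodule generated by the non-trivial $A$-eigenspaces. Normality under $M$ would force this to be an $M$-submodule, hence all of $V$. A Clifford-theoretic count with a noncyclic $A$ then gives a contradiction. The resulting constraint is that $\Fit(M)=C\times Q$, with $C$ cyclic and $Q$ a $2$-group that is cyclic, generalized quaternion, dihedral or semidihedral. Irreducible faithful action plus the ``abelian subgroups cyclic'' condition should leave $Q\in\{1\}\cup\{\text{cyclic}\}\cup\{Q_8\}$ up to central cyclic factors.

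From this, $M/\Fit(M)$ embeds in $\Out(\Fit(M))$, which is $\Aut(C)\times\Out(Q)$. Here $\Aut(C)$ is abelian, and $\Out(Q_8)\cong S_3$ has Fitting height $2$. This gives $h(M)\le1+2=3$, hence $h(X)\le4$. When $Q$ is cyclic or trivial (automatic for odd order, and also when the Sylow $2$-subgroups are abelian, since then $Q_8$ cannot occur), $M/\Fit(M)$ is abelian. Then $h(M)\le2$ and $h(X)\le3$. I expect the main obstacle to be the criterion itself, specifically the necessity direction, and controlling $C_M(\Fit(M))\le\Fit(M)$ so that this embedding is valid.

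For the converse, I would first take $\AGL(2,3)$ (Proposition~\ref{prop:agl}), with $\ellW=1$ and $h=4$. To raise $\ellW$ I would iterate regular wreath products $X\wr C_p$ or semidirect constructions, using the paper's wreath-product criterion, so that each iteration adds exactly one to $\ellW$ and a prescribed amount to $h$. Varying the base layers then fills in all pairs $n\le h\le4n$. For the layers I would use groups of heights $1,2,3,4$:
\begin{itemize}
\item a nilpotent group for height $1$;
\item $S_3$ for height $2$;
\item a $\{3,7,29\}$ odd-order primitive group such as $\FF_{29}^k\rtimes(C_7\rtimes C_3)$ for height $3$ (to be checked);
\item $\AGL(2,3)$ for height $4$.
\end{itemize}
The delicate part of the construction is ensuring that $\ellW$ does not collapse, which would be verified with the wreath-product criterion.
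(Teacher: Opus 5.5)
Your lower bound, the subadditivity over the layers and the reduction to a primitive $V\rtimes M$ with $h=1+h(M)$ all match the paper. The key step, however, fails as you set it up.

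Testing the $\W$-property on $VA$, with $A\le\Fit(M)$ noncyclic abelian, only gives $(VA)^{\N}=[V,A]=V$, that is, $C_V(A)=0$. This is not contradictory. Take $D_8$ acting monomially on $V=\FF_3^2$: the action is faithful and irreducible, and the diagonal Klein four subgroup $A=\{\operatorname{diag}(\pm1,\pm1)\}\le\Fit(M)$ has $C_V(A)=0$. Restricted to $A$, $V$ is just two distinct nontrivial $M$-conjugate characters, so no Clifford count helps. Here the failure of the $\W$-property is seen on a single element: for $x=\operatorname{diag}(1,-1)$, $(V\langle x\rangle)^{\N}=[V,x]$ is a non-invariant line.

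So you must test on $V\langle x\rangle$ (or $[V,x]\langle x\rangle$) for every $x\in\Fit(M)$ of prime order. This gives $C_V(x)=0$ for all $1\ne x\in\Fit(M)$. Then use that an abelian $p'$-group whose nonidentity elements all act fixed-point-freely is cyclic: the eigencharacter of a common eigenvector is injective.

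Your list of $2$-groups is also wrong in both directions. Dihedral and semidihedral groups contain Klein four subgroups, so they are excluded. Generalized quaternion groups $Q_{2^m}$ with $m\ge4$ do occur: $Q_{16}\le\SL(2,7)$ acting on $\FF_7^2$ gives a primitive $\W$-group. So $Q\in\{1,\text{cyclic},Q_8\}$ is false. Your count therefore omits the needed fact that $\Aut(Q_{2^m})$ is a $2$-group for $m\ge4$ (there are $2^{2m-3}$ automorphisms $a\mapsto a^u$, $b\mapsto a^vb$), which keeps $\Out(\Fit(M))$ nilpotent in that case.

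The converse is only a plan, and parts of it would not work.
\begin{itemize}
\item \textbf{Orientation.} The stacking must put a fixed block in the base and the group built so far on top, $G_{n+1}=B\wr G_n$. This works because $h(B\wr E)=h(B)+h(E)$, whereas $G_n\wr C_p$ raises the height by at most one.
\item \textbf{Hypotheses of the base criterion.} It requires $B$ to be a nonnilpotent $\W$-group and $\Sigma(B)\cap\pi(\Fit(E))=\varnothing$. So you must propagate information about $\Fit$ of the top group. For iterated $\AGL(2,3)$ one has $\Sigma=\{2\}$ and $\Fit(G_n)$ a $3$-group, which also yields the $\{2,3\}$ claim.
\item \textbf{Nilpotent blocks.} Nilpotent height-one blocks are useless as bases: $C_2\wr C_2\cong D_8$ has $\ellW=1$.
\item \textbf{Pairs with $h$ close to $n$.} These need groups with $\ellW=h=m$. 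The paper builds them as $S_3\wr K$ and $A_4\wr K$, alternating the prime of $\Fit(K)$. Then the top $p$-layer of the base merges with $O_p(K)$, so the height grows by exactly one, while $\W(B\wr E)\le B^E$ forces the length up.
\item \textbf{$S_3$ as a height-two block.} It is admissible only over a top group with $O_2=1$. The paper uses $C_7\rtimes C_6$, with $\Sigma=\varnothing$, instead.
\item \textbf{Odd-order $A$-groups.} This claim is not addressed. Iterating one block gives nonabelian Sylow subgroups. One needs height-three blocks $\FF_t^{\,s}\rtimes(C_r\rtimes C_s)$ of pairwise coprime orders, supplied by Dirichlet's theorem.
\end{itemize}
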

The upper bounds are Theorem~\ref{thm:soluble-height} and
Corollary~\ref{cor:soluble-length}; the constructions are
Theorems~\ref{thm:soluble-exact}, \ref{thm:odd-exact}
and~\ref{thm:all-pairs}. A group with $h(G)\ge2$ is not of
prime-power order, so two primes are needed for every pair with
$h\ge2$. Theorem~\ref{thm:main-height} should be compared with the
Bryce--Cossey bound $h(G)\le\wl(G)+1$: for the $\{2,3\}$-groups $G_n$
with $\ellW(G_n)=n$ and $h(G_n)=4n$ of
Theorem~\ref{thm:soluble-exact} it gives
\[
 \frac{\wl(G_n)}{\ellW(G_n)}\ge4-\frac1n.
\]
This is a lower bound for the ratio; we do not determine the exact
Wielandt lengths of these groups.

The upper bound rests on a description of the soluble primitive
$\W$-groups. In it, the module enters only through faithfulness and
irreducibility.
\begin{theorem}\label{thm:main-quaternion}
Let $G=V\rtimes M$, where $V\ne0$ is an elementary abelian $p$-group
and $M$ is a soluble group acting faithfully and irreducibly on $V$.
Then $\W(G)=G$ if and only if every abelian subgroup of $\Fit(M)$ is
cyclic, that is, the Sylow subgroups of $\Fit(M)$ are cyclic or
generalized quaternion. In this case $\W(M)=M$, $h(G)\le4$, and
$h(G)=4$ if and only if
\[
\Fit(M)=C\times Q_8,\qquad C\text{ cyclic of odd order},
\]
and conjugation maps $M/\Fit(M)$ onto $\Out(Q_8)\cong S_3$. Every
soluble $\W$-group of height four has a primitive quotient of this
form.
\end{theorem}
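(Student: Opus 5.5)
The plan has four parts: the criterion, the inheritance $\W(M)=M$, the height bound with its equality case, and the reduction of height-four $\W$-groups to primitive quotients.

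\textbf{The criterion.} Throughout, $V=\Fit(G)=C_G(V)$ is the unique minimal normal subgroup of $G$. For any $H\sn G$ we have $H^{\N}\sn G$. If $H^{\N}\ne1$ is normal, it contains $V$. Hence $\W(G)=G$ forces the following: for every subnormal $H$ with $H^{\N}\ne 1$, we have $V\le H^{\N}$.

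For necessity, suppose $A\le\Fit(M)$ is abelian and noncyclic. Then some Sylow $r$-subgroup $R$ of $\Fit(M)$ contains $E\cong C_r\times C_r$, and $r\ne p$ because $O_p(M)=1$. Choose a hyperplane-type subspace: by coprime action, $V=\langle C_V(e):1\ne e\in E\rangle$. Pick $e$ with $U=C_V(e)\ne0,V$; this $U$ is a proper nonzero $E$-invariant subspace.

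The subgroup I would use is $H=U'\rtimes R_0$, where $R_0$ is a suitable subgroup of $R$ and $U'=[V,e]$, the $\langle e\rangle$-complement. Then $H\sn G$, since $R\le\Fit(M)$ gives $VR\sn G$ and then $H\sn VR$. We get $H^{\N}=[U',R_0]$, which is nontrivial but does not contain $V$. The normality forced above then fails.

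For sufficiency, assume all abelian subgroups of $\Fit(M)$ are cyclic. Let $H\sn G$ with $H^{\N}\ne1$. The goal is to show that $HV\cap M$ projects to a subnormal subgroup of $M$ acting fixed-point-freely enough that $[V,H\cap\ldots]=V$. The point is that a cyclic or quaternion normal subgroup acts on the irreducible module $V$ so that every nontrivial subnormal $p'$-subgroup $L$ of $M$ inside $\Fit(M)$ satisfies $C_V(L)=0$. This uses Clifford theory and the fact that the unique involution or element of order $r$ in $\Fit(M)$ is central and acts fixed-point-freely.

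One case split remains. If $H\cap V\ne 1$ or $H^{\N}\not\le V$, then $H^{\N}$ contains $[V,L]=V$ for some such $L$, and $H^{\N}/V$ is the residual of $HV/V$. Otherwise $H^{\N}\le V$, and we argue $H^{\N}=V$ directly. The conclusion is $H^{\N}=V\cdot(HV/V)^{\N}$, which reduces normality to $\W(M)=M$.

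\textbf{The inheritance $\W(M)=M$.} I would derive this from the structure: every subnormal subgroup of $M$ has a residual that is characteristic in a term lying in $\Fit(M)$ times a subgroup of $M/\Fit(M)$. More concretely, I would induct using the fact that $M/\Fit(M)$ embeds in $\Out$ of a product of cyclic groups and $Q_8$ or generalized quaternion groups. That makes $M/\Fit(M)$ metabelian of a very restricted shape, and subnormal residuals can be checked directly.

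\textbf{Height bound and equality case.} Since $\Fit(M)=C\times P_2$ with $C$ cyclic, and $M/\Fit(M)$ embeds in $\Aut(\Fit(M))$, we have $M/\Fit(M)\le \Aut(C)\times\Out(P_2)$ modulo inner automorphisms. If $P_2$ is cyclic or generalized quaternion of order at least $16$, its outer automorphism group is a $2$-group, so $M/\Fit(M)$ is abelian-by-$2$ with nilpotent image. This gives $h(M)\le2$ and $h(G)\le3$.

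If $P_2=Q_8$, then $\Out(Q_8)\cong S_3$. Here $h(M)=3$ exactly when the image is the full $S_3$; otherwise the image is a $2$-group or $C_3$, and $h(M)\le 2$. This gives $h(G)\le4$ with the stated equality case.

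\textbf{Reduction to primitive quotients.} Let $G$ be a soluble $\W$-group with $h(G)=4$. Choose a normal subgroup $N$ maximal such that $h(G/N)=4$. Then $G/N$ has a unique minimal normal subgroup, and its Frattini subgroup is trivial; otherwise we could pass to $G/\Phi$, since $h$ is unchanged modulo the Frattini subgroup. Quotients of $\W$-groups are $\W$-groups, so $G/N$ is a primitive $\W$-group of height four. The preceding characterization applies.

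\textbf{Main obstacle.} I expect the sufficiency direction to be the hardest step: controlling $H^{\N}$ for arbitrary subnormal $H$ that are not products of $V$-parts and $M$-parts.
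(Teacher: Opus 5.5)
The sufficiency direction has a genuine gap at its central step. You assert that for $H\sn G$ with $H^{\N}\ne1$ the residual contains $[V,L]=V$ for some nontrivial $L\le\Fit(M)$. But $L$ is a subgroup of $M$, not of $H$, and you give no reason why $H$ should contain $V$: a priori a subgroup of $G$ need not split as $(H\cap V)(H\cap M)$. This is exactly the obstacle you flag at the end, and Clifford theory does not address it.

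The missing idea is the defect inequality $[G,{}_dH]\le H$ for $H\sn G$ of defect at most $d$ (Lemma~\ref{lem:residual-basics}(iii)), applied to a lifted element. Let $\pi\colon G\to M$ be the projection and $A=\pi(H)$.
\begin{itemize}
\item If $A=1$, then $H\le V$ and $H^{\N}=1$.
\item Otherwise $A$ is a nontrivial subnormal subgroup of the soluble group $M$, so $A\cap\Fit(M)\ne1$. Choose $1\ne q$ in it and $y\in H$ with $\pi(y)=q$. Since $y$ acts on $V$ as $q$ and $C_V(q)=0$, the map $q-1$ is invertible, so $V=[V,{}_dy]\le[G,{}_dH]\le H$.
\end{itemize}
In the second case $H=VA$ with $A=H\cap M\sn M$. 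Then $[V,H]=V$ gives $V\le H^{\N}$, and $H^{\N}=VA^{\N}$.

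Two smaller points. Your second case ($H\cap V=1$ and $H^{\N}\le V$) is vacuous, since then $H^{\N}\le H\cap V=1$; the correct split is $H\le V$ versus $H\not\le V$. Also, fixed-point-freeness needs no Clifford theory. The subgroup of order $r$ of a Sylow subgroup of $\Fit(M)$ is characteristic in $\Fit(M)$, hence normal (not necessarily central) in $M$. So its fixed space is a proper $M$-submodule and therefore zero.

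Normality of $H^{\N}=VA^{\N}$ then requires $\W(M)=M$, derived from the hypothesis on $\Fit(M)$ alone. The isomorphism $M\cong G/V$ gives it only after $\W(G)=G$ is known. Your second part supplies no argument: ``subnormal residuals can be checked directly'' is not a proof, and the check is not routine. One route is as follows. Replace $H$ by $\Fit(M)H$, which has the same residual by Lemma~\ref{lem:Fstar}, and write $\Fit(M)=C\times P$ with $C$ cyclic of odd order and $P$ a $2$-group.
\begin{itemize}
\item If $H/\Fit(M)$ is nilpotent, then $R=H^{\N}\le C\times P$ splits as $(R\cap C)\times B$ with $B=R\cap P\trianglelefteq H$. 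The first factor is characteristic in the cyclic group $C$. The identity $B=[B,H]$ forces $B=1$ whenever $H$ induces a $2$-group on $B$. Since $\Aut(B)$ is a $2$-group unless $B\cong Q_8$, and $\Aut(P)$ is a $2$-group when $|P|\ge16$, this leaves $B=1$ or $B=P\cong Q_8$. Both are normal in $M$.
\item If $H/\Fit(M)$ is not nilpotent, then $P\cong Q_8$, and $M/\Fit(M)$ maps onto $\Out(Q_8)$ and has an abelian Fitting subgroup of index two. So $H/\Fit(M)$ contains an element inverting $(M/\Fit(M))'\cong C_3$. The defect inequality puts this $C_3$ inside $H/\Fit(M)$. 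Since $M/\Fit(M)$ modulo its derived subgroup is abelian, $H\trianglelefteq M$.
\end{itemize}

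The rest of your plan is sound. Necessity works with $R_0=\langle e\rangle$, because $[V,e]\langle e\rangle\sn G$ has as residual the nonzero proper subspace $[V,e]$. The height count agrees with the paper's. Choosing $N$ maximal with $h(G/N)=4$ is a valid substitute for maximizing over the cores of maximal subgroups.
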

These assertions are Theorems~\ref{thm:affine-criterion}
and~\ref{thm:primitive-height}, and Theorem~\ref{thm:soluble-height}.
The implication from the structure of $\Fit(M)$ to $\W(M)=M$ is
Proposition~\ref{prop:rank-one}. For reducible modules we only use
the sufficient condition of Lemma~\ref{lem:affine-lifting}.

To show that the bounds are attained at every length, we stack
$\W$-groups by regular wreath products $B\wr E=B^E\rtimes E$. Ideally
the base $B^E$ is exactly the first term of the $\W$-series, so that
the length increases by one and the heights add. For soluble $B$, let
$\Sigma(B)$ be the set of primes $r$ such that $B$ has a primitive
quotient $U\rtimes P$, with $P\ne1$ an $r$-group acting faithfully and
irreducibly on $U$.
\begin{theorem}\label{thm:main-wreath}
Let $B,E$ be soluble with $B$ nonnilpotent. Then
$\W(B\wr E)\le B^E$, so that $\ellW(B\wr E)\ge1+\ellW(E)$. If
moreover $\W(B)=B$, then
\[
\W(B\wr E)=B^E\quad\Longleftrightarrow\quad
\Sigma(B)\cap\pi(\Fit(E))=\varnothing,
\]
and in this case $\ellW(B\wr E)=1+\ellW(E)$ and
$h(B\wr E)=h(B)+h(E)$. For arbitrary soluble $B\ne1$ and $E$, put
$b=h(B)$ and $D=B/\Fit_{b-1}(B)$. Then
\[
h(B\wr E)=
\begin{cases}
b+h(E),&D\text{ is not of prime-power order},\\
b+h(E/O_p(E)),&D\text{ is a }p\text{-group}.
\end{cases}
\]
\end{theorem}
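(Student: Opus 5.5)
The proof splits into three parts: the inclusion $\W(B\wr E)\le B^E$, the criterion for equality when $\W(B)=B$, and the Fitting-height formula. Throughout, write $G=B\wr E$, $N=B^E$, and $B_e$ for the coordinate copy of $B$ indexed by $e\in E$.

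\emph{Inclusion.} Each $B_e$ is normal in $N$, so it is subnormal in $G$. Since $B$ is nonnilpotent, $B_e^{\N}=(B^{\N})_e$ is a nontrivial subgroup supported on the coordinate $e$. An element $ne'$ with $e'\ne1$ carries it to $(B^{\N})_{ee'}$, so it does not normalize it. Hence $\W(G)\le N$. Moreover $G/N\cong E$, and an induction on the series (using that $\W$ of a quotient contains the image of $\W$) gives $\W_i(G)\le N\W_{i-1}(G)$-type containments. From these, $\ellW(G)\ge 1+\ellW(E)$, provided we know that $\W_{i}(G)N/N\le \W_{i-1}(E)$ after the first step. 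I would prove this by noting that subnormal subgroups of $E$ lift to subnormal subgroups $NH$ of $G$.

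\emph{Criterion.} Assume $\W(B)=B$. We must decide when every $H\sn G$ has $H^{\N}$ normalized by $N$. The key reduction is that $N$ normalizes $H^{\N}$ for all $H\sn G$ iff it does so for $H=N\cap H$-type subgroups and for $H$ projecting onto a subnormal $F\le E$. Since $\Fit(E)$ controls which subgroups of $E$ are subnormal and small, the essential test case is $H=N\rtimes P$ with $P$ an $r$-subgroup of $\Fit(E)$, or better $H=(\prod_e X_e)P$.

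For the direction $\Leftarrow$: if $H\sn G$, then $H\cap N\sn N=B^E$, and since $\W(B)=B$ the residuals inside $N$ are normal. For the part of $H$ outside $N$, the nilpotent residual of $HN\cap\ldots$ is governed by the action of $\pi(\Fit(E))$-elements on the base. Take a primitive quotient $U\rtimes Q$ of $B$. If $r\notin\Sigma(B)$, an $r$-element permuting coordinates acts on each chief factor of $N$ in a way that creates no new noncentral residual. I would use the Fitting-series description of $\W$ (Lemma~\ref{lem:affine-lifting} and the primitive criterion, Theorem~\ref{thm:affine-criterion}) chief factor by chief factor.

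For the direction $\Rightarrow$: given $r\in\Sigma(B)\cap\pi(\Fit(E))$, take $x\in\Fit(E)$ of order $r$. Then $\langle x\rangle\sn E$. From the primitive quotient $U\rtimes P$, build $H=(U^E\cdot\Delta)\langle x\rangle$, where $\Delta$ is a diagonal-type copy of $P$ twisted along $\langle x\rangle$-orbits. This $H$ is subnormal, and its residual involves $[U^E,\langle x\rangle]$ but not all of $U^E$. The residual is therefore not normalized by the diagonal base elements of $P$, because $P$ acts faithfully and irreducibly. This construction is the main obstacle: one must check both subnormality and the exact residual. I would first compute in the quotient $(U\rtimes P)\wr\langle x\rangle$ and then lift.

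\emph{Heights.} If $\W(G)=N$, then $\ellW(G)=1+\ellW(E)$ follows from the above. For the general height formula, use $\Fit_i(G)=\Fit_i(B)^E$ for $i<b$. This holds because the coordinate projections show that a nilpotent normal subgroup of $G$ inside $N$ is contained in $\Fit(B)^E$, and conversely; we also need that no element outside $N$ lies in $\Fit_i(G)$ while $N/\Fit_i(G)\cap N$ is still nonnilpotent. Then
\[
G/\Fit_{b-1}(G)\cong D\wr E.
\]
The height of $D\wr E$ is $1+h(E)$ if $D$ is not of prime-power order. In that case $\Fit(D\wr E)=\Fit(D)^E$, since a nonnilpotent $D$, or a nilpotent $D$ with two primes, admits no coordinate-permuting nilpotent normal extension; this uses that $C_{D\wr E}(\text{base})$ is trivial. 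If $D$ is a $p$-group, then $\Fit(D\wr E)=D^E O_p(E)$ by the standard fact that $D\wr O_p(E)$ is a $p$-group, which gives $b+h(E/O_p(E))$. The hypothesis $\W(B)=B$ case then reads $h(G)=h(B)+h(E)$: in that case $D$ of prime-power order would force $p\in\Sigma(B)$-type compatibility, and the $\Sigma$ condition ensures $O_p(E)$ contributes no collapse. I would verify this last point separately, and it may need $D$ non-prime-power from nonnilpotency of $B$.
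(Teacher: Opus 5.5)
Your argument for $\W(B\wr E)\le B^E$ and $\ellW(B\wr E)\ge1+\ellW(E)$ is essentially fine. The criterion, however, has two genuine gaps.

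\emph{The witness for $\W(G)\ne N$.} Your construction for the case $r\in\Sigma(B)\cap\pi(\Fit(E))$ does not work. Since $P\ne1$ acts faithfully and irreducibly on $V$, the nonzero $P$-submodule $[V,P]$ equals $V$. So if $\Delta\le P^E$ projects onto $P$ at the coordinates, as any diagonal-type copy does, then $[U^E,\Delta]=U^E$. This puts $U^E$ in every term of the lower central series of $H=U^E\Delta\langle x\rangle$, while $H/U^E$ is an $r$-group. Hence $H^{\N}=U^E$, which is normal in $\overline B\wr E$, and there is no contradiction. Moreover, the diagonal elements $(a,\dots,a)$ of $P^E$ commute with every coordinate permutation, so they normalize $[U^E,\langle x\rangle]$; they can never be the witnesses.

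The paper takes $H=U^E\langle x\rangle$ with no $P$-part. It is subnormal because $\langle x\rangle$ lies in the normal $r$-subgroup $P^EO_r(E)$ of $P^E\rtimes E$. Its residual $[U^E,\langle x\rangle]$ is the set of tuples with zero sum along each $\langle x\rangle$-orbit. Take $i\ne j$ in one orbit and the tuple with $v$ at $i$, $-v$ at $j$. Acting on it by an element $a\in P$ with $v^a\ne v$, placed at the \emph{single} coordinate $i$, breaks the zero-sum condition. The transfer from $G$ goes through the quotient $G\to\overline B\wr E$ and Theorem~\ref{thm:functor}. It cannot go through $(U\rtimes P)\wr\langle x\rangle$: that group is not subnormal in $\overline B\wr E$ once $E\ne\langle x\rangle$, so the hypothesis does not pass to it.

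\emph{Sufficiency.} Here you give no mechanism. Lemma~\ref{lem:affine-lifting} and Theorem~\ref{thm:affine-criterion} concern fixed-point-free actions in primitive groups and say nothing about normalizers of residuals in $B\wr E$. Your test case $N\rtimes P$ is vacuous, since $N$ lies in it and normalizes its characteristic residual.

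The missing idea is to prove $N^{\N}\le H^{\N}$ for every $H\sn G$ with $H\not\le N$. Then $N$, being nilpotent modulo $N^{\N}$, normalizes $H^{\N}$ by Lemma~\ref{lem:Fstar}. The steps are:
\begin{enumerate}
\item Let $T\sn E$ be the image of $H$ and pick $r$ with $S=O_r(T)\ne1$. Then $r\in\pi(\Fit(E))$ by Lemma~\ref{lem:subnormal-fitting}, so $r\notin\Sigma(B)$.
\item Let $H_0$ be the preimage of $S$ in $H$, put $K=H_0\cap N$, and let $J=K^{\N}=\prod_iJ_i$. This $J$ is normal in $NH_0$ because $\W(N)=N$ (Theorem~\ref{thm:W-product}).
\item Put $F_0=\prod_iF_i$ with $F_i/J_i=\Fit(B_i/J_i)$. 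Modulo $F_0$ the image of $H_0$ is a subnormal $r$-group, so $[N/F_0,\overline{H_0}]$ is an $r$-group.
\item An element of $H_0$ that moves coordinates makes this $r$-group project onto every $B_i/F_i$.
\item Lemma~\ref{lem:sigma} and $r\notin\Sigma(B)$ then force each $B_i/J_i$ to be nilpotent, whence $N^{\N}\le J\le H^{\N}$.
\end{enumerate}

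\emph{Heights.} The same projection argument is what your height computation needs at every step: if $x\in O_r(Y\wr E)$ has nontrivial top component, then $[Y^E,x]$ is an $r$-group projecting onto $Y$ at every coordinate. Your substitute, that $C_{D\wr E}(D^E)=1$, is false precisely at the top step: there $D=B/\Fit_{b-1}(B)$ is nilpotent, and $Z(D)^E$ centralizes the base. Finally, $h(B\wr E)=h(B)+h(E)$ under the criterion does not come from $D$ being of non-prime-power order, which fails for $B=S_3$. Instead, if $D$ is a $p$-group, Lemma~\ref{lem:sigma} applied to $B/\Fit_{b-2}(B)$ gives $p\in\Sigma(B)$, hence $O_p(E)=1$.
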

These are Lemma~\ref{lem:base-upper} and
Theorems~\ref{thm:wreath-base} and~\ref{thm:wreath-height}. Since
$\Sigma(B)\subseteq\pi(B/B')$, the condition
$\gcd(|B/B'|,|\Fit(E)|)=1$ is sufficient. It is not necessary: for
the base $C_7\rtimes C_6$ every soluble top group is allowed
(Example~\ref{ex:c7c6}).

In Section~\ref{sec:extremal} we also note that in a soluble group with
$h(G)=4\ellW(G)$ every layer of the $\W$-series has height four, and
that equality can be read off from a single characteristic subgroup
(Proposition~\ref{prop:equality}).

\subsection{Strategy of the proofs}
\emph{Reduction to primitive groups.} Quotients of $\W$-groups are
$\W$-groups (Theorem~\ref{thm:functor}), and the Fitting height of a
soluble group is the maximum of the heights of its primitive
quotients. It therefore suffices to bound the height of a soluble
primitive $\W$-group $G=V\rtimes M$.

\emph{The affine criterion.} Let $1\ne x\in\Fit(M)$. The subgroup
$[V,x]\langle x\rangle$ is subnormal in $G$, and its nilpotent
residual is $[V,x]$. If $G$ is a $\W$-group, this residual is normal
in $G$, hence equals $V$ by irreducibility, and $x$ acts
fixed-point-freely. This forces every abelian subgroup of $\Fit(M)$
to be cyclic. Conversely, if every abelian subgroup of $\Fit(M)$ is
cyclic, then every element of prime order of $\Fit(M)$ generates a
normal subgroup of $M$ and hence acts fixed-point-freely, and the
residuals of the subnormal subgroups of $G$ are controlled by $M$.

\emph{Counting layers.} The quotient $M/\Fit(M)$ embeds in
$\Out(\Fit(M))$. For a nilpotent group all of whose abelian subgroups
are cyclic, this outer automorphism group has height at most two, with
equality only when a factor $Q_8$ occurs and $M$ induces all of
$\Out(Q_8)\cong S_3$. Counting the layers $V$, $\Fit(M)$ and
$M/\Fit(M)$ gives $h(G)\le1+1+2=4$.

\emph{Lengths and sharpness.} Subadditivity of Fitting height along the
upper $\W$-series gives $h(G)\le4\ellW(G)$. For sharpness we iterate
wreath products of $\AGL(2,3)$ with itself, using
Theorem~\ref{thm:main-wreath}: since $\Sigma(\AGL(2,3))=\{2\}$ and the
Fitting subgroups of the iterated groups are $3$-groups, each step adds
one layer to the series and four to the height.

\subsection{Related work and organization}
Several authors have studied the intersection of the normalizers of
the nilpotent residuals of \emph{all} subgroups of a group
\cite{GongGuo2013,GongGuo2016,ShenShiQian}. Gong and
Isaacs~\cite{GongIsaacs} proved that a subgroup normalizing the
nilpotent residuals of all non-subnormal subgroups also normalizes
those of the subnormal subgroups, and Ballester-Bolinches, Kamornikov
and Meng \cite{BKM} extended this to subgroup-closed saturated
formations containing the nilpotent groups, answering a question of
Gong and Isaacs. Hu, Huang and Skiba \cite{HuHuangSkiba} studied the
$\sigma$-nilpotent norm $N_\sigma(G)$, defined through all subgroups,
its iterated terminal term $N_\sigma^\infty(G)$, and the
$\sigma$-nilpotent length $l_\sigma(G)$. Their Theorem~1.4(i) says
that $G$ is $\sigma$-soluble with
$l_\sigma(G)\le r$, for $r>1$, if and only if
$G/N_\sigma^\infty(G)$ has these properties. For the partition of
the primes into singletons, it follows in particular that, for
soluble $G$, the norm of the nilpotent residuals of all subgroups
never increases the Fitting height beyond
$\max\{2,h(G/N_\sigma(G))\}$. This is in sharp contrast with
$\W(G)$, whose layers can carry four Fitting layers. The behaviour of
residuals of joins of subnormal subgroups, which underlies
Lemma~\ref{lem:Fstar}, has been studied for formations by
Ballester-Bolinches, Kamornikov and P\'erez-Calabuig \cite{BKPC}.

Section~\ref{sec:prelim} collects notation and standard facts, and
Section~\ref{sec:structure} the basic properties of $\W$ and of its
upper series. Section~\ref{sec:rank-one} studies soluble groups whose
Fitting subgroup has only cyclic abelian subgroups, and
Section~\ref{sec:affine} applies this to primitive groups and proves
the height bounds. Section~\ref{sec:construction} gives the examples
of length one, Section~\ref{sec:soluble-wreath} treats wreath
products, and Section~\ref{sec:extremal} constructs the extremal
groups and all pairs $(\ellW,h)$. The appendix treats groups with
trivial soluble radical and nonsoluble length; it depends on the
classification of finite simple groups and is not used in the body of
the paper. A companion paper \cite{DefectCompanion} studies residual
subnormal defect two in affine groups; none of the proofs here
depends on it.

A reader interested only in the upper bound $h(G)\le4\ellW(G)$ can
read Sections~\ref{sec:prelim}--\ref{sec:affine}. Sections~\ref{sec:construction}--\ref{sec:extremal}
show that the bound is sharp, and Table~\ref{tab:examples} lists the
examples.

\section{Preliminaries}
\label{sec:prelim}

\subsection{Notation}
Write $\Fit(G)$ for the Fitting subgroup and $\Fit_i(G)$ for the upper
Fitting series, with $\Fit_0(G)=1$; for soluble $G$ its length is the
Fitting height $h(G)$, with $h(1)=0$. An \emph{$A$-group} is a group
with abelian Sylow subgroups, $\pi(G)$ is the set of prime divisors of
$|G|$, and $O_p(G)$ is the largest normal $p$-subgroup. Regular wreath
products are written $B\wr E=B^E\rtimes E$, where $E$ permutes the
coordinate set $E$ regularly. Modules are right modules, with
$[v,g]=v^g-v$ when the underlying group is written additively. We
write $\Layer(G)$ for the layer and $\Fit^*(G)=\Fit(G)\Layer(G)$ for
the generalized Fitting subgroup; the length of the upper generalized
Fitting series is $h^*(G)$, and $h^*(G)=h(G)$ for soluble $G$.

For $L\sn G$, let $\Def_G(L)$ be the usual subnormal defect of $L$ in
$G$, that is, the minimum length of a subnormal chain from $L$ to $G$.
Put $\bN(G)=\max_{H\sn G}\Def_G(H^{\N})$.

\subsection{Basic facts}
Write $[A,{}_dB]$ for $d$ successive commutators with $B$, where
$[A,{}_0B]=A$. Since the nilpotent residual is the stable term of the
lower central series,
\begin{equation}\label{eq:stable}
 [G^{\N},G]=G^{\N}.
\end{equation}
\begin{lemma}\label{lem:residual-basics}
The following statements hold.
\begin{enumerate}
\renewcommand{\labelenumi}{\textup{(\roman{enumi})}}
\item If $H\le G$, then $H^{\N}\le G^{\N}$.
\item If $f:G\twoheadrightarrow Q$, then $f(G^{\N})=Q^{\N}$.
In particular, $(HN/N)^{\N}=H^{\N}N/N$ for $N\trianglelefteq G$.
\item If $H\sn Y$ has defect at most $d$, then $[Y,{}_dH]\le H$.
\end{enumerate}
\end{lemma}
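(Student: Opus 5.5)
All three parts follow from the description of $H^{\N}=\gamma_\infty(H)$ as the stable term of the lower central series together with \eqref{eq:stable}. The plan is to argue directly from the lower central series, part by part.

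For (i), I would show by induction on $i$ that $\gamma_i(H)\le\gamma_i(G)$. The case $i=1$ is $H\le G$. For the inductive step,
\[
\gamma_{i+1}(H)=[\gamma_i(H),H]\le[\gamma_i(G),G]=\gamma_{i+1}(G).
\]
Taking $i$ large enough that both series have stabilized gives $H^{\N}\le G^{\N}$.

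For (ii), the same induction applies to images: $f(\gamma_i(G))=\gamma_i(Q)$, because a surjective homomorphism maps commutator subgroups $[A,B]$ onto $[f(A),f(B)]$. Taking $i$ large gives $f(G^{\N})=Q^{\N}$. The quotient statement then follows by applying this to $f\colon H\to HN/N$, $h\mapsto hN$, which has image $HN/N$ and sends $H^{\N}$ onto $H^{\N}N/N$.

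For (iii), let $H=H_d\trianglelefteq H_{d-1}\trianglelefteq\cdots\trianglelefteq H_0=Y$ be a subnormal chain. If the defect is smaller than $d$, I would pad the chain by repeating $H$, so that it has exactly $d$ steps. I would then prove by induction on $j$ that $[Y,{}_jH]\le H_j$. The case $j=0$ is trivial. For the inductive step, since $H_{j+1}\trianglelefteq H_j$ and $H\le H_{j+1}$,
\[
[Y,{}_{j+1}H]=\bigl[[Y,{}_jH],H\bigr]\le[H_j,H_{j+1}]\le H_{j+1}.
\]
Setting $j=d$ gives $[Y,{}_dH]\le H$.

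None of the steps is a real obstacle. The only points needing care are the convention that $[A,{}_{j+1}B]$ means $[[A,{}_jB],B]$, and the padding of the chain in (iii) when the defect is less than $d$.
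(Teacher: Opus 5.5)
Your proof is correct.

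Part (iii) is the paper's argument: induction along a subnormal chain, using $[H_j,H]\le[H_j,H_{j+1}]\le H_{j+1}$. Your padding remark makes explicit what the paper leaves implicit when it takes a chain of length exactly $d$.

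For (i) and (ii) you take a different, equally valid route. You compute termwise in the lower central series, proving $\gamma_i(H)\le\gamma_i(G)$ and $f(\gamma_i(G))=\gamma_i(Q)$ for every $i$. The paper instead uses only that $G^{\N}$ is the smallest normal subgroup with nilpotent quotient:
\begin{itemize}
\item in (i), $H/(H\cap G^{\N})$ embeds in $G/G^{\N}$, so $H^{\N}\le H\cap G^{\N}$;
\item in (ii), $Q/f(G^{\N})$ is a quotient of $G/G^{\N}$, and $G/f^{-1}(Q^{\N})\cong Q/Q^{\N}$; both are nilpotent, which gives the two inclusions.
\end{itemize}

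Your version gives the stronger statements at every level of the series. The paper's version is shorter and never uses the identification $H^{\N}=\gamma_\infty(H)$; it relies only on the class of nilpotent groups being closed under subgroups and quotients. It therefore transfers unchanged to residuals for other subgroup-closed formations, which is the setting of some of the related work the paper cites.
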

\begin{proof}
For (i), $H/(H\cap G^{\N})$ embeds in the nilpotent group
$G/G^{\N}$. For (ii), both $Q/f(G^{\N})$ and
$G/f^{-1}(Q^{\N})$ are nilpotent, giving the two inclusions.
For (iii), choose a chain
$H=H_0\trianglelefteq\cdots\trianglelefteq H_d=Y$.
The inclusions $[H_i,H]\le H_{i-1}$ for $1\le i\le d$
give the assertion by induction.
\end{proof}

\begin{lemma}\label{lem:Fstar}
For every $H\sn G$, both $\Fit(G)$ and $\Layer(G)$ normalize
$H^{\N}$, and $(\Fit(G)H)^{\N}=H^{\N}$. In particular,
\begin{equation}\label{eq:Fstar}
 \Fit^*(G)\le\W(G).
\end{equation}
\end{lemma}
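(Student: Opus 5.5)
We first prove $(\Fit(G)H)^{\N}=H^{\N}$ for every $H\sn G$, and then derive the normalizing statements from it and from a separate argument for $\Layer(G)$.

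\emph{Step 1: $(FH)^{\N}=H^{\N}$, where $F=\Fit(G)$.} Put $R=H^{\N}$ and $d=\Def_G(H)$. Since $H\sn G$, we have $F\cap H\le\Fit(H)$ and $FH\sn G$. The key claim is that $FH/R$ is nilpotent. For this we use the standard fact that the join of two subnormal nilpotent subgroups is nilpotent (Fitting's theorem in its subnormal form), applied inside a group where $R$ is normal. The trouble is that $R$ need not be normal in $FH$ a priori, so we first show that $F$ normalizes $R$. Suppose $x\in F$. Then $\langle x\rangle^{H}\le F$ is nilpotent, and $H$ normalizes $F$, so $FH=F\rtimes_{\text{(not split)}}H$ is a group with $F\trianglelefteq FH$. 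Consider the group $X=FH$ and its normal subgroup $F$. By Lemma~\ref{lem:residual-basics}(ii), $(X/F)^{\N}=X^{\N}F/F$. Since $X/F\cong H/(H\cap F)$, this gives $X^{\N}F=RF$. It remains to see that $X^{\N}\le R$.

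\emph{Step 2: $X^{\N}\le R$.} We argue by induction on $|G|$, or more concretely via iterated commutators. Write $R=[R,H]$ by \eqref{eq:stable}. We show that $R$ is normalized by $F$. By Lemma~\ref{lem:residual-basics}(iii), $[G,{}_dH]\le H$. Hence $[F,{}_dH]\le F\cap H$, and this lies in the nilpotent normal subgroup $F\cap H$ of $H$. Since $R=[R,{}_kH]$ for all $k$, a three-subgroup and coprime-free commutator calculus argument shows the following: for $x\in F$ we have $[R,x]\le[F,R]$, and $[F,R]=[F,{}_m R]$ modulo terms lying in $H$. Because $F$ is nilpotent, $[F,{}_c F]=1$ for large $c$. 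We then use the following standard lemma: if $N$ is a nilpotent normal subgroup and $K$ is perfect-modulo-nilpotent-with-$K=[K,H]$, then $[N,K]\le K$ whenever $[N,{}_dH]\le H$. I expect to prove this by induction on the class of $F$, passing to $G/Z(F)$-type quotients. This is the main obstacle. A cleaner route: replace $G$ by $Y=FH$, in which $H$ is subnormal and $F$ is a normal nilpotent subgroup. In $Y$, the subgroups $H$ and $F(H\cap F)$ give $Y/F\cong H/(H\cap F)$. Then the residual $Y^{\N}$ is the smallest normal $K$ with $Y/K$ nilpotent. Now $H^{\N}$ is characteristic in $H$, and $Y=HF$ with $F$ nilpotent normal; we may show that $H^{\N}$ is subnormal in $Y$ and normalized by $Y$. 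This uses Wielandt's result that a subnormal subgroup $R$ with $R=[R,R]$-type condition, namely $R=[R,H]$, is normalized by any normal nilpotent subgroup. I would establish this by induction on $\Def_Y(H)$, using at each stage the identity $R=[R,H]$ and the inclusion $[F,H]\le F$. Once $F$ normalizes $R$, we have $R\trianglelefteq FH$, and $FH/R$ is generated by the subnormal nilpotent subgroups $FR/R$ and $H/R$. Hence $FH/R$ is nilpotent, which gives $(FH)^{\N}\le R$. The reverse inclusion is Lemma~\ref{lem:residual-basics}(i).

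\emph{Step 3: the layer.} Let $E=\Layer(G)$ and let $L$ be a component. Wielandt's theorem says that a component either normalizes every subnormal subgroup $H$ or lies in $H$; more precisely, $L\le H$ or $[L,H]=1$. In the first case, $L$ is perfect and $L\sn H$, so $L\le H^{\N}$, and hence $L$ normalizes $H^{\N}$ by Lemma~\ref{lem:residual-basics}(i) and the fact that $L$ is subnormal in $H$ and perfect; indeed $L\le H^{\N}$ gives the normalization trivially. In the second case, $L$ centralizes $H$ and therefore $H^{\N}$. Since $E$ is generated by its components, $E$ normalizes $H^{\N}$. Together with Step 1, this shows that $\Fit(G)\Layer(G)=\Fit^*(G)$ normalizes $H^{\N}$ for all $H\sn G$, which gives \eqref{eq:Fstar}.
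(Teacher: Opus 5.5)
\textbf{Genuine gap: you never show that $\Fit(G)$ normalizes $H^{\N}$.}

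Your Step~3 is correct and is the paper's argument for $\Layer(G)$. The last part of your Step~2 is also a correct reduction. If $F=\Fit(G)$ normalizes $R=H^{\N}$, then $FH/R$ is generated by the normal nilpotent subgroup $FR/R$ and the subnormal nilpotent subgroup $H/R$. Hence $FH/R$ is nilpotent, and $(FH)^{\N}=R$.

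The problem is that you never prove that $F$ normalizes $R$, and for $\Fit(G)$ this is the entire content of the lemma. The commutator sketches in Step~2 are not arguments. The claim about $[F,R]$ ``modulo terms lying in $H$'' and the ``standard lemma'' about perfect-modulo-nilpotent subgroups are neither precise nor proved. The ``Wielandt result'' in your cleaner route is, with $R=H^{\N}$ and $H\sn Y$, just a restatement of what you need. The induction on $\Def_Y(H)$ is promised but not carried out.

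The paper closes this step by citing the nilpotent case of the residual-join theorem: for subnormal $A,B$ with $A$ nilpotent, $\langle A,B\rangle^{\N}=B^{\N}$. This gives $(\Fit(G)H)^{\N}=H^{\N}$ directly. Normalization then follows because $H^{\N}$ is characteristic in $\Fit(G)H$, which is the reverse of your order.

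\textbf{How to make your induction work.} Apply the induction on defect to the normal closure.

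Let $Y=FH$ with $F\trianglelefteq Y$ nilpotent and $H\sn Y$ of defect $d\ge1$. Then $H^{Y}=H[H,F]=(F\cap H^{Y})H$, because $Y=HF$ and $[H,F]\le F$ is normalized by $H$. The subgroup $F\cap H^{Y}$ is nilpotent and normal in $H^{Y}$, and $H$ has defect at most $d-1$ in $H^{Y}$.

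By induction, $(H^{Y})^{\N}=H^{\N}$. So $H^{\N}$ is characteristic in $H^{Y}\trianglelefteq Y$, and therefore normal in $Y$. Your Fitting-theorem argument then gives $Y^{\N}\le H^{\N}$, with the reverse inclusion from Lemma~\ref{lem:residual-basics}(i). Taking $F=\Fit(G)$ and $Y=\Fit(G)H$ supplies the missing step.
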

\begin{proof}
We use the nilpotent case of the residual-join theorem in
\cite[Theorem~2.5(i), p.~420]{BE}: if $A,B$ are subnormal in a
finite group and $A$ is nilpotent, then
$\langle A,B\rangle^{\N}=B^{\N}$. Indeed, apply that theorem to
the Fitting formation $\N$, using that a nilpotent group is
quasinilpotent and $A^{\N}=1$. No solubility hypothesis is needed
on the ambient group. Taking $A=\Fit(G)$ and $B=H$ gives
$(\Fit(G)H)^{\N}=H^{\N}$. The latter subgroup is characteristic in
$\Fit(G)H$ and hence normalized by $\Fit(G)$.
For the layer, use the standard component alternative
\cite[Chapter~9]{Isaacs}: if $L$ is a component and $H\sn G$, then
either $L\le H$ or $[L,H]=1$.
In the first case $L$ normalizes $H^{\N}$ because $H^{\N}$ is
characteristic in $H$; in the second case it centralizes it.
Thus every component normalizes $H^{\N}$, as does their product
$\Layer(G)$. Intersecting over $H\sn G$ proves \eqref{eq:Fstar}.
\end{proof}

We use the self-centralizing property $C_G(\Fit^*(G))\le\Fit^*(G)$
\cite[Chapter~9]{Isaacs} and the following standard results of finite
soluble group theory \cite{DH,Isaacs}.
\begin{enumerate}
\renewcommand{\labelenumi}{\textup{(S\arabic{enumi})}}
\item Fitting height does not increase on taking subgroups or quotients.
For $N\trianglelefteq G$,
\[
 h(G)\le h(N)+h(G/N).
\]
The height of a direct product is the maximum of the heights of its factors.
\item Fitting height is unchanged on factoring out the Frattini subgroup:
\[
 h(G/\Phi(G))=h(G).
\]
\item If $G$ is soluble, then $C_G(\Fit(G))\le\Fit(G)$.
\item (Burnside) A finite $q$-group containing no subgroup isomorphic to
$C_q\times C_q$ is cyclic when $q$ is odd, and cyclic or generalized
quaternion when $q=2$. Conversely, cyclic and generalized quaternion
groups have a unique subgroup of prime order.
\end{enumerate}
In particular, all abelian subgroups of a nilpotent group $D$ are
cyclic if and only if $D$ has no subgroup $C_q\times C_q$ for any
prime $q$, if and only if the Sylow subgroups of $D$ are cyclic or
generalized quaternion.

\begin{lemma}[Height is detected by primitive quotients]
\label{lem:primitive-quotients}
Let $G\ne1$ be soluble. For a maximal subgroup $L$ of $G$, the
quotient $Y=G/\Core_G(L)$ has the form $Y=V\rtimes J$, where $V$ is
an elementary abelian minimal normal subgroup of $Y$ and
$J=L/\Core_G(L)$ acts faithfully and irreducibly on $V$. Moreover,
\begin{equation}\label{eq:primitive-max}
h(G)=\max_{L\text{ maximal in }G}h\bigl(G/\Core_G(L)\bigr).
\end{equation}
\end{lemma}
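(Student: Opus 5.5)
The proof has two parts: the structure of the primitive quotient, and the equality \eqref{eq:primitive-max}.

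\emph{Structure of $Y$.} Put $N=\Core_G(L)$, $Y=G/N$ and $J=L/N$. Then $J$ is a core-free maximal subgroup of $Y$, and $Y\ne1$ because $L<G$. Let $V/N$ be a minimal normal subgroup of $Y$. Since $Y$ is soluble, $V/N$ is elementary abelian. Core-freeness gives $V\not\le L$, so maximality gives $Y=VJ$.

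Next, $V\cap L$ is normalized by $L$, and by $V$ because $V/N$ is abelian, so $V\cap L\trianglelefteq G$. Hence $V\cap L\le N$, and $V/N\cap J=1$, so $Y=(V/N)\rtimes J$.

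Irreducibility follows from minimality of $V/N$, since $J$-invariant subgroups of the abelian group $V/N$ are normal in $Y$. Faithfulness follows because $C_J(V/N)$ is normalized by $J$ and centralized by $V/N$, hence normal in $Y$. Being core-free in $Y$, it must be trivial.

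\emph{The equality \eqref{eq:primitive-max}.} The inequality $\ge$ is immediate from (S1), since each $Y$ is a quotient of $G$.

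For $\le$ we argue by induction on $|G|$, with $h(G)=b$. Using (S2), we may pass to $G/\Phi(G)$: its maximal subgroups are the $L/\Phi(G)$ with cores $\Core_G(L)/\Phi(G)$, so the quotients $Y$ do not change. We may therefore assume $\Phi(G)=1$.

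Let $K=\Fit_{b-1}(G)$, so that $h(G/K)=1$. We need a primitive quotient of height $b$.

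\emph{Main case.} Since $\Phi(G)=1$, $\Fit(G)$ is a product of minimal normal subgroups $V_1\times\cdots\times V_k$, and each has a complement. Then $\bigcap_i C_G(V_i)=C_G(\Fit(G))=\Fit(G)$ by (S3). So $G/\Fit(G)$ embeds in $\prod_i G/C_G(V_i)$, and by (S1) some $i$ has
\[
h\bigl(G/C_G(V_i)\bigr)\ge h(G/\Fit(G))=b-1.
\]
Choose a complement $L_i$ to $V_i$ that contains $C_G(V_i)$ modulo $V_i$. Concretely, take $L_i$ maximal with $G=V_iL_i$; then $\Core_G(L_i)=C_G(V_i)\cap L_i$.

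The resulting primitive quotient is $V_i\rtimes G/C_G(V_i)$. Its height is $1+h(G/C_G(V_i))$, because the faithful irreducible action forces $O_p$ of the point stabilizer to be trivial. This gives height at least $b$.

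\emph{Main obstacle.} The delicate step is verifying these two facts in the Frattini-free setting: that the core of $L_i$ is exactly $C_G(V_i)\cap L_i$, and that the primitive quotient with faithful irreducible stabilizer $J$ satisfies $\Fit(Y)=V$, so that $h(Y)=1+h(J)$. For the second fact I would show that $\Fit(Y)$ centralizes the abelian minimal normal $V$. Then $\Fit(Y)=V\times(\Fit(Y)\cap J)$, and the second factor centralizes $V$ and is therefore trivial by faithfulness. Together these give the recursion closing the induction.
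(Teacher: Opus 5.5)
Your proof is correct. The structural part matches the paper's argument, but for the equality \eqref{eq:primitive-max} you take a genuinely different route.

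The paper observes only that the cores of the maximal subgroups intersect in $\Phi(G)$. Hence $G/\Phi(G)$ is a subdirect product of the primitive quotients $G/\Core_G(L)$, and (S1), (S2) finish in one line.

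You instead pass to $\Phi(G)=1$ and invoke Gasch\"utz's theorem that $\Fit(G)$ is then a direct product of minimal normal subgroups $V_i$. With (S3) you embed $G/\Fit(G)$ subdirectly in $\prod_i G/C_G(V_i)$, choose $i$ with $h(G/C_G(V_i))\ge h(G)-1$, and exhibit one explicit primitive quotient. It comes from a maximal subgroup $L_i$ with $V_i\not\le L_i$, and it is $V_i\rtimes G/C_G(V_i)$. Your computations $\Core_G(L_i)=C_G(V_i)\cap L_i$ and $h(Y)=1+h(J)$ via $\Fit(Y)=V$ are sound. For the latter, the missing one-liner is that $V\cap Z(\Fit(Y))$ is a nontrivial normal subgroup of $Y$, hence equals $V$.

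What your route buys is extra information. It shows the maximum is attained at a complement of a minimal normal subgroup $V_i\le\Fit(G/\Phi(G))$ with $h(G/C_G(V_i))=h(G)-1$. It also proves, along the way, the relation $h(Y)=1+h(J)$, which the paper establishes separately in Lemma~\ref{lem:affine-fitting}. The cost is that it relies on Gasch\"utz's structure theorem for Frattini-free groups, which is standard but not among (S1)--(S4). The paper's argument uses only (S1), (S2) and the description of $\Phi(G)$ as the intersection of the maximal subgroups.

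Minor points: the announced induction on $|G|$ and the subgroup $\Fit_{b-1}(G)$ are never used, and there is no case other than your \emph{main case}, so these can be deleted.
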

\begin{proof}
Put $K=\Core_G(L)$ and $J=L/K$, a core-free maximal subgroup of
$Y=G/K$. Choose a minimal normal subgroup $V$ of $Y$; it is
elementary abelian because $Y$ is soluble. Core-freeness implies
$V\not\le J$, so $Y=VJ$ by maximality. The subgroup $V\cap J$ is
normalized by $J$ and centralized by $V$, and hence is normal in $Y$;
thus it is trivial. Also $C_J(V)$ is normalized by $J$ and
centralized by $V$, and is therefore normal in $Y$ and contained in
$J$; hence $C_J(V)=1$. Finally, every $J$-invariant subspace of $V$
is normal in $Y$, so the action is irreducible.

The intersection of the cores of all maximal subgroups is $\Phi(G)$:
it is contained in the intersection of the maximal subgroups, while
the normal subgroup $\Phi(G)$ is contained in every core. Therefore
$G/\Phi(G)$ embeds as a subdirect product of the quotients
$G/\Core_G(L)$, and \eqref{eq:primitive-max} follows from (S1) and (S2).
\end{proof}

The proofs in Sections~\ref{sec:structure}--\ref{sec:extremal} do not
use the classification of finite simple groups.

\section{The subgroup $\mathrm W(G)$ and its upper series}
\label{sec:structure}
This section develops the basic theory of $\W(G)$. Theorem~\ref{thm:structure}
shows that $\W(G)$ is a characteristic $\W$-group containing the
generalized Fitting subgroup. Theorem~\ref{thm:functor} shows that the
construction is compatible with subnormal subgroups and quotients.
Theorem~\ref{thm:general-height} shows that the upper $\W$-series gives
subnormal chains for the residuals and is bounded by the Fitting
height. These facts are used throughout the rest of the paper.

\begin{theorem}\label{thm:structure}
Let $X=\W(G)$. Then:
\begin{enumerate}
\renewcommand{\labelenumi}{\textup{(\roman{enumi})}}
\item $X$ is characteristic in $G$, and
$\Fit^*(G)\le X$ and $\omega(G)\le X$.
\item $C_G(X)=Z(X)$.
\item $\Fit(X)=\Fit(G)$, $\Layer(X)=\Layer(G)$, and
$\Fit^*(X)=\Fit^*(G)$.
\item $\W(X)=X$.
\end{enumerate}
\end{theorem}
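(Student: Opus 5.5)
The plan is to take the four parts in order, each resting on the previous ones.

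For (i), an automorphism $\alpha$ of $G$ permutes the subnormal subgroups of $G$ and satisfies $(H^\alpha)^{\N}=(H^{\N})^\alpha$. Hence it permutes the normalizers $N_G(H^{\N})$, and so fixes their intersection $X$. The inclusion $\Fit^*(G)\le X$ is \eqref{eq:Fstar} of Lemma~\ref{lem:Fstar}. For $\omega(G)\le X$: any element normalizing $H$ normalizes its characteristic subgroup $H^{\N}$.

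For (ii), one inclusion is clear, since $Z(X)\le C_G(X)$. Conversely, $C_G(X)\le C_G(\Fit^*(G))$ because $\Fit^*(G)\le X$. The self-centralizing property then gives $C_G(X)\le\Fit^*(G)\le X$, so $C_G(X)=C_X(X)=Z(X)$.

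For (iii), $X$ is normal in $G$, so $\Fit(X)$ is a normal nilpotent subgroup of $G$. Thus $\Fit(X)\le\Fit(G)$, and conversely $\Fit(G)\le X$ is normal and nilpotent, hence lies in $\Fit(X)$. For the layer, components of $X$ are subnormal in $G$ and so are components of $G$. Conversely, every component of $G$ lies in $\Layer(G)\le X$, and a subnormal quasisimple subgroup of $G$ contained in $X$ is subnormal in $X$, hence a component of $X$. Therefore $\Layer(X)=\Layer(G)$, and the product gives $\Fit^*(X)=\Fit^*(G)$.

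Part (iv) is the main point. We need every $K\sn X$ to have $K^{\N}$ normal in $X$. Since $X\trianglelefteq G$, any $K\sn X$ is also subnormal in $G$. By the definition of $X=\W(G)$, every element of $X$ normalizes $K^{\N}$, so $K^{\N}\trianglelefteq X$. Hence $\W(X)=X$. Nothing here is delicate; the only step needing care is (iii), where one must cite correctly that subnormal quasisimple subgroups of a normal subgroup are exactly the components of $G$ lying in it.
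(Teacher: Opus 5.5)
Your proposal is correct and follows the paper's proof essentially step by step. You get characteristicity from automorphisms permuting the normalizers $N_G(H^{\N})$, part (ii) from $C_G(X)\le C_G(\Fit^*(G))\le\Fit^*(G)\le X$, part (iii) from characteristic subgroups and components, and part (iv) from the fact that subnormal subgroups of the normal subgroup $X$ are subnormal in $G$.
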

\begin{proof}
Automorphisms permute subnormal subgroups and map $H^{\N}$ to
$(H^\alpha)^{\N}$. They therefore permute the normalizers defining
$X$, proving characteristicity. The containment of $\Fit^*(G)$
is Lemma~\ref{lem:Fstar}.
If $x$ normalizes $H$, it normalizes the characteristic subgroup
$H^{\N}$, so $\omega(G)\le X$. Since $\Fit^*(G)\le X$,
\[
 C_G(X)\le C_G(\Fit^*(G))\le\Fit^*(G)\le X,
\]
whence $C_G(X)=Z(X)$.

The subgroup $\Fit(X)$ is characteristic in $X$ and hence normal in
$G$, so $\Fit(X)\le\Fit(G)$. Conversely, $\Fit(G)\le X$ is
nilpotent and normal in $X$. This proves equality.
A component of $X$ is quasisimple and subnormal in $G$, and is
therefore a component of $G$. Conversely, each component of $G$ lies
in $\Layer(G)\le X$; intersecting a subnormal chain with $X$ shows
that it is subnormal in $X$. Thus $\Layer(X)=\Layer(G)$, and the
equality of generalized Fitting subgroups follows.

If $H\sn X$, then $H\sn G$ because $X\trianglelefteq G$.
By definition $X$ normalizes $H^{\N}$. Hence $\W(X)=X$.
\end{proof}

Each $\W_i(G)$ is characteristic in $G$. If $\W_i(G)<G$, the
nontrivial group $G/\W_i(G)$ has a nontrivial generalized Fitting
subgroup, so $\W_i(G)<\W_{i+1}(G)$ by Theorem~\ref{thm:structure}(i).
Thus the upper $\W$-series strictly increases until it reaches $G$,
and $\ellW(G)$ is at most the number of prime factors of $|G|$,
counted with multiplicity.

\begin{theorem}\label{thm:functor}
If $K\sn G$ and $N\trianglelefteq G$, then
\[
 K\cap\W(G)\le\W(K),\qquad
 \W(G)N/N\le\W(G/N).
\]
Equivalently, every epimorphism $f:G\twoheadrightarrow Q$ satisfies
$f(\W(G))\le\W(Q)$. In particular, subnormal subgroups and quotients
of $\W$-groups are $\W$-groups.
\end{theorem}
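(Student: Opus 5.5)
The two inclusions can be proved directly from the definitions. Both arguments hinge on one fact: subnormal subgroups of $K$ and of $G/N$ come from subnormal subgroups of $G$, and nilpotent residuals behave well under inclusion and quotients by Lemma~\ref{lem:residual-basics}.

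\emph{Subnormal subgroups.} Let $K\sn G$, and let $x\in K\cap\W(G)$. Take any $H\sn K$. Since subnormality is transitive, $H\sn G$. Because $x\in\W(G)$, it normalizes $H^{\N}$. This holds for every $H\sn K$, and $x\in K$, so $x\in\bigcap_{H\sn K}N_K(H^{\N})=\W(K)$.

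\emph{Quotients.} Let $N\trianglelefteq G$, and take $\bar H\sn G/N$. Write $\bar H=H/N$ with $N\le H\le G$. Pulling back a subnormal chain of $\bar H$ in $G/N$ gives a subnormal chain from $H$ to $G$, so $H\sn G$. Lemma~\ref{lem:residual-basics}(ii), applied to the epimorphism $H\to H/N$, gives
\[
(H/N)^{\N}=H^{\N}N/N .
\]
Now let $x\in\W(G)$. It normalizes $H^{\N}$, and it normalizes $N$, so it normalizes $H^{\N}N$. Hence $xN$ normalizes $(H/N)^{\N}$. Intersecting over all $\bar H\sn G/N$ gives $\W(G)N/N\le\W(G/N)$.

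\emph{The epimorphism form.} The statement $f(\W(G))\le\W(Q)$ for an epimorphism $f$ is just the quotient statement with $N=\ker f$, transported along the isomorphism $G/N\cong Q$. Conversely, the quotient statement is the case where $f$ is the natural map. Automorphism invariance, as in the proof of Theorem~\ref{thm:structure}(i), handles the identification.

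\emph{Consequences for $\W$-groups.} Suppose $\W(G)=G$. Then $K=K\cap\W(G)\le\W(K)$, so $\W(K)=K$. Likewise $G/N=\W(G)N/N\le\W(G/N)$, so $\W(G/N)=G/N$.

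I do not expect any real obstacle. The only point to check is that the preimage of a subnormal subgroup of $G/N$ is subnormal in $G$, which is exactly what the correspondence of chains above provides. One also needs Lemma~\ref{lem:residual-basics}(ii) in the form where the residual of $H/N$ is the image of $H^{\N}$.
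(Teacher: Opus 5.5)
Your proposal is correct and follows essentially the same route as the paper: transitivity of subnormality for the first inclusion, and for the second, lifting $\bar H\sn G/N$ to $H\sn G$ and using $(H/N)^{\N}=H^{\N}N/N$ from Lemma~\ref{lem:residual-basics}(ii), with the epimorphism form obtained by factoring through the kernel. The only nuance is that the identification $G/\ker f\cong Q$ uses invariance of $\W$ under isomorphisms rather than automorphisms, which holds by the same argument.
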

\begin{proof}
Every subnormal subgroup of $K$ is subnormal in $G$, giving the first
containment. For the second, lift $L/N\sn G/N$ to $L\sn G$.
By Lemma~\ref{lem:residual-basics}, $(L/N)^{\N}=L^{\N}N/N$.
Every element of $\W(G)$ normalizes $L^{\N}$ and hence normalizes
this image. Taking intersections proves the assertion. The epimorphism
form follows by factoring through the kernel.
\end{proof}

\begin{lemma}
\label{lem:residual-product}
Suppose $H\sn A\times B$, and put
$A_0=\pi_A(H)$ and $B_0=\pi_B(H)$. Then
\[
H^{\N}=A_0^{\N}\times B_0^{\N}.
\]
\end{lemma}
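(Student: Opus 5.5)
The plan is to prove the two inclusions separately, after first placing $A_0^{\N}\times B_0^{\N}$ inside $H$. Put $G=A\times B$ and $K=A_0^{\N}\times B_0^{\N}$. Clearly $H\le A_0\times B_0$. The one computational fact used throughout is this: for $a\in A$ and $h=h_Ah_B\in H$ with $h_A\in A_0$ and $h_B\in B_0$, the factor $h_B$ centralizes $A$, so
\[
[a,h]=[a,h_A].
\]
Hence commutators of elements of $A$ with $H$ coincide with commutators with $A_0$. The same holds for $B$.

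\emph{Step 1: $K\le H$.} Since $H\sn G$, intersecting a subnormal chain from $H$ to $G$ with $A_0\times B_0$ gives $H\sn A_0\times B_0$, of some defect $d$. By Lemma~\ref{lem:residual-basics}(iii), $[A_0\times B_0,{}_dH]\le H$, and in particular $[A_0,{}_dH]\le H$. By the remark above, $[A_0,{}_dH]=[A_0,{}_dA_0]=\gamma_{d+1}(A_0)$. This contains $A_0^{\N}=\gamma_\infty(A_0)$, so $A_0^{\N}\le H$. Symmetrically $B_0^{\N}\le H$, and therefore $K\le H$.

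\emph{Step 2: $H^{\N}\le K$.} The subgroup $K$ is normal in $A_0\times B_0$, hence normal in $H$. The quotient $H/K$ embeds in $(A_0/A_0^{\N})\times(B_0/B_0^{\N})$, which is nilpotent, so $H^{\N}\le K$.

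\emph{Step 3: $K\le H^{\N}$.} By \eqref{eq:stable}, $[A_0^{\N},A_0]=A_0^{\N}$. Since $A_0^{\N}\le A$, the commutator remark gives $[A_0^{\N},H]=[A_0^{\N},A_0]=A_0^{\N}$. Iterating, and using $A_0^{\N}\le H$ from Step 1, we get $A_0^{\N}=[A_0^{\N},{}_nH]\le\gamma_{n+1}(H)$ for every $n$. Hence $A_0^{\N}\le\gamma_\infty(H)=H^{\N}$. Likewise $B_0^{\N}\le H^{\N}$, which gives $K\le H^{\N}$ and, with Step 2, the equality $H^{\N}=K$.

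The only delicate point is Step 1, namely the use of subnormality to obtain $A_0^{\N}\le H$. Without subnormality this fails: a diagonal subgroup of $S_3\times S_3$ does not contain $C_3\times C_3$. It is handled by Lemma~\ref{lem:residual-basics}(iii) together with the reduction of commutators to the $A$-coordinate. One should not try instead to argue via idempotence of the nilpotent residual, since $(X^{\N})^{\N}\ne X^{\N}$ in general (for example $X=S_4$).
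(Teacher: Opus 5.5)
Your proof is correct and follows the paper's argument essentially step for step. Like the paper, you use subnormality in $A_0\times B_0$, Lemma~\ref{lem:residual-basics}(iii) and the coordinatewise commutator reduction to get $A_0^{\N}\le H$, and then the identity $[A_0^{\N},H]=A_0^{\N}$ to place $A_0^{\N}$ inside $H^{\N}$. The only cosmetic difference is your Step~2: you show directly that $H/K$ embeds in a nilpotent group, whereas the paper writes $H^{\N}\le(A_0\times B_0)^{\N}=A_0^{\N}\times B_0^{\N}$ using Lemma~\ref{lem:residual-basics}(i).
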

\begin{proof}
Set $Y=A_0\times B_0$. Since $H\le Y$, intersecting a subnormal
chain with $Y$ shows that $H\sn Y$. Choose
$d\ge\max\{1,\Def_Y(H)\}$.
By Lemma~\ref{lem:residual-basics}(iii), $[Y,{}_dH]\le H$.
Because $H$ projects onto $A_0$, coordinatewise commutator calculation
gives
\[
 [A_0\times1,{}_dH]=\gamma_{d+1}(A_0)\times1\le H.
\]
In particular $R=A_0^{\N}\times1\le H$. By
\eqref{eq:stable} and the same projection argument,
\[
 [R,H]=[A_0^{\N},A_0]\times1=R.
\]
Starting with $R\le H$, induction gives $R\le\gamma_j(H)$ for
every $j$, and therefore $R\le H^{\N}$.
Similarly $1\times B_0^{\N}\le H^{\N}$.

Conversely, $H\le Y$ and Lemma~\ref{lem:residual-basics}(i) give
$H^{\N}\le Y^{\N}$.
The lower central series of a direct product is computed componentwise,
so $Y^{\N}=A_0^{\N}\times B_0^{\N}$. Both containments now follow.
\end{proof}

\begin{theorem}\label{thm:W-product}
For any finite groups $A$ and $B$,
\[
\W(A\times B)=\W(A)\times\W(B).
\]
\end{theorem}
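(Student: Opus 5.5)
We prove the two inclusions separately, using Lemma~\ref{lem:residual-product} as the main tool. Put $G=A\times B$. Subnormal subgroups of $A$ and of $B$, embedded as $H_1\times1$ and $1\times H_2$, are subnormal in $G$. Conversely, every $H\sn G$ has subnormal projections $A_0=\pi_A(H)\sn A$ and $B_0=\pi_B(H)\sn B$: the projection of a subnormal chain from $H$ to $G$ is a subnormal chain in each factor.

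\emph{The inclusion $\W(A)\times\W(B)\le\W(G)$.} Take $(a,b)$ with $a\in\W(A)$ and $b\in\W(B)$, and let $H\sn G$. By Lemma~\ref{lem:residual-product},
\[
H^{\N}=A_0^{\N}\times B_0^{\N}.
\]
Since $A_0\sn A$, the element $a$ normalizes $A_0^{\N}$. Likewise $b$ normalizes $B_0^{\N}$. Hence $(a,b)$ normalizes $H^{\N}$, and since $H$ was arbitrary, $(a,b)\in\W(G)$.

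\emph{The inclusion $\W(G)\le\W(A)\times\W(B)$.} Let $g=(a,b)\in\W(G)$, and let $H_1\sn A$. Then $H_1\times1\sn G$, and its residual is $H_1^{\N}\times1$. Conjugation by $g$ acts on $A\times1$ through $a$ alone, since the $B$-coordinate commutes with $A\times 1$. Thus $g$ normalizes $H_1^{\N}\times1$ exactly when $a$ normalizes $H_1^{\N}$. Therefore $a\in\W(A)$, and symmetrically $b\in\W(B)$. Note that $\W(G)$ need not be a product of subgroups a priori, but this argument handles each element coordinatewise, so no such assumption is needed.

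There is no real obstacle, because all the substance lies in Lemma~\ref{lem:residual-product}. The only points to check are that the projections $A_0$ and $B_0$ are indeed subnormal in $A$ and $B$, and that the residual of a direct product of subgroups is the product of the residuals. The latter also follows from Lemma~\ref{lem:residual-product} applied to $H_1\times1$, whose projections are $H_1$ and $1$.
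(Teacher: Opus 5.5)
Your proposal is correct and follows the paper's proof essentially verbatim: one inclusion comes from the subnormality of the projections $A_0,B_0$ together with Lemma~\ref{lem:residual-product}, and the other from testing $\W(A\times B)$ on the subnormal subgroups $H_1\times1$ and $1\times H_2$. Your coordinatewise treatment of an element $(a,b)$ in the reverse inclusion is a faithful expansion of the paper's one-line ``apply the definition'' step.
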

\begin{proof}
If $H\sn A\times B$, its projections $A_0$ and $B_0$ are
subnormal in their respective factors. Thus $\W(A)$ normalizes
$A_0^{\N}$ and $\W(B)$ normalizes $B_0^{\N}$.
Lemma~\ref{lem:residual-product} shows that $\W(A)\times\W(B)$ normalizes
$H^{\N}$. Taking intersections gives one containment.
For the reverse containment, apply the definition to every subgroup
$K\times1$, $K\sn A$, and every subgroup $1\times L$, $L\sn B$.
\end{proof}

\begin{lemma}\label{lem:series-shift}
For $i,j\ge0$,
\[
 \W_j(G/\W_i(G))=\W_{i+j}(G)/\W_i(G).
\]
Consequently, $\ellW(G/\W_i(G))=\ellW(G)-i$ for
$0\le i\le\ellW(G)$. Moreover, every epimorphism
$f:G\twoheadrightarrow Q$ satisfies $f(\W_i(G))\le\W_i(Q)$ for all
$i$, and hence $\ellW(Q)\le\ellW(G)$.
\end{lemma}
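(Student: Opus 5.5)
The plan is to prove the shift identity by induction on $j$, then read off the other two assertions from it and from Theorem~\ref{thm:functor}. Write $\bar G=G/\W_i(G)$.

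For $j=0$ both sides are trivial. Assume $\W_j(\bar G)=\W_{i+j}(G)/\W_i(G)$. By definition $\W_{j+1}(\bar G)/\W_j(\bar G)=\W(\bar G/\W_j(\bar G))$. The third isomorphism theorem identifies $\bar G/\W_j(\bar G)$ with $G/\W_{i+j}(G)$ via the canonical map. Under this identification $\W$ of one group corresponds to $\W$ of the other, since $\W$ is defined intrinsically and is preserved by isomorphisms (the argument for characteristicity in Theorem~\ref{thm:structure}(i)). Hence $\W_{j+1}(\bar G)$ is the preimage in $\bar G$ of $\W_{i+j+1}(G)/\W_{i+j}(G)$, which is $\W_{i+j+1}(G)/\W_i(G)$. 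This completes the induction. For the length statement with $0\le i\le\ellW(G)$: $\W_j(\bar G)=\bar G$ iff $\W_{i+j}(G)=G$ iff $i+j\ge\ellW(G)$, so the least such $j$ is $\ellW(G)-i$.

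For monotonicity under an epimorphism $f:G\twoheadrightarrow Q$, I would induct on $i$, with $i=0$ trivial. Suppose $f(\W_i(G))\le\W_i(Q)$. Compose $f$ with $Q\to Q/\W_i(Q)$; this kills $\W_i(G)$, so it factors through an epimorphism $\bar f:G/\W_i(G)\twoheadrightarrow Q/\W_i(Q)$. Applying the epimorphism form of Theorem~\ref{thm:functor} to $\bar f$ gives $\bar f(\W(G/\W_i(G)))\le\W(Q/\W_i(Q))$. Taking preimages, this says $f(\W_{i+1}(G))\W_i(Q)\le\W_{i+1}(Q)$, hence $f(\W_{i+1}(G))\le\W_{i+1}(Q)$. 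Finally, taking $i=\ellW(G)$ gives $Q=f(G)\le\W_{\ellW(G)}(Q)$, so $\ellW(Q)\le\ellW(G)$.

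No step is a genuine obstacle. The only point needing care is the induction step of the monotonicity: it works because the factorization $\bar f$ exists, and that existence is exactly the inductive hypothesis.
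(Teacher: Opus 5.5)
Your proof is correct and follows the paper's argument. You prove the shift identity by induction on $j$ from the defining recursion, read off the length identity from it, and prove monotonicity by induction on $i$, applying Theorem~\ref{thm:functor} to the induced epimorphisms $G/\W_i(G)\twoheadrightarrow Q/\W_i(Q)$; you just spell out the details the paper leaves implicit, namely the third-isomorphism identification and the existence of the factorization $\bar f$.
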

\begin{proof}
The subgroup identity follows by induction on $j$ from the defining
recursion. The length identity follows from strict increase before
termination. The last assertion follows by induction on $i$, applying
Theorem~\ref{thm:functor} to the induced epimorphisms
$G/\W_i(G)\twoheadrightarrow Q/\W_i(Q)$.
\end{proof}

\begin{theorem}
\label{thm:general-height}
Let $n=\ellW(G)$. For every $H\sn G$, with $R=H^{\N}$,
\[
 R\trianglelefteq R\W_1(G)\trianglelefteq\cdots
 \trianglelefteq R\W_n(G)=G.
\]
Moreover, $\Fit_i^*(G)\le\W_i(G)$ and $\omega_i(G)\le\W_i(G)$ for every
$i\ge0$. Consequently,
\[
\bN(G)\le\ellW(G)\le\min\{h^*(G),\wl(G)\},
\]
and $\bN(G)\le\ellW(G)\le h(G)$ if $G$ is soluble.
\end{theorem}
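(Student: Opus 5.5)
We prove the four assertions in turn: the subnormal chain, the two containments of upper series, and then the inequalities, which follow from these.

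\emph{The chain.} Fix $H\sn G$ and put $R=H^{\N}$. For $0\le i<n$ we must show $R\W_i(G)\trianglelefteq R\W_{i+1}(G)$. Work in $\bar G=G/\W_i(G)$. By Lemma~\ref{lem:series-shift}, $\W_{i+1}(G)/\W_i(G)=\W(\bar G)$. The image $\bar H$ is subnormal in $\bar G$, and by Lemma~\ref{lem:residual-basics}(ii) its residual is $\bar R=R\W_i(G)/\W_i(G)$. By the definition of $\W(\bar G)$, the group $\W(\bar G)$ normalizes $\bar R$. Hence $\bar R$ is normal in $\bar R\,\W(\bar G)$, and pulling back gives $R\W_i(G)\trianglelefteq R\W_{i+1}(G)$. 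For $i=0$ we have $R\W_0(G)=R$, and at the top $R\W_n(G)=G$. The chain has length $n$, so $\Def_G(H^{\N})\le n$. Maximizing over $H\sn G$ gives $\bN(G)\le\ellW(G)$.

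\emph{The containments.} We use induction on $i$, the case $i=0$ being trivial. Assume $\Fit_i^*(G)\le\W_i(G)$. Then $\Fit_{i+1}^*(G)\W_i(G)/\W_i(G)$ is an image of $\Fit^*_{i+1}(G)/\Fit^*_i(G)=\Fit^*(G/\Fit^*_i(G))$ under the natural epimorphism $G/\Fit_i^*(G)\to G/\W_i(G)$.

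The main point, and the one step needing care, is that the generalized Fitting subgroup of a quotient contains the image of the generalized Fitting subgroup. For $\Fit$ this holds because normal nilpotent subgroups map to normal nilpotent subgroups. For $\Layer$, the image of a component is subnormal and perfect, and is either trivial or quasisimple modulo its centre. More cleanly, the image of $\Fit^*$ is a normal quasinilpotent subgroup of the quotient, and every normal quasinilpotent subgroup lies in the generalized Fitting subgroup.

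Granting this, $\Fit_{i+1}^*(G)\W_i(G)/\W_i(G)\le\Fit^*(G/\W_i(G))$, which lies in $\W(G/\W_i(G))=\W_{i+1}(G)/\W_i(G)$ by \eqref{eq:Fstar}. Hence $\Fit_{i+1}^*(G)\le\W_{i+1}(G)$.

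For the Wielandt series the argument is the same. By induction the image of $\omega_{i+1}(G)$ in $G/\W_i(G)$ is the image of $\omega(G/\omega_i(G))$. Any epimorphism maps the Wielandt subgroup into the Wielandt subgroup, since subnormal subgroups of the quotient lift to subnormal subgroups containing the kernel and normalizers map onto normalizers. Therefore this image lies in $\omega(G/\W_i(G))\le\W(G/\W_i(G))$ by Theorem~\ref{thm:structure}(i).

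\emph{The inequalities.} Taking $i=h^*(G)$ gives $\W_i(G)=G$, so $\ellW(G)\le h^*(G)$. Taking $i=\wl(G)$ gives $\ellW(G)\le\wl(G)$. For soluble $G$ we have $h^*(G)=h(G)$, which yields the final statement.
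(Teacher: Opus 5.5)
Your proof is correct, and your argument for the subnormal chain is the same as the paper's.

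For the containments you go around the commutative square in the opposite order. You push $\Fit^*(G/\Fit_i^*(G))$ (resp.\ $\omega(G/\omega_i(G))$) forward along $G/\Fit_i^*(G)\twoheadrightarrow G/\W_i(G)$. This uses the fact that epimorphisms map $\Fit^*$ into $\Fit^*$ (resp.\ $\omega$ into $\omega$). Only then do you apply $\Fit^*\le\W$ (resp.\ $\omega\le\W$) inside $G/\W_i(G)$.

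The paper does it the other way round. It first applies Lemma~\ref{lem:Fstar} (resp.\ Theorem~\ref{thm:structure}(i)) in $G/\Fit_i^*(G)$. It then pushes forward using the functoriality of $\W$ from Theorem~\ref{thm:functor}. That order uses only results already proved in the paper, whereas yours relies on the standard but external fact that $\Fit^*$ is mapped into $\Fit^*$ by epimorphisms.

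Your sentence about components should say that the image of a component is trivial or quasisimple. The reason is that a proper normal subgroup of a quasisimple group is central. Your quasinilpotency argument establishes the point correctly anyway.
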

\begin{proof}
The subgroup $H\W_i(G)/\W_i(G)$ is subnormal in $G/\W_i(G)$, and its
nilpotent residual is $R\W_i(G)/\W_i(G)$ by
Lemma~\ref{lem:residual-basics}. By definition,
$\W_{i+1}(G)/\W_i(G)$ normalizes this residual, so $\W_{i+1}(G)$
normalizes $R\W_i(G)$. This proves the displayed chain, and hence
$\Def_G(R)\le n$.

Suppose $\Fit_i^*(G)\le\W_i(G)$. By Theorem~\ref{thm:functor}, the
natural epimorphism $G/\Fit_i^*(G)\twoheadrightarrow G/\W_i(G)$ maps
$\W(G/\Fit_i^*(G))$ into $\W(G/\W_i(G))$, and the former contains
$\Fit^*(G/\Fit_i^*(G))$ by Lemma~\ref{lem:Fstar}. Taking inverse
images gives $\Fit_{i+1}^*(G)\le\W_{i+1}(G)$. The same argument,
with $\omega(X)\le\W(X)$ from Theorem~\ref{thm:structure}(i), gives
$\omega_i(G)\le\W_i(G)$. These inclusions give the upper bounds.
\end{proof}

In words, the upper $\W$-series plays for the residuals $H^{\N}$ the
role that the Wielandt series plays for the subnormal subgroups
themselves, and its length is at most the Fitting height. The rest of
the paper studies the reverse comparison.

\section{Soluble groups whose Fitting subgroup has cyclic abelian subgroups}
\label{sec:rank-one}
In this section $M$ is soluble and every abelian subgroup of
$D=\Fit(M)$ is cyclic. By (S4) we may write $D=C\times P$, where $C$
is cyclic of odd order and $P$ is the Sylow $2$-subgroup of $D$, which
is cyclic or generalized quaternion. We show that $M$ is a
$\W$-group and that $h(M)\le3$. These groups are the point stabilizers
of the primitive $\W$-groups (Theorem~\ref{thm:affine-criterion}), and
the bound $h(M)\le3$ is the source of the constant four. The key point
is that $M/\Fit(M)$ embeds in $\Out(\Fit(M))$, which has height at most
two, with equality only when a factor $Q_8$ occurs
(Lemmas~\ref{lem:out-quaternion} and~\ref{lem:embedding}).

\begin{lemma}\label{lem:out-quaternion}
Let $D=C\times P$ be as above. Then
\[
 \Out(D)\cong\Aut(C)\times\Out(P),
\]
the group $\Aut(C)$ is abelian, and $\Aut(P)$ is a $2$-group unless
$P\cong Q_8$. Moreover $\Aut(Q_8)\cong S_4$ and $\Out(Q_8)\cong S_3$.
In particular $h(\Out(D))\le2$, and $\Out(D)$ is nilpotent unless
$P\cong Q_8$.
\end{lemma}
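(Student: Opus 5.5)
Since $C$ and $P$ have coprime orders, both are characteristic in $D$, and hence $\Aut(D)=\Aut(C)\times\Aut(P)$. Inner automorphisms split the same way: $\Inn(D)=\Inn(C)\times\Inn(P)=1\times\Inn(P)$, because $C$ is abelian. It follows that $\Out(D)\cong\Aut(C)\times\Out(P)$. For a cyclic group $C$ we have $\Aut(C)\cong(\mathbb Z/|C|)^\times$, which is abelian.

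For $P$ we split into cases using (S4). If $P=C_{2^k}$ is cyclic, then $\Aut(P)\cong(\mathbb Z/2^k)^\times$ is a $2$-group. If $P=Q_{2^k}$ is generalized quaternion with $k\ge4$, then $P$ has a characteristic cyclic subgroup $A=\langle a\rangle$ of index $2$. Indeed, $A$ is the unique cyclic subgroup of index $2$, since the elements outside it all have order $4$ while $|A|\ge8$. Let $\alpha\in\Aut(P)$ be an automorphism of odd order. Then $\alpha$ acts on $A$ through $\Aut(A)$, a $2$-group, so $\alpha$ centralizes $A$. It also acts trivially on $P/A\cong C_2$. The automorphisms that are trivial on both $A$ and $P/A$ are given by $b\mapsto ba^i$ and form a $2$-group, so $\alpha=1$. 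Hence $\Aut(P)$ is a $2$-group. The main point to check carefully in this case is that the index-$2$ cyclic subgroup really is characteristic when $k\ge4$; this is exactly where $Q_8$ differs, since it has three cyclic subgroups of order $4$.

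For $Q_8$ I use the standard identification. An automorphism is determined by the images of $i$ and $j$. The image of $i$ can be any of the six elements of order $4$, and the image of $j$ any of the four elements of order $4$ other than $\pm i'$. This gives $|\Aut(Q_8)|=24$. The group $\Inn(Q_8)\cong Q_8/Z(Q_8)\cong C_2^2$. Permuting the three cyclic subgroups $\langle i\rangle,\langle j\rangle,\langle k\rangle$ gives a homomorphism $\Out(Q_8)\to S_3$. It is surjective: the automorphism $i\mapsto j\mapsto k$ realizes the $3$-cycle, and $i\mapsto j,\ j\mapsto i$ followed by a sign fix realizes a transposition. Since $|\Out(Q_8)|=6$, this map is an isomorphism. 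Moreover $\Aut(Q_8)$ acts faithfully on the $8$ elements arranged as four pairs $\{\pm\}$ of $\pm1$-cosets, equivalently on the four "axes" of the cube, which identifies $\Aut(Q_8)\cong S_4$. Alternatively, $\Aut(Q_8)$ is an extension of $C_2^2$ by $S_3$ with faithful action, and such an extension is $S_4$.

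Finally I deduce the height statements. If $P\not\cong Q_8$, then $\Out(P)$ is a $2$-group, so $\Out(D)$ is abelian $\times$ $2$-group, which is nilpotent and has height at most $1$. If $P\cong Q_8$, then $\Out(D)\cong\Aut(C)\times S_3$. By (S1) its height is $\max\{h(\Aut(C)),h(S_3)\}=2$, and it is not nilpotent.
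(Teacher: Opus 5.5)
Apart from one step, your proof is correct and runs parallel to the paper's. You use the same splitting $\Out(D)\cong\Aut(C)\times\Out(P)$, the same cyclic case, and obtain $\Out(Q_8)\cong S_3$ from the three cyclic subgroups of order four. You get $|\Aut(Q_8)|=24$ by counting images of generators, where the paper identifies the kernel of $\Aut(Q_8)\to S_3$ with the four inner automorphisms; the ingredients are the same.

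The genuinely different step is $Q_{2^m}$, $m\ge4$. The paper lists the maps $a\mapsto a^u$, $b\mapsto a^vb$, and needs that \emph{every} such choice is an automorphism to conclude $|\Aut(P)|=2^{m-2}\cdot2^{m-1}$. You instead show that an automorphism of odd order is trivial on the characteristic subgroup $\langle a\rangle$ and on $P/\langle a\rangle$, and that the stabilizer of this series embeds in $\langle a\rangle$. Your version needs only that an automorphism is determined by these data, and it justifies why $\langle a\rangle$ is characteristic, which the paper merely asserts. The paper's version gives the exact order of $\Aut(P)$.

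The step that does not hold up is $\Aut(Q_8)\cong S_4$. Your first description is wrong as stated. The four cosets of $Z(Q_8)$ are $\{\pm1\},\{\pm i\},\{\pm j\},\{\pm k\}$, and every automorphism fixes the first. The action on the other three factors through $\Out(Q_8)\cong S_3$, with kernel $\Inn(Q_8)\ne1$, so it is neither faithful nor onto $S_4$.

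The cube picture can be made to work, but you do not give that argument. Take the six elements of order four as face centres, so automorphisms act as signed permutations. Then observe that inverting all elements of order four is not an automorphism, so the action on the four body diagonals is faithful.

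Your alternative, that every extension of $C_2^2$ by $S_3$ with faithful action is $S_4$, is true. But it is the whole point (it says the extension splits), and you give no reason for it.

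The quickest repair is an explicit complement. The maps $\sigma\colon i\mapsto j\mapsto k\mapsto i$ and $\tau\colon i\mapsto -j,\ j\mapsto -i$ are automorphisms with $\sigma^3=\tau^2=1$ and $\tau\sigma\tau=\sigma^{-1}$, and they map onto $S_3$. Hence $\langle\sigma,\tau\rangle\cong S_3$ meets $\Inn(Q_8)$ trivially. So $\Aut(Q_8)\cong C_2^2\rtimes S_3$ with $S_3$ acting faithfully on $\Inn(Q_8)\cong Q_8/Z(Q_8)$, which is $S_4$.

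The paper's own justification here is also a single sentence. The height conclusions only need $\Out(Q_8)\cong S_3$, which you establish correctly.
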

\begin{proof}
The factors $C$ and $P$ are characteristic of coprime orders, which
gives the decomposition, and $\Aut(C)$ is abelian because $C$ is
cyclic. If $P$ is cyclic, $\Aut(P)$ is a $2$-group. Let
$P=Q_{2^m}$ be generalized quaternion of order $2^m$, $m\ge3$.

If $m=3$, the action on the three cyclic subgroups of order four gives
a homomorphism $\Aut(Q_8)\to S_3$. It is onto, and its kernel consists
of the four automorphisms $i\mapsto\pm i$, $j\mapsto\pm j$, which are
exactly the inner automorphisms. Hence $|\Aut(Q_8)|=24$,
$\Out(Q_8)\cong S_3$, and the action on the six elements of order
four identifies $\Aut(Q_8)$ with $S_4$.

If $m\ge4$, write
\[
 P=\langle a,b\mid a^{2^{m-1}}=1,\ b^2=a^{2^{m-2}},\ a^b=a^{-1}\rangle.
\]
The subgroup $\langle a\rangle$ is the unique cyclic subgroup of index
two. An automorphism is determined by
$a\mapsto a^u$, $b\mapsto a^v b$, with $u$ odd; all these choices
give automorphisms. Their number is $2^{m-2}2^{m-1}$, so $\Aut(P)$
is a $2$-group.

Consequently $\Out(D)$ is the direct product of an abelian group and
a $2$-group, and is nilpotent, unless $P\cong Q_8$; in that case it is
the direct product of an abelian group and $S_3$. In both cases
$h(\Out(D))\le2$.
\end{proof}

\begin{lemma}\label{lem:embedding}
Let $M$ and $D=C\times P$ be as above, and put $E=M/D$. Conjugation
induces an embedding
\[
 E\hookrightarrow\Aut(C)\times\Out(P).
\]
If $P\not\cong Q_8$, or if $P\cong Q_8$ and the induced homomorphism
$\theta:E\to\Out(P)\cong S_3$ is not surjective, then $E$ is
nilpotent. If $P\cong Q_8$ and $\theta$ is surjective, then
\[
 h(E)=2,\quad E'\cong C_3,
 \quad \Fit(E)=\theta^{-1}(A_3),\quad |E:\Fit(E)|=2,
\]
$\Fit(E)$ is abelian, and every element of $E\setminus\Fit(E)$ inverts
$E'$.
\end{lemma}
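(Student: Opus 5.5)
The plan has three steps: first obtain the embedding from the self-centralizing property, then use the structure of $\Aut(C)$ and $\Out(P)$ from Lemma~\ref{lem:out-quaternion}, and finally analyse the case where $\theta$ is onto $S_3$.

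\emph{The embedding.} Conjugation gives a homomorphism $M\to\Aut(D)$ with kernel $C_M(D)$. By (S3), $C_M(D)\le D$, so $C_M(D)=Z(D)$. Composing with $\Aut(D)\to\Out(D)$ gives a map $M\to\Out(D)$ whose kernel is $D\,C_M(D)=D$, since $\Inn(D)$ is the image of $D$. Hence $E=M/D$ embeds in $\Out(D)$, and $\Out(D)\cong\Aut(C)\times\Out(P)$ by Lemma~\ref{lem:out-quaternion}.

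\emph{The nilpotent cases.} Write $E\le\Aut(C)\times\Out(P)$, and let $\alpha$ and $\theta$ be the two coordinate projections. If $P\not\cong Q_8$, then $\Out(P)$ is a $2$-group and $\Aut(C)$ is abelian, so $E$ is nilpotent. Now suppose $P\cong Q_8$ and $\theta(E)$ is a proper subgroup of $S_3$. Then $\theta(E)$ is abelian, so $E$ embeds in the abelian group $\Aut(C)\times\theta(E)$ and is again nilpotent.

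\emph{The surjective case.} This is the only step needing care. Put $F=\theta^{-1}(A_3)$, which has index $2$ in $E$. Then $F$ embeds in $\Aut(C)\times A_3$, which is abelian, so $F$ is abelian. Since $F$ is normal and nilpotent, $F\le\Fit(E)$. We have $\Fit(E)\ne E$, because $E$ maps onto $S_3$, which is not nilpotent. As $|E:F|=2$, this forces $\Fit(E)=F$, and then $h(E)=2$.

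For the derived subgroup, $E'$ lies in $\ker\alpha$ because $\Aut(C)$ is abelian. Hence $E'$ embeds via $\theta$ into $S_3'=A_3$. It maps onto $A_3$ because $\theta(E')=\theta(E)'=S_3'$. Thus $E'\cong C_3$, and $\theta$ restricted to $E'$ is an isomorphism onto $A_3$.

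Finally take $x\in E\setminus F$ and $y\in E'$. Then $\theta(x)$ is a transposition, and it inverts $\theta(y)\in A_3$. Since $\theta$ is injective on $E'$ and $E'$ is normal in $E$, we get $y^x=y^{-1}$. So every element of $E\setminus\Fit(E)$ inverts $E'$.
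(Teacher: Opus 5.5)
Your proof is correct and follows the paper's argument. You obtain the embedding from $C_M(D)\le D$, get nilpotency from the structure of $\Aut(C)\times\Out(P)$, and in the surjective case analyse $E'$ and $\theta^{-1}(A_3)$ inside $\Aut(C)\times S_3$, exactly as the paper does. The only difference is minor: you get $\Fit(E)=\theta^{-1}(A_3)$ directly from the index being two and $E$ being nonnilpotent, whereas the paper excludes elements outside $\theta^{-1}(A_3)$ from $\Fit(E)$ because they invert the normal subgroup $E'$ of order three, which would have to be central in $\Fit(E)$. Your route is slightly shorter.
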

\begin{proof}
By (S3), $C_M(D)\le D$. An element of $M$ induces an inner
automorphism of $D$ exactly when it lies in $DC_M(D)=D$, so
conjugation embeds $E$ in $\Out(D)$, and Lemma~\ref{lem:out-quaternion}
gives the displayed embedding, whose second coordinate is $\theta$.
If $P\not\cong Q_8$, the target is nilpotent. If $P\cong Q_8$ and
$\theta(E)$ is a proper, hence cyclic, subgroup of $S_3$, then $E$
embeds in the abelian group $\Aut(C)\times\theta(E)$.

Assume that $P\cong Q_8$ and $\theta$ is onto, and identify $E$ with
its image. The first coordinate group is abelian, so
$E'\le1\times A_3$; surjectivity gives $\theta(E')=S_3'=A_3$, and
therefore $E'=1\times A_3\cong C_3$. The subgroup
$E_0=\theta^{-1}(A_3)$ has index two and lies in the abelian group
$\Aut(C)\times A_3$, so $E_0\le\Fit(E)$. An element $x\in
E\setminus E_0$ has image a transposition, and so inverts $E'$. On the
other hand, $E'$ is a normal subgroup of order three of the nilpotent
group $\Fit(E)$, hence central in $\Fit(E)$. Therefore
$\Fit(E)=E_0$. Finally $E$ is not nilpotent, since it maps onto $S_3$,
and $E/E_0$ is abelian, so $h(E)=2$.
\end{proof}

\begin{proposition}\label{prop:rank-one}
Let $M$ be a soluble group such that every abelian subgroup of
$\Fit(M)$ is cyclic. Then $\W(M)=M$ and $h(M)\le3$. Moreover
$h(M)=3$ if and only if $\Fit(M)=C\times Q_8$ with $C$ cyclic of odd
order and conjugation maps $M/\Fit(M)$ onto $\Out(Q_8)\cong S_3$.
\end{proposition}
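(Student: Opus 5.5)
The height assertions follow almost at once from Lemma~\ref{lem:embedding}; the substance is $\W(M)=M$. Put $D=\Fit(M)=C\times P$ and $E=M/D$. Because the upper Fitting series of $M$ begins with $D$, we have $h(M)=1+h(E)$. Lemma~\ref{lem:embedding} gives $h(E)\le1$ unless $P\cong Q_8$ and $\theta:E\to\Out(Q_8)$ is onto, in which case $h(E)=2$. This proves $h(M)\le3$, and $h(M)=3$ holds exactly under the stated conditions. I would dispose of this first.

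For $\W(M)=M$, let $H\sn M$. By Lemma~\ref{lem:Fstar}, $(DH)^{\N}=H^{\N}$, so we may assume $D\le H$. Put $R=H^{\N}$. It suffices to show $R\trianglelefteq M$, and the tool is the identity $R=[R,H]$ from \eqref{eq:stable}.

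First I control $R\cap D$ prime by prime. Its odd part lies in the cyclic group $C$, so it is characteristic in $C$ and hence normal in $M$. For the $2$-part $R_2=O_2(R\cap D)\le P$, I use that $H/C_H(P)$ embeds in $\Aut(P)$, which is a $2$-group unless $P\cong Q_8$ (Lemma~\ref{lem:out-quaternion}). If $H$ induces only $2$-automorphisms on $P$, then $H$ acts nilpotently on every $H$-invariant section of $P$. The relation $R=[R,H]$, passed to the $H$-invariant section $R/O^{2}(R)$, then forces the $2$-part to vanish. More precisely, I would argue with the series $[R,{}_nH]$ and the quotient $R/(R\cap C)$.

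Otherwise $P=Q_8$ and $H$ contains an element inducing an automorphism of order $3$. That element acts transitively on the three subgroups of order four, so $[Q_8,H]=Q_8$. Combining this with $[D,{}_nH]\le R$ for large $n$ (Lemma~\ref{lem:residual-basics}(iii)) gives $Q_8\le R$. In either case $R\cap D$ is a product of a characteristic subgroup of $C$ with $1$ or $P$, hence normal in $M$.

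Next I treat the image $RD/D=(H/D)^{\N}$, which lies in $E^{\N}$. If $E$ is nilpotent, this image is trivial, so $R=R\cap D\trianglelefteq M$ and we are done.

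If $E$ is not nilpotent, Lemma~\ref{lem:embedding} gives $E^{\N}\le E'\cong C_3$. So either $R\le D$, and we are done as before, or $RD/D=E'$. In the latter case $RD$ is the full preimage of $E'$, a normal subgroup of $M$.

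The main obstacle is this last step: showing $R$ itself, and not merely $RD$, is normal. My plan is to describe $R$ intrinsically as $R=\bigcap_n[RD,{}_nH]$, and to verify that this term equals $\bigcap_n[RD,{}_nM]$. Heuristically, $H$ and $M$ induce the same automorphisms modulo nilpotent actions on the relevant sections: on $C$ the action is abelian, on $Q_8$ the $A_3$-part is what matters, and on $RD/D\cong C_3$ any element of $H\setminus\theta^{-1}(A_3)$ inverts it by Lemma~\ref{lem:embedding}. Failing that, I would instead show directly that $R$ is the unique subgroup of $RD$ supplementing $D$ with the given intersection $R\cap D$, up to conjugation by $D$, and then use that $D\le H$ normalizes $R$.
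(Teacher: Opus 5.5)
\textbf{There is a genuine gap.} Your height argument, the reduction to $D\le H$, and the case $R\le D$ are fine. For the $2$-part, note explicitly that if $H$ induces a $2$-group on $P$, then $\overline H=H/D$ embeds in the product of the abelian group $\Aut(C)$ and a $2$-group. So $\overline H$ is nilpotent, $R\le D$, and $R\cap P=[R\cap P,H]$ forces $R\cap P=1$.

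The gap is the case $RD/D=E'$. You flag it as the main obstacle and leave it open, and neither of your two routes works.
\begin{itemize}
\item The identity $\bigcap_n[RD,{}_nH]=\bigcap_n[RD,{}_nM]$ is false. Take $M=\GL(2,3)\times(C_7\rtimes C_3)$ and $H=\GL(2,3)\times C_7\trianglelefteq M$. Then $R=\SL(2,3)\times1$, but $[RD,{}_nM]=\SL(2,3)\times C_7$ for all $n$. The reason is that $M$ acts on $C/(C\cap R)$ by automorphisms that $H$ does not induce; an abelian action need not be a nilpotent one.
\item The uniqueness of supplements also fails. Take $M=\GL(2,3)\times C_3$ and $H=M$. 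Then $D/(R\cap D)\cong C_3$ has three complements in the abelian group $RD/(R\cap D)\cong C_3\times C_3$, and they are not $D$-conjugate. Schur--Zassenhaus rescues this only when $3\nmid|C:C\cap R|$.
\end{itemize}

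\textbf{The missing idea} is that in this case $H$ itself is normal in $M$, so its characteristic subgroup $R$ is normal.
\begin{itemize}
\item $\overline H$ is nonnilpotent while $\Fit(E)$ is abelian (Lemma~\ref{lem:embedding}), so $\overline H$ contains some $t\notin\Fit(E)$.
\item By the same lemma, $t$ inverts $E'\cong C_3$, so $[E',{}_dt]=E'$ for every $d$.
\item Since $\overline H\sn E$, Lemma~\ref{lem:residual-basics}(iii), with $d$ the defect, gives $E'\le[E,{}_d\overline H]\le\overline H$.
\item As $E/E'$ is abelian, $\overline H\trianglelefteq E$, hence $H\trianglelefteq M$.
\end{itemize}
This is how the paper argues. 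It also makes your prime-by-prime analysis of $R\cap D$ unnecessary in the nonnilpotent case.

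A minor point: the inclusion $[D,{}_nH]\le R$ comes from $D\le H$ and $\gamma_{n+1}(H)=R$ for large $n$, not from Lemma~\ref{lem:residual-basics}(iii), which concerns subnormal subgroups.
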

\begin{proof}
Keep the notation $D=C\times P$ and $E=M/D$. The assertions about
height follow from Lemma~\ref{lem:embedding}, since $h(M)=1+h(E)$
when $M\ne1$.

Let $H\sn M$. The subgroup $DH$ is subnormal in $M$, being the
inverse image of the subnormal subgroup $DH/D$ of $E$, and
$(DH)^{\N}=H^{\N}$ by Lemma~\ref{lem:Fstar}. We may therefore assume
that $D\le H$, so that $H$ is the inverse image of the subnormal
subgroup $\overline H=H/D$ of $E$. Put $R=H^{\N}$.

Suppose first that $\overline H$ is nilpotent. Then $R\le D$, and
since $C$ and $P$ have coprime orders,
\[
 R=A\times B,\qquad A=R\cap C,\quad B=R\cap P.
\]
Every subgroup of the cyclic group $C$ is characteristic in $C$, so
$A\trianglelefteq M$. The subgroup $B$ is normal in $H$, and
\eqref{eq:stable} gives $R=[R,H]=[A,H]\times[B,H]$, whence
$B=[B,H]$. We claim that $B=1$ or $B=P\cong Q_8$; in both cases
$B\trianglelefteq M$, and therefore $R\trianglelefteq M$. The group
$B$ is cyclic or generalized quaternion, as a subgroup of $P$. Let
$X$ be the image of $H$ in $\Aut(B)$. If $X$ is a $2$-group, then
$[B,H]$ is contained in $[B,B\rtimes X]$, which is a proper subgroup
of $B$ unless $B=1$, because $B\rtimes X$ is a $2$-group; hence $B=1$.
By Lemma~\ref{lem:out-quaternion}, $X$ is a $2$-group unless
$B\cong Q_8$. If $B\cong Q_8$ and $B<P$, then $P$ is generalized
quaternion of order at least $16$, so the image of $M$ in $\Aut(P)$
is a $2$-group by Lemma~\ref{lem:out-quaternion}; the image of $H$ in
$\Aut(B)$ is a quotient of a subgroup of it, hence again a $2$-group,
and $B=1$, a contradiction. This proves the claim.

Suppose now that $\overline H$ is not nilpotent. Then $E$ is not
nilpotent, so by Lemma~\ref{lem:embedding} we have $P\cong Q_8$,
$\theta$ is onto, and $E_0=\Fit(E)$ is abelian. Thus
$\overline H\not\le E_0$, and $\overline H$ contains an element $t$
inverting $E'\cong C_3$. Then $[E',{}_dt]=E'$ for every $d$, and
Lemma~\ref{lem:residual-basics}(iii) gives $E'\le\overline H$. Since
$E/E'$ is abelian, $\overline H\trianglelefteq E$, so
$H\trianglelefteq M$ and its characteristic subgroup $R$ is normal in
$M$.

In either case $R\trianglelefteq M$, so $\W(M)=M$.
\end{proof}

\section{Primitive affine $\mathrm W$-groups and Fitting height}
\label{sec:affine}
We now prove the height bounds. Section~\ref{subsec:affine-criterion}
characterizes the soluble primitive $\W$-groups $V\rtimes M$: whether
$V\rtimes M$ is a $\W$-group depends only on $\Fit(M)$
(Theorem~\ref{thm:affine-criterion}). Section~\ref{subsec:height-bounds}
combines this with Section~\ref{sec:rank-one} to bound their height by
four and to identify the groups of height four. It then passes from
primitive groups to arbitrary soluble groups, and from a single layer
to the whole upper $\W$-series.

\subsection{The affine criterion}
\label{subsec:affine-criterion}
Lemmas~\ref{lem:subnormal-fitting} and~\ref{lem:affine-fitting} are
standard facts about subnormal subgroups and affine groups.
Lemma~\ref{lem:affine-lifting} is our main tool for producing
$\W$-groups: if a normal subgroup $Q$ of $M$ acts fixed-point-freely
and meets every nontrivial subnormal subgroup, then $V\rtimes M$
inherits the $\W$-property from $M$.
\begin{lemma}\label{lem:subnormal-fitting}
Let $A\sn X$. Then $\Fit(A)\le\Fit(X)$, and if $A$ is an $r$-group for
a prime $r$, then $A\le O_r(X)$. Consequently, if $X$ is soluble and
$1\ne A\sn X$, then $A\cap\Fit(X)\ne1$.
\end{lemma}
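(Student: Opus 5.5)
The plan is to prove the three assertions in order, each by induction along a subnormal chain, using only the definition of the Fitting subgroup as the largest nilpotent normal subgroup.

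\emph{The inclusion $\Fit(A)\le\Fit(X)$.} Choose a chain $A=A_0\trianglelefteq A_1\trianglelefteq\cdots\trianglelefteq A_d=X$ and induct on $d$; the case $d=0$ is trivial. For $d\ge1$, the subgroup $\Fit(A)$ is characteristic in $A\trianglelefteq A_1$, hence normal in $A_1$, and it is nilpotent. Therefore $\Fit(A)\le\Fit(A_1)$. Since $A_1\sn X$ with defect $d-1$, induction gives $\Fit(A_1)\le\Fit(X)$. Note that this argument needs no solubility hypothesis.

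\emph{The inclusion $A\le O_r(X)$ for an $r$-group $A$.} Here $A$ is nilpotent, so $A=\Fit(A)\le\Fit(X)$ by the first part. As $A$ is an $r$-group, it lies in the Sylow $r$-subgroup of the nilpotent group $\Fit(X)$, which is $O_r(\Fit(X))=O_r(X)$. Alternatively, one can run the same induction with $O_r$ in place of $\Fit$: $O_r(A)$ is characteristic in $A\trianglelefteq A_1$, hence normal in $A_1$, so $O_r(A)\le O_r(A_1)$.

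\emph{The consequence for soluble $X$.} Let $1\ne A\sn X$. Then $A$ is soluble and nontrivial, so $\Fit(A)\ne1$, since a nontrivial soluble group has a nontrivial abelian normal subgroup (for instance the last nontrivial term of its derived series). By the first part, $1\ne\Fit(A)\le A\cap\Fit(X)$.

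No step is a real obstacle. The only point requiring care is to pass the characteristic subgroup up exactly one normal step at a time, rather than claiming it is normal in $X$ directly.
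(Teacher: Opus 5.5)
Your proof is correct and takes essentially the paper's approach: induction along a subnormal chain using that a characteristic subgroup of a normal subgroup is normal, followed by applying the result to the nontrivial Fitting subgroup of a soluble $A$. The paper inducts from the top of the chain instead, showing that a nilpotent (respectively $r$-) subnormal subgroup lies in $\Fit(J)$ (respectively $O_r(J)$) for the penultimate term $J$. You push $\Fit(A)$ up one normal step at a time and derive the $O_r$ statement from $O_r(\Fit(X))=O_r(X)$; the difference is only cosmetic.
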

\begin{proof}
A nilpotent subnormal subgroup $T$ of $X$ lies in $\Fit(X)$, and a
subnormal $r$-subgroup lies in $O_r(X)$. To see this, induct on the
length of a subnormal chain. The normal case follows from the
definitions. Otherwise take the penultimate term $J\trianglelefteq X$
of the chain. Induction gives $T\le\Fit(J)$, respectively
$T\le O_r(J)$. These subgroups are characteristic in $J$, hence
normal in $X$, and so are contained in $\Fit(X)$, respectively
$O_r(X)$.

Now $\Fit(A)$ is characteristic in $A$, and is therefore nilpotent
and subnormal in $X$. If $A\ne1$ is soluble, then $\Fit(A)\ne1$, and
$1\ne\Fit(A)\le A\cap\Fit(X)$.
\end{proof}

\begin{lemma}\label{lem:affine-fitting}
Let $V\ne0$ be a faithful $\FF_pM$-module and put $G=V\rtimes M$. If
$O_p(M)=1$, then
\[
\Fit(G)=V,\qquad h(G)=1+h(M)
\]
when $M$ is soluble. The condition $O_p(M)=1$ holds in particular if
$V$ is irreducible.
\end{lemma}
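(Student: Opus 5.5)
We need $\Fit(G)=V$. The case $G$ soluble then follows quickly: $G/V\cong M$, so $h(G)=1+h(M)$, since $\Fit(G)$ is the first term of the upper Fitting series.

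The plan is to show $\Fit(G)=V$ in two steps: first that $\Fit(G)$ is a $p$-group, then that a normal $p$-subgroup of $G$ can be no larger than $V$.

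First, $\Fit(G)$ is a $p$-group. Let $q\ne p$ be a prime and put $Q=O_q(G)$. Both $Q$ and $V$ are normal with coprime orders, so $[Q,V]\le Q\cap V=1$. Hence $Q\le C_G(V)$. Since $V$ is abelian and $M$ acts faithfully, $C_G(V)=V\rtimes C_M(V)=V$. Therefore $Q\le V$, which forces $Q=1$.

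Second, put $P=O_p(G)$. It contains $V$, and $P=V(P\cap M)$ by Dedekind's law, since $V\le P$ and $G=VM$. The subgroup $P\cap M$ is a normal $p$-subgroup of $M$: it is normal in $M$ because $P\trianglelefteq G$, and its image under $G\to G/V\cong M$ is $PV/V=P/V$, a normal $p$-subgroup of $M$. Thus $P\cap M\le O_p(M)=1$, so $P=V$ and $\Fit(G)=O_p(G)=V$.

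It remains to check the final claim, that irreducibility gives $O_p(M)=1$. Let $U=O_p(M)$, a $p$-group acting on the nonzero $\FF_p$-module $V$. Then $C_V(U)\ne0$, by the standard fixed-point count for $p$-groups acting on $p$-groups: orbits have $p$-power size and $0$ is a fixed point. Since $U\trianglelefteq M$, $C_V(U)$ is $M$-invariant, so irreducibility gives $C_V(U)=V$. Faithfulness then gives $U=1$.

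No step is a real obstacle; the only point needing care is justifying $C_G(V)=V$. This follows from writing $g=vm$ and noting that $v$ centralizes $V$, so $g$ centralizes $V$ exactly when $m\in C_M(V)=1$.
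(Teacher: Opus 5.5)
Your proof is correct and follows essentially the same route as the paper: the $p'$-part of $\Fit(G)$ centralizes $V$ and hence lies in $C_G(V)=V$, so it is trivial, while the $p$-part modulo $V$ is a normal $p$-subgroup of $M$ and hence lies in $O_p(M)=1$. For the last claim, you use the same argument as the paper, namely the fixed-point count for $p$-groups together with irreducibility and faithfulness, to get $O_p(M)=1$.
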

\begin{proof}
Faithfulness gives $C_G(V)=V$. The Fitting subgroup contains
$V$, and its Hall $p'$-subgroup centralizes $V$, so that Hall subgroup
is trivial. The remaining $p$-subgroup has image contained in
$O_p(M)=1$, proving the Fitting identity. The height formula follows
by taking the quotient by $V$.

A finite $p$-group acting on a nonzero vector space over $\FF_p$ has
a nonzero common fixed vector: its non-singleton orbits have
cardinality divisible by $p$, so the number of common fixed vectors
is divisible by $p$ and includes zero. If $V$ is irreducible, the
nonzero fixed space of $O_p(M)$ is $M$-invariant, hence equals $V$,
and faithfulness gives $O_p(M)=1$.
\end{proof}

The next lemma does not require $V$ to be irreducible, and $p$ may
divide $|M|$.

\begin{lemma}[Affine lifting]\label{lem:affine-lifting}
Let $M$ be a group with $\W(M)=M$, and let $Q\trianglelefteq M$ meet
every nontrivial subnormal subgroup of $M$ nontrivially. Let $V\ne0$
be an $\FF_pM$-module such that
\[
 C_V(q)=0\qquad(1\ne q\in Q),
\]
and let $\pi:G=V\rtimes M\twoheadrightarrow M$ be the natural
projection. Then the following hold.
\begin{enumerate}
\renewcommand{\labelenumi}{\textup{(\roman{enumi})}}
\item $M$ acts faithfully on $V$, and $O_p(M)=1$.
\item Every $H\sn G$ satisfies exactly one of the following
alternatives:
\begin{enumerate}
\renewcommand{\labelenumii}{\textup{(\alph{enumii})}}
\item $H\le V$ and $H^{\N}=1$;
\item $H\not\le V$, $V\le H$, and, with $A=H\cap M$, one has
\[
 A\ne1,\qquad A\sn M,\qquad H=VA,\qquad H^{\N}=VA^{\N}.
\]
\end{enumerate}
\item $\W(G)=G$. If $M$ is soluble, then $\Fit(G)=V$ and
$h(G)=1+h(M)$.
\end{enumerate}
\end{lemma}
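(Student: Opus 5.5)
The plan is to prove (i) directly from the fixed-point-free hypothesis. Then, in (ii), a single element of $Q$ inside the image of $H$ forces $V\le H$ and identifies the residual; (iii) follows at once from (ii).

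For (i), let $K$ be the kernel of the action of $M$ on $V$. If $K\ne1$, then $K$ is a nontrivial normal, hence subnormal, subgroup of $M$. By hypothesis it contains some $1\ne q\in Q$. Then $C_V(q)=V\ne0$, a contradiction, so $M$ acts faithfully. Similarly, if $O_p(M)\ne1$, then $O_p(M)\cap Q$ is a nontrivial $p$-group. By the fixed-point argument in the proof of Lemma~\ref{lem:affine-fitting}, it fixes a nonzero vector of $V$. Any nontrivial element of it then has a nonzero fixed vector, again a contradiction.

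For (ii), let $H\sn G$. If $H\le V$, then $H$ is abelian and $H^{\N}=1$, which is (a). Otherwise $\pi(H)=HV/V$ is a nontrivial subnormal subgroup of $M$, so it contains some $1\ne q\in Q$. Choose $h=vq\in H$ with $v\in V$. On $V$ the element $h$ acts as $q$. Since $C_V(q)=0$, the map $x\mapsto[x,q]$ is injective on the finite space $V$, hence bijective, so $[V,h]=V$ and $[V,{}_dh]=V$ for all $d$. Since $H\sn VH$, Lemma~\ref{lem:residual-basics}(iii) applied with $Y=VH$ gives $V=[V,{}_dh]\le[VH,{}_dH]\le H$ for $d$ the defect. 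Thus $V\le H$, and by Dedekind $H=V(H\cap M)=VA$. Here $A\cong\pi(H)$ is nontrivial and subnormal in $M$, since $\pi$ restricts to an isomorphism $A\to\pi(H)$ onto a subnormal subgroup. For the residual, the same computation gives $V=[V,H]$, so induction yields $V\le\gamma_j(H)$ for all $j$, i.e.\ $V\le H^{\N}$. Lemma~\ref{lem:residual-basics}(i) gives $A^{\N}\le H^{\N}$. The subgroup $VA^{\N}$ is normal in $H=VA$, and $H/VA^{\N}$ is a quotient of $A/A^{\N}$, hence nilpotent. So $H^{\N}\le VA^{\N}$, and equality holds. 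The two alternatives are visibly exclusive.

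For (iii), in case (a) the residual is trivial. In case (b), $H^{\N}=VA^{\N}$ with $A\sn M$. Since $\W(M)=M$, $M$ normalizes $A^{\N}$, and $V\trianglelefteq G$, so $G=VM$ normalizes $VA^{\N}$. Hence $\W(G)=G$. For soluble $M$, (i) supplies faithfulness and $O_p(M)=1$, so Lemma~\ref{lem:affine-fitting} gives $\Fit(G)=V$ and $h(G)=1+h(M)$.

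The only delicate step is showing $V\le H$ in (ii). It requires using subnormality of $H$ in $VH$ through iterated commutators, rather than normality, together with the observation that $q$ and $vq$ induce the same action on $V$.
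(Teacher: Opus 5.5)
Your proposal is correct and follows essentially the paper's proof: faithfulness and $O_p(M)=1$ come from the hypothesis on $Q$ and the fixed-point argument, $V\le H$ comes from iterating $[V,h]=V$ together with Lemma~\ref{lem:residual-basics}(iii), and (iii) is read off from (ii). The only differences are cosmetic. You apply Lemma~\ref{lem:residual-basics}(iii) inside $VH$ rather than $G$. You also prove $H^{\N}=VA^{\N}$ by two direct containments, whereas the paper uses $\pi(H^{\N})=A^{\N}$ and the fact that $H^{\N}$ contains the kernel $V$.
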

\begin{proof}
(i) The kernel $K$ of the action is normal in $M$, and $K\cap Q=1$
because every nonidentity element of $Q$ moves some vector. The
hypothesis on $Q$ forces $K=1$. Similarly, if $O_p(M)\ne1$, it
contains a nonidentity element $q\in Q$ of $p$-power order. As
recalled in the proof of Lemma~\ref{lem:affine-fitting}, $q$ fixes a
nonzero vector of $V$, contrary to $C_V(q)=0$. Hence $O_p(M)=1$.

(ii) Let $H\sn G$ and put $A=\pi(H)$, so that $A\sn M$. If $A=1$,
then $H\le V$ is abelian and $H^{\N}=1$, which is alternative (a).

Suppose $A\ne1$, choose $1\ne q\in A\cap Q$, and choose $y\in H$ with
$\pi(y)=q$. Since $V$ is abelian, conjugation by $y$ induces on $V$
the same linear map as $q$. The hypothesis $C_V(q)=0$ says that $q-1$
is injective, hence invertible, on $V$. Consequently, for every
$d\ge1$,
\begin{equation}\label{eq:invertible}
 [V,{}_dy]=(q-1)^dV=V.
\end{equation}
Choose $d\ge1$ at least the subnormal defect of $H$ in $G$.
Lemma~\ref{lem:residual-basics}(iii) and~\eqref{eq:invertible} give
\[
 V=[V,{}_dy]\le[G,{}_dH]\le H.
\]
Therefore $H$ is the full inverse image of $A$, so $H=VA$ and
$A=H\cap M$. Equation~\eqref{eq:invertible} with $d=1$ gives
$[V,H]=V$; since $V\le H$, induction yields $V\le\gamma_i(H)$ for
every $i$, and hence $V\le H^{\N}$. By Lemma~\ref{lem:residual-basics}(ii),
$\pi(H^{\N})=A^{\N}$. Because $H^{\N}$ contains the kernel $V$, it is
the full inverse image of $A^{\N}$, that is, $H^{\N}=VA^{\N}$. This is
alternative (b).

(iii) In alternative (b), $A\sn M$ and $\W(M)=M$ give
$A^{\N}\trianglelefteq M$, so its full inverse image $VA^{\N}$ is
normal in $G$. Together with alternative (a), this shows that
$H^{\N}\trianglelefteq G$ for every $H\sn G$; thus $\W(G)=G$. The last
assertion follows from (i) and Lemma~\ref{lem:affine-fitting}.
\end{proof}

\begin{lemma}\label{lem:fpf-abelian}
Let $A$ be an abelian group of order prime to $p$ acting on a
nonzero $\FF_p$-vector space $V$ with $C_V(x)=0$ for every
$1\ne x\in A$. Then $A$ is cyclic.
\end{lemma}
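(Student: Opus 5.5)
The plan is to show that $A$ has no subgroup $C_q\times C_q$ for any prime $q$. Once this is known, every Sylow subgroup of the abelian group $A$ is cyclic, by the structure theorem for finite abelian groups or by (S4), and hence $A$ is cyclic. Since $A$ acts fixed-point-freely and subgroups inherit this property, we may replace $A$ by a subgroup $B=\langle x\rangle\times\langle y\rangle\cong C_q\times C_q$ with $q\ne p$, and aim for a contradiction.

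The key step is a counting argument on $V$. Because $q\ne p$, the group algebra $\FF_pB$ is semisimple (Maschke), although for counting we only need an orbit argument. Every nonzero vector $v$ has trivial stabilizer in $B$, since a nonidentity element fixing $v$ would have nonzero fixed space. So $B$ acts semiregularly on $V\setminus\{0\}$, and $q^2$ divides $|V|-1$.

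That divisibility alone gives no contradiction, so I would instead use the $q+1$ cyclic subgroups $B_1,\dots,B_{q+1}$ of order $q$ in $B$. For any $\FF_pB$-module, with $q$ invertible in $\FF_p$, there is the standard decomposition
\[
V=C_V(B)\oplus\bigoplus_{i=1}^{q+1}\bigl(\text{complement of }C_V(B)\text{ in }C_V(B_i)\bigr)
\]
coming from the character-sum identity
\[
\sum_{i}e_{B_i}=e_B\cdot q+1-e_B\cdot(q+1)+\dots .
\]
Concretely, let $e_K=|K|^{-1}\sum_{k\in K}k$ be the idempotent of a subgroup $K$. Then
\[
\sum_{i=1}^{q+1}e_{B_i}=1+q\,e_B,
\]
since each nonidentity element lies in exactly one $B_i$. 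Applying this identity to $V$: each $e_{B_i}$ projects onto $C_V(B_i)=0$, and $e_B$ projects onto $C_V(B)=0$. Hence $0=(1+q\,e_B)v=v$ for every $v$, so $V=0$, contradicting $V\ne0$.

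The main point requiring care is verifying the idempotent identity. Expanding $\sum_i\sum_{k\in B_i}k/q$, the identity element appears $q+1$ times and each nonidentity element appears once, giving
\[
\frac{(q+1)\cdot1+\sum_{1\ne b\in B}b}{q}.
\]
This equals
\[
\frac{q\cdot1+\sum_{b\in B}b}{q}=1+q\,e_B.
\]
Only the invertibility of $q$ in $\FF_p$ is used, and that is guaranteed by $\gcd(|A|,p)=1$.
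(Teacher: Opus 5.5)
Your argument is correct, but it takes a different route from the paper.

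The paper extends scalars to $\overline{\FF}_p$ and simultaneously diagonalizes the coprime abelian action. It then picks a common eigenvector and observes that its eigencharacter $\chi:A\to\overline{\FF}_p^{\,\times}$ is injective: an element of $\ker\chi$ would have eigenvalue $1$, contradicting the invertibility of $x-1$. So $A$ embeds in the multiplicative group of a field and is cyclic in one stroke.

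You instead reduce to excluding a subgroup $B\cong C_q\times C_q$. You rule it out with the identity $\sum_{i=1}^{q+1}e_{B_i}=1+q\,e_B$ in $\FF_pB$, which you verify correctly. Applied to $v\in V$, each $e_{B_i}v$ and $e_Bv$ lie in fixed spaces that vanish, which forces $v=0$.

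What each route buys:
\begin{itemize}
\item Your route stays over $\FF_p$ and needs no diagonalization. It uses commutativity of $A$ only in the last step (no $C_q\times C_q$ implies cyclic). So it actually proves more: any group acting with $C_V(x)=0$ for all $x\ne1$ has no subgroup $C_q\times C_q$, hence has cyclic or generalized quaternion Sylow subgroups by (S4). This is the classical Frobenius-complement fact.
\item The paper's route is shorter and gives the sharper conclusion that $A$ is isomorphic to a subgroup of $\overline{\FF}_p^{\,\times}$. But it relies on $A$ being abelian from the start.
\end{itemize}

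For a clean write-up, delete the orbit-counting paragraph and the first displayed ``character-sum identity''. That formula, with its trailing dots, is not correct as written, and the concrete identity $\sum_i e_{B_i}=1+q\,e_B$ is all you need.
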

\begin{proof}
Since $A$ is abelian of order prime to $p$, its action is
simultaneously diagonalizable over an algebraic closure of $\FF_p$.
Choose a common eigenvector and let
$\chi:A\to\overline{\FF}_p^{\,\times}$ be its eigencharacter. If
$\ker\chi$ contained an element $x\ne1$, then $x$ would have
eigenvalue one on the scalar extension of $V$, contradicting the
invertibility of $x-1$ on $V$, which is preserved by extension of
scalars. Hence $\chi$ is injective, and $A$ is cyclic, as a finite
subgroup of the multiplicative group of a field.
\end{proof}

\begin{theorem}\label{thm:affine-criterion}
Let $V\ne0$ be a finite-dimensional $\FF_p$-vector space, and let a
finite soluble group $M$ act faithfully and irreducibly on $V$. Put
\[
 G=V\rtimes M,\qquad D=\Fit(M).
\]
The following conditions are equivalent:
\begin{enumerate}
\renewcommand{\labelenumi}{\textup{(\roman{enumi})}}
\item $\W(G)=G$;
\item $C_V(x)=0$ for every $1\ne x\in D$;
\item $C_V(x)=0$ for every element $x\in D$ of prime order;
\item every abelian subgroup of $D$ is cyclic.
\end{enumerate}
Whenever these conditions hold, $\W(M)=M$ and the subnormal subgroups
of $G$ and their residuals are described by
Lemma~\ref{lem:affine-lifting}(ii). In all cases $O_p(M)=1$,
$p\nmid|D|$, $\Fit(G)=V$, and $h(G)=1+h(M)$.
\end{theorem}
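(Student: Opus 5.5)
First I would settle the unconditional claims. Lemma~\ref{lem:affine-fitting} gives $O_p(M)=1$, $\Fit(G)=V$ and $h(G)=1+h(M)$. The Sylow $p$-subgroup of $D$ is $O_p(D)$, which is characteristic in $D$ and hence lies in $O_p(M)=1$; so $p\nmid|D|$.

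Next come the easy equivalences. (ii)$\Rightarrow$(iii) is trivial. For (iii)$\Rightarrow$(ii), take $1\ne x\in D$ and let $y$ be a power of $x$ of prime order. Then $C_V(x)\le C_V(y)=0$.

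For (ii)$\Rightarrow$(iv), let $A\le D$ be abelian. Its order is prime to $p$, and (ii) says $A$ acts fixed-point-freely, so Lemma~\ref{lem:fpf-abelian} makes $A$ cyclic.

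For (iv)$\Rightarrow$(iii), take $x\in D$ of prime order $q$. By (S4) the Sylow $q$-subgroup of $D$ has a unique subgroup of order $q$. That subgroup is $\langle x\rangle$, which is therefore characteristic in $D$ and normal in $M$. Then $C_V(x)$ is an $M$-invariant subspace. It is proper because the action is faithful, so irreducibility gives $C_V(x)=0$.

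For (iv)$\Rightarrow$(i), I would apply Lemma~\ref{lem:affine-lifting} with $Q=D$. Proposition~\ref{prop:rank-one} gives $\W(M)=M$. Lemma~\ref{lem:subnormal-fitting} says $D$ meets every nontrivial subnormal subgroup of the soluble group $M$. Finally, (ii), which we now have, gives $C_V(q)=0$ for $1\ne q\in D$. The lemma then yields $\W(G)=G$ and also describes the subnormal subgroups and their residuals as in Lemma~\ref{lem:affine-lifting}(ii). This also proves the add-on assertion $\W(M)=M$ whenever the conditions hold.

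The substantive direction is (i)$\Rightarrow$(ii). Take $1\ne x\in D$. Since $p\nmid|D|$, $x$ is a $p'$-element, so coprime action gives $V=[V,x]\oplus C_V(x)$, and $[V,x]$ is $x$-invariant. Put $H=[V,x]\langle x\rangle$.

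I claim $H\sn G$. The subgroup $VD$ is normal in $G$. Also $V\langle x\rangle\sn VD$, since $V\langle x\rangle/V$ is subnormal in the nilpotent group $VD/V\cong D$. Finally $H\trianglelefteq V\langle x\rangle$: $V$ is abelian and normalizes $H$ because $[V,x]\le H$, and $x$ normalizes $H$ as well.

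Next I compute $H^{\N}=[V,x]$. On one hand $H/[V,x]$ is cyclic, so $H^{\N}\le[V,x]$. On the other hand $[[V,x],x]=[V,x]$ by coprime action, so $[V,x]\le\gamma_i(H)$ for all $i$.

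Now (i) makes $[V,x]$ normal in $G$. It is therefore $M$-invariant, so it equals $0$ or $V$ by irreducibility. It cannot be $0$, because $x\ne1$ acts faithfully. Hence $[V,x]=V$ and $C_V(x)=0$.

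The main care point is this subnormality and residual computation. The key ingredient there is the coprimality $p\nmid|D|$, which is why I establish it first.
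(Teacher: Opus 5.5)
Your proposal is correct and follows the paper's proof essentially step for step: the unconditional facts via Lemma~\ref{lem:affine-fitting}, the test subgroup $[V,x]\langle x\rangle$ with residual $[V,x]$, Lemma~\ref{lem:fpf-abelian}, the unique subgroup of prime order from (S4), and Lemma~\ref{lem:affine-lifting} with $Q=D$ together with Proposition~\ref{prop:rank-one}. The only difference is cosmetic. You run the test subgroup for arbitrary $1\ne x\in D$ and so prove (i)$\Rightarrow$(ii) directly, whereas the paper uses $x$ of prime order to prove (i)$\Rightarrow$(iii) and then passes to (ii).
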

\begin{proof}
By Lemma~\ref{lem:affine-fitting}, $O_p(M)=1$, $\Fit(G)=V$ and
$h(G)=1+h(M)$. The Sylow $p$-subgroup of $D$ is characteristic in $D$,
hence normal in $M$ and contained in $O_p(M)=1$; thus $p\nmid|D|$.

(i)$\Rightarrow$(iii). We test the $\W$-group property on the
subnormal subgroups $[V,x]\langle x\rangle$, $x\in D$. Suppose that
$\W(G)=G$. Let $x\in D$ have prime order, and set
\[
 A=\langle x\rangle,\qquad U=[V,A],\qquad T=UA.
\]
Every subgroup of a finite nilpotent group is subnormal, since a
proper subgroup is properly contained in its normalizer. Hence
$A\sn D\trianglelefteq M$, and lifting a chain gives $VA\sn G$.
The subgroup $U$ is $A$-invariant. Conjugating an element of $A$ by
an element of $V$ changes it by an element of $[V,A]=U$, while $V$
centralizes $U$. Thus $T\trianglelefteq VA$ and $T\sn G$.

Since $p\nmid|A|$, coprime action on $V$ gives
\[
 V=C_V(A)\oplus U,\qquad [U,A]=U,
\]
for instance by means of the averaging projection
$|A|^{-1}\sum_{a\in A}a$ onto $C_V(A)$. Now $T/U\cong A$ is
nilpotent, so $T^{\N}\le U$. Conversely, $[U,T]=[U,A]=U$ implies by
induction that $U\le\gamma_i(T)$ for every $i$. Hence $T^{\N}=U$.
The $\W$-group hypothesis gives $U\trianglelefteq G$, so $U$ is an
$M$-invariant subspace of $V$, and $U=0$ or $U=V$. The first
possibility would make $x\ne1$ act trivially on $V$, contradicting
faithfulness. Thus $U=V$, and $C_V(x)=0$.

(iii)$\Rightarrow$(ii). If $1\ne x\in D$ has order $n$, choose a
prime $r\mid n$. The element $y=x^{n/r}$ has prime order, and
$C_V(x)\le C_V(y)=0$.

(ii)$\Rightarrow$(iv). An abelian subgroup of $D$ has order prime to
$p$, and Lemma~\ref{lem:fpf-abelian} shows that it is cyclic.

(iv)$\Rightarrow$(iii). Let $x\in D$ have prime order $r$, and let
$R$ be the Sylow $r$-subgroup of $D$. By (S4), $\langle x\rangle$ is
the unique subgroup of order $r$ of $R$, so it is characteristic in
$R$, hence in $D$, and normal in $M$. Consequently $C_V(x)$ is an
$M$-invariant subspace of $V$, and $C_V(x)\ne V$ by faithfulness.
Irreducibility gives $C_V(x)=0$.

Finally, assume (ii)--(iv). Proposition~\ref{prop:rank-one} gives
$\W(M)=M$. If $M=1$, then $G=V$ is abelian. Otherwise, by
Lemma~\ref{lem:subnormal-fitting}, the normal subgroup $Q=D$ meets
every nontrivial subnormal subgroup of $M$ nontrivially, and it acts
fixed-point-freely by (ii). Lemma~\ref{lem:affine-lifting} gives
$\W(G)=G$ and the description of the subnormal subgroups.
\end{proof}

Thus, for a faithful irreducible module, whether $V\rtimes M$ is a
$\W$-group depends only on $\Fit(M)$.

\subsection{Height bounds}
\label{subsec:height-bounds}
The height bound for primitive $\W$-groups now follows by counting
layers: the module $V$, the Fitting subgroup of $M$, and at most two
further layers in $M/\Fit(M)$.
\begin{theorem}\label{thm:primitive-height}
Let $G=V\rtimes M$ be as in Theorem~\ref{thm:affine-criterion}, and
suppose that $\W(G)=G$. Then $h(G)\le4$, and the following are
equivalent:
\begin{enumerate}
\item $h(G)=4$;
\item $\Fit(M)=C\times Q$, where $C$ is cyclic of odd order and
$Q\cong Q_8$, and the homomorphism
$\theta:M/\Fit(M)\to\Out(Q)\cong S_3$ induced by conjugation is
surjective.
\end{enumerate}
If the Sylow $2$-subgroups of $M$ are abelian, then $\Fit(M)$ is
cyclic, $M/\Fit(M)$ is abelian, and $h(G)\le3$. This applies in
particular if $|G|$ is odd or the Sylow $2$-subgroups of $G$ are
abelian.
\end{theorem}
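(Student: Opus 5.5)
The plan is to reduce everything to Proposition~\ref{prop:rank-one} through Theorem~\ref{thm:affine-criterion}. Since $\W(G)=G$, Theorem~\ref{thm:affine-criterion} shows that every abelian subgroup of $D=\Fit(M)$ is cyclic. It also gives $h(G)=1+h(M)$ and $p\nmid|D|$. Proposition~\ref{prop:rank-one} then applies to $M$ and yields $h(M)\le3$, hence $h(G)\le4$. Moreover, $h(G)=4$ holds exactly when $h(M)=3$. By the ``moreover'' clause of Proposition~\ref{prop:rank-one}, this happens if and only if $\Fit(M)=C\times Q_8$ with $C$ cyclic of odd order and conjugation maps $M/\Fit(M)$ onto $\Out(Q_8)$. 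This is precisely condition (2), so (1)$\Leftrightarrow$(2) is immediate.

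For the abelian-Sylow clause, suppose the Sylow $2$-subgroups of $M$ are abelian. Then the Sylow $2$-subgroup $P$ of $D$ is abelian, so by (S4) and the cyclicity of abelian subgroups it is cyclic; hence $D=C\times P$ is cyclic. Conjugation embeds $M/D$ into $\Out(D)=\Aut(D)$ (Lemma~\ref{lem:embedding}, using $C_M(D)\le D$ from (S3)). Since $\Aut$ of a cyclic group is abelian, $M/D$ is abelian, so $h(M)\le2$ and $h(G)=1+h(M)\le3$.

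It remains to verify that the hypothesis on $G$ implies the hypothesis on $M$. If $|G|$ is odd, then $M$ has trivial Sylow $2$-subgroups, which are abelian. If the Sylow $2$-subgroups of $G$ are abelian, then those of $M\le G$ are contained in Sylow $2$-subgroups of $G$ and so are abelian as well.

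I expect no serious obstacle here, since the substantive work is already done in Lemma~\ref{lem:embedding} and Proposition~\ref{prop:rank-one}. The only point needing care is that the height identity $h(G)=1+h(M)$ holds in all cases, including $M=1$. This follows from Lemma~\ref{lem:affine-fitting}, because $O_p(M)=1$ by irreducibility.
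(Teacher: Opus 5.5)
Your proposal is correct and follows the paper's proof essentially step for step. Theorem~\ref{thm:affine-criterion} gives cyclic abelian subgroups in $\Fit(M)$ and $h(G)=1+h(M)$; Proposition~\ref{prop:rank-one} gives $h(M)\le3$ with equality exactly under (2); and the abelian-Sylow case uses the embedding of $M/\Fit(M)$ into the abelian group $\Aut(\Fit(M))$ from Lemma~\ref{lem:embedding}. Your appeal to (S4) there is unnecessary, since an abelian Sylow $2$-subgroup of $\Fit(M)$ is itself an abelian subgroup of $\Fit(M)$ and hence cyclic.
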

\begin{proof}
By Theorem~\ref{thm:affine-criterion}, every abelian subgroup of
$\Fit(M)$ is cyclic and $h(G)=1+h(M)$. Proposition~\ref{prop:rank-one}
gives $h(M)\le3$, with equality exactly under (2). If the Sylow
$2$-subgroups of $M$ are abelian, then so is the Sylow $2$-subgroup
$P$ of $\Fit(M)$; the generalized quaternion case is excluded, so
$P$ is cyclic, $\Fit(M)$ is cyclic, and $M/\Fit(M)$ embeds in the
abelian group $\Aut(\Fit(M))$ by Lemma~\ref{lem:embedding}. Hence
$h(M)\le2$. Only the Sylow $2$-subgroup of $\Fit(M)$ enters this
argument: $h(G)\le3$ as soon as it is not isomorphic to $Q_8$. If
$|G|$ is odd or the Sylow $2$-subgroups of $G$ are abelian, then the
Sylow $2$-subgroups of its subgroup $M$ are abelian.
\end{proof}

\begin{corollary}\label{cor:primitive-height-four}
The soluble primitive $\W$-groups of height four are exactly the
groups $V\rtimes M$, where $M$ is a soluble group acting faithfully
and irreducibly on an elementary abelian group $V\ne0$ such that
$\Fit(M)=C\times Q_8$, with $C$ cyclic of odd order, and conjugation
maps $M/\Fit(M)$ onto $\Out(Q_8)\cong S_3$.
\end{corollary}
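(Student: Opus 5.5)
The corollary is a two-way identification, and I will prove it by combining Theorem~\ref{thm:affine-criterion} with Theorem~\ref{thm:primitive-height}. Here ``soluble primitive group'' is read as in Lemma~\ref{lem:primitive-quotients}: a group $V\rtimes M$ with $V\ne0$ elementary abelian and $M$ soluble acting faithfully and irreducibly on $V$. Equivalently, it is a soluble group with a core-free maximal subgroup, and Lemma~\ref{lem:primitive-quotients} puts it in this form with $M$ the maximal subgroup.

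\emph{Forward direction.} Let $G=V\rtimes M$ be a soluble primitive $\W$-group with $h(G)=4$. Theorem~\ref{thm:primitive-height} applies directly, since its only hypotheses are the standing ones of Theorem~\ref{thm:affine-criterion} together with $\W(G)=G$. Its equivalence (1)$\Leftrightarrow$(2) then gives $\Fit(M)=C\times Q_8$ with $C$ cyclic of odd order. It also gives that $\theta\colon M/\Fit(M)\to\Out(Q_8)\cong S_3$ is onto.

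\emph{Reverse direction.} Let $M$ act faithfully and irreducibly with $\Fit(M)=C\times Q_8$ and $\theta$ onto. The step requiring a check is that $G$ is a $\W$-group, since Theorem~\ref{thm:primitive-height} assumes this. I would use criterion (iv) of Theorem~\ref{thm:affine-criterion}: every abelian subgroup of $\Fit(M)$ must be cyclic.
\begin{itemize}
\item The Sylow subgroups of $C\times Q_8$ are cyclic (those of $C$) or quaternion ($Q_8$).
\item By (S4), a nilpotent group all of whose Sylow subgroups are cyclic or generalized quaternion has only cyclic abelian subgroups.
\end{itemize}
Hence $\W(G)=G$. Implication (2)$\Rightarrow$(1) of Theorem~\ref{thm:primitive-height} then yields $h(G)=4$.

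The only point needing care is that the corollary's description does not assume the $\W$-property. That gap is closed by the criterion above. The rest is bookkeeping with results already established, so I expect no real obstacle.
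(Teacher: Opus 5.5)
Your proposal is correct and follows the paper's proof. In the reverse direction you use criterion (iv) of Theorem~\ref{thm:affine-criterion} to obtain the $\W$-property and then Theorem~\ref{thm:primitive-height} to get height four; the forward direction is Theorem~\ref{thm:primitive-height} directly.
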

\begin{proof}
If $M$ has these properties, then every abelian subgroup of $\Fit(M)$
is cyclic, so $V\rtimes M$ is a $\W$-group by
Theorem~\ref{thm:affine-criterion}, and has height four by
Theorem~\ref{thm:primitive-height}. The converse is
Theorem~\ref{thm:primitive-height}.
\end{proof}

The smallest example is $\AGL(2,3)$
(Proposition~\ref{prop:agl}). The conditions concern $M$ alone:
the $S_3$ condition is the surjectivity of the induced outer action
$M/\Fit(M)\longrightarrow\Out(Q_8)\cong S_3$. It neither asserts
nor excludes the existence of a subgroup isomorphic to $S_3$.

\begin{theorem}\label{thm:soluble-height}
If $G$ is soluble and $\W(G)=G$, then $h(G)\le4$.
If, in addition, $|G|$ is odd or the Sylow $2$-subgroups of $G$
are abelian, then $h(G)\le3$. Moreover, $h(G)=4$ if and only if $G$
has a primitive quotient $G/\Core_G(L)$, with $L$ maximal, of the form
described in Corollary~\ref{cor:primitive-height-four}.
\end{theorem}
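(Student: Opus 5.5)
The plan is to reduce to primitive quotients and then apply Theorem~\ref{thm:primitive-height}. Let $G$ be soluble with $\W(G)=G$; we may assume $G\ne1$. By Lemma~\ref{lem:primitive-quotients},
\[
h(G)=\max_{L}h\bigl(G/\Core_G(L)\bigr),
\]
where $L$ runs over the maximal subgroups of $G$. Each quotient $Y=G/\Core_G(L)$ has the form $V\rtimes J$, with $V$ elementary abelian and $J=L/\Core_G(L)$ soluble and acting faithfully and irreducibly on $V$. By Theorem~\ref{thm:functor}, every quotient of a $\W$-group is a $\W$-group, so $\W(Y)=Y$. Therefore Theorem~\ref{thm:primitive-height} applies to each $Y$ and gives $h(Y)\le 4$. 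Taking the maximum yields $h(G)\le4$.

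For the odd-order and abelian-Sylow case, the hypothesis passes to the quotients. If $|G|$ is odd, then $|Y|$ is odd. If the Sylow $2$-subgroups of $G$ are abelian, then those of $Y$ are too, since Sylow subgroups of a quotient are images of Sylow subgroups of $G$. The last clause of Theorem~\ref{thm:primitive-height} then gives $h(Y)\le3$ for every primitive quotient, and hence $h(G)\le3$.

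For the characterization of height four, I would use the same maximum formula in both directions.
\begin{itemize}
\item If $h(G)=4$, then some primitive quotient $Y=G/\Core_G(L)$ has $h(Y)=4$, because no quotient exceeds $4$. This $Y$ is a soluble primitive $\W$-group of height four, so Corollary~\ref{cor:primitive-height-four} shows it has the stated form.
\item Conversely, suppose such a quotient $Y$ exists. Corollary~\ref{cor:primitive-height-four} gives $h(Y)=4$. Then (S1) gives $4=h(Y)\le h(G)\le4$.
\end{itemize}

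There is no real obstacle here. The only points needing care are that $J$ is soluble and faithful irreducible, which is exactly the content of Lemma~\ref{lem:primitive-quotients}, and the routine transfer of the Sylow hypothesis to quotients.
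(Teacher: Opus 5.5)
Your proposal is correct and follows the paper's proof essentially step for step. You reduce to the primitive quotients via Lemma~\ref{lem:primitive-quotients}, pass the $\W$-property to them with Theorem~\ref{thm:functor}, apply Theorem~\ref{thm:primitive-height} (including its odd-order and abelian-Sylow clause), and read off the height-four case from Corollary~\ref{cor:primitive-height-four}.
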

\begin{proof}
The case $G=1$ is clear. By Lemma~\ref{lem:primitive-quotients}, $h(G)$
is the maximum of the heights of the primitive quotients
$Y=G/\Core_G(L)$, each of the form $V\rtimes J$ with $J$ faithful and
irreducible on $V$. By Theorem~\ref{thm:functor}, each $Y$ is a
$\W$-group, so Theorem~\ref{thm:primitive-height} gives $h(Y)\le4$. If
$G$ has odd order or abelian Sylow $2$-subgroups, so does $Y$, and
$h(Y)\le3$. Finally, $h(G)=4$ exactly when some $Y$ has height four,
and Corollary~\ref{cor:primitive-height-four} describes these $Y$.
\end{proof}

\begin{corollary}\label{cor:soluble-length}
For every soluble group $G$,
\[
 \ellW(G)\le h(G)\le4\ellW(G).
\]
If $|G|$ is odd or the Sylow $2$-subgroups of $G$ are abelian,
then $h(G)\le3\ellW(G)$.
\end{corollary}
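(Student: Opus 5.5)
The lower bound $\ellW(G)\le h(G)$ is already contained in Theorem~\ref{thm:general-height}, so only the upper bounds need proof. For these I will sum heights over the layers of the upper $\W$-series. Put $n=\ellW(G)$ and consider the series
\[
1=\W_0(G)\le\W_1(G)\le\cdots\le\W_n(G)=G.
\]
Each term is characteristic in $G$, so each $\W_i(G)$ is normal in $\W_{i+1}(G)$ and in $G$. Applying the subadditivity in (S1) repeatedly to this normal series gives
\[
h(G)\le\sum_{i=0}^{n-1}h\bigl(\W_{i+1}(G)/\W_i(G)\bigr).
\]
One can write this as an induction on $i$ with $h(\W_{i+1}(G))\le h(\W_i(G))+h(\W_{i+1}(G)/\W_i(G))$. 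All groups involved are sections of the soluble group $G$, so the Fitting height is defined throughout.

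It therefore suffices to show that each layer $L_i=\W_{i+1}(G)/\W_i(G)$ is a soluble $\W$-group. Then Theorem~\ref{thm:soluble-height} gives $h(L_i)\le4$, and $h(L_i)\le3$ under the parity or Sylow hypothesis.

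By definition, $L_i=\W(G/\W_i(G))$. Theorem~\ref{thm:structure}(iv), applied to the group $G/\W_i(G)$, gives $\W(L_i)=L_i$. Solubility of $L_i$ holds because $L_i$ is a section of $G$.

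For the refined bound, suppose first that $|G|$ is odd. Then $|L_i|$ is odd. Suppose instead that the Sylow $2$-subgroups of $G$ are abelian. A Sylow $2$-subgroup of the section $L_i$ is the image of a Sylow $2$-subgroup of the corresponding subgroup $\W_{i+1}(G)$. That Sylow $2$-subgroup lies in a Sylow $2$-subgroup of $G$, hence is abelian, and so its image in $L_i$ is abelian too.

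Summing gives $h(G)\le4n$, respectively $h(G)\le3n$. I expect no real obstacle here: everything substantive is in Theorem~\ref{thm:soluble-height}. The only point to state carefully is that the abelian-Sylow-$2$ hypothesis passes to sections, and that passage is routine.
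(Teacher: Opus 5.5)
Your proposal is correct and is essentially the paper's own proof. You take the lower bound from Theorem~\ref{thm:general-height}, then apply subadditivity of Fitting height along the upper $\W$-series. Each layer $\W(G/\W_i(G))$ is a soluble $\W$-group by Theorem~\ref{thm:structure}(iv), hence of height at most $4$ (at most $3$ under the extra hypothesis) by Theorem~\ref{thm:soluble-height}, and your check that those hypotheses pass to sections matches the paper's.
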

\begin{proof}
The lower inequality is Theorem~\ref{thm:general-height}. Set
$n=\ellW(G)$. By Theorem~\ref{thm:structure}(iv), every factor
\[
 X_i=\W_{i+1}(G)/\W_i(G)=\W(G/\W_i(G))\qquad(0\le i<n)
\]
is a soluble $\W$-group, and Theorem~\ref{thm:soluble-height} gives
$h(X_i)\le4$. Subadditivity along the upper series yields
\[
 h(G)\le\sum_{i=0}^{n-1}h(X_i)\le4n.
\]
When $G$ has odd order or abelian Sylow $2$-subgroups, every $X_i$
retains the corresponding property, since each is a subgroup of a
quotient of $G$, and each height is at most three.
\end{proof}

\section{Examples of length one}
\label{sec:construction}
We now show that the bounds of Section~\ref{sec:affine} are attained
by primitive groups, and construct the blocks used in
Section~\ref{sec:extremal}. Lemma~\ref{lem:GL23} records the structure
of $\GL(2,3)$. The group $\AGL(2,3)$ attains height four
(Proposition~\ref{prop:agl}), and a group of order $21\cdot29^3$ attains
height three with odd order (Proposition~\ref{prop:sharp3}).
Theorem~\ref{thm:avoid-primes} provides odd-order blocks of height
three that avoid any prescribed finite set of primes.

\begin{lemma}\label{lem:GL23}
Let $M=\GL(2,3)$, $L=\SL(2,3)$, and let $Q\cong Q_8$ be the inverse
image of the normal Klein four subgroup of $M/Z(M)\cong S_4$. Then
\[
 \Fit(M)=Q,\qquad M/Q\cong S_3,\qquad M'=L,\qquad h(M)=3.
\]
\end{lemma}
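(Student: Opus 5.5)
The plan is to work inside $M/Z(M)\cong\mathrm{PGL}(2,3)\cong S_4$ and pull everything back to $M$. First I will record the standard facts: $Z(M)=\{\pm I\}$ has order $2$, and $M$ acts faithfully on the four lines of $\FF_3^2$, which gives the isomorphism $M/Z(M)\cong S_4$. Under this isomorphism $L/Z(M)$ is $A_4$, since $\det$ has kernel $L$ of index $2$ and $A_4$ is the unique index-two subgroup of $S_4$. Let $K\cong C_2\times C_2$ be the normal Klein subgroup and $Q$ its inverse image, so $|Q|=8$ and $Q\trianglelefteq M$. To see that $Q\cong Q_8$, I will check that $\pm I$ is the only involution of $\SL(2,3)$: an involution $g$ with $g\neq\pm I$ has eigenvalues $1$ and $-1$, hence determinant $-1$. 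Thus $Q\le L$ is a group of order $8$ with a unique involution, and it is not cyclic, because its image $K$ in $M/Z(M)$ is not cyclic. By (S4), $Q\cong Q_8$.

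Next I will identify $\Fit(M)$. $Q$ is a normal $2$-subgroup, so $Q\le O_2(M)\le\Fit(M)$. For the reverse inclusion, the image of $\Fit(M)$ in $S_4$ is a nilpotent normal subgroup. The normal subgroups of $S_4$ are $1$, $K$, $A_4$ and $S_4$, and of these only $1$ and $K$ are nilpotent. So $\Fit(M)\le Q$, giving $\Fit(M)=Q$. It follows that $M/Q\cong S_4/K\cong S_3$.

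For the derived subgroup, $M/L\cong\FF_3^\times$ is abelian, so $M'\le L$. On the other hand, the image of $M'$ in $S_4$ is $S_4'=A_4$, so $M'Z(M)=L$. To conclude $M'=L$, I will show $Z(M)\le M'$. Since $Q_8=Q$ and $Q\le L$, it is enough to note that $-I$ lies in $Q'$, and $Q'\le M'$. This is the only step that needs a small check, and $Q_8'=Z(Q_8)=\{\pm I\}$ settles it.

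Finally, $M\ne1$ gives $h(M)=1+h(M/\Fit(M))=1+h(S_3)=1+2=3$. Alternatively, this is an instance of Proposition~\ref{prop:rank-one}, since $\Fit(M)=Q_8$ and $M/Q\cong S_3$ maps onto $\Out(Q_8)$ by Lemma~\ref{lem:embedding}. None of these steps is a real obstacle; the point needing most care is verifying that the inverse image of $K$ is quaternion rather than dihedral, which the unique-involution argument handles.
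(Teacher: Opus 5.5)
Your proof is correct and follows the same route as the paper. Both pass to $M/Z(M)\cong S_4$, bound $\Fit(M)$ using the nilpotent normal subgroups of $S_4$, obtain $M'Z(M)=L$ and then show $Z(M)\le M'$, and finish with $h(M)=1+h(S_3)$.

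The differences are local. You prove $Q\cong Q_8$ from the uniqueness of the involution $-I$ in $\SL(2,3)$, where the paper exhibits generating matrices. You get $Z(M)=Q'\le M'$ directly, which is shorter than the paper's detour through $L'=Q$ and $(M')'=L'$. One small wording slip: it is $M/Z(M)$, not $M$, that acts faithfully on the four lines.
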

\begin{proof}
The action on the four one-dimensional subspaces of $\FF_3^2$ gives
$M/Z\cong S_4$, with $Z=\{\pm I\}$, and $L/Z\cong A_4$. The matrices
\[
 \begin{pmatrix}0&1\\2&0\end{pmatrix},\qquad
 \begin{pmatrix}1&1\\1&2\end{pmatrix}
\]
generate a quaternion subgroup of order eight; its image modulo $Z$
is the unique subgroup of order four in $A_4$. Hence it is precisely
$Q$, which is normal in $M$, and $M/Q\cong S_4/V_4\cong S_3$.

The derived group of $L/Z\cong A_4$ is $Q/Z$, so $L'Z=Q$.
Every proper subgroup of $Q_8$ has image of order at most two modulo
its center. Consequently $L'=Q$. Similarly, $M'Z=L$ because
$(M/Z)'=A_4$. Taking derived groups of $M'Z=L$ gives
$Q=L'=(M')'\le M'$, and therefore $Z\le M'$ and $M'=L$.

The image of $\Fit(M)$ in $S_4$ is a nilpotent normal subgroup, hence
contained in $V_4$, so $\Fit(M)\le Q$; and $Q$ is nilpotent and normal.
Thus $\Fit(M)=Q$ and $h(M)=1+h(S_3)=3$.
\end{proof}

\begin{proposition}\label{prop:agl}
The group $B=\AGL(2,3)=\FF_3^2\rtimes\GL(2,3)$, of order $432$, is a
soluble primitive $\W$-group with
\[
 h(B)=4,\qquad \Fit(B)=\FF_3^2,\qquad B/B'\cong C_2,\qquad
 \Sigma(B)=\{2\}.
\]
It has the least possible order among soluble $\W$-groups of
Fitting height four.
\end{proposition}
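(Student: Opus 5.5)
The plan is to combine Lemma~\ref{lem:GL23} with Theorem~\ref{thm:affine-criterion} and Theorem~\ref{thm:primitive-height}, then compute the abelianization and $\Sigma(B)$ directly, and finally handle minimality by a short order count using Corollary~\ref{cor:primitive-height-four} and Theorem~\ref{thm:soluble-height}.

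\emph{Structure and the $\W$-property.} $M=\GL(2,3)$ acts faithfully and irreducibly on $V=\FF_3^2$. Irreducibility holds because $M$ permutes the four lines transitively. By Lemma~\ref{lem:GL23}, $\Fit(M)=Q\cong Q_8$, and every abelian subgroup of $Q_8$ is cyclic. Theorem~\ref{thm:affine-criterion} therefore gives $\W(B)=B$, $\Fit(B)=V$, and $h(B)=1+h(M)=4$. Primitivity follows because $M$ is a core-free maximal subgroup; equivalently, $V$ is the unique minimal normal subgroup.

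\emph{The abelianization.} We have $[V,M]=V$, since $-I$ acts as $-1$. By Lemma~\ref{lem:GL23}, $M'=\SL(2,3)$. Hence $B'=V\SL(2,3)$ and $B/B'\cong M/M'\cong C_2$ via the determinant.

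\emph{The set $\Sigma(B)$.} Since $\Sigma(B)\subseteq\pi(B/B')=\{2\}$, it suffices to exhibit one primitive quotient with a $2$-group stabilizer. Take the quotient $B/V\SL(2,3)\cong C_2$, viewed as $U\rtimes P$ with $U=C_2$? That has $P=1$, so it does not qualify. Instead use $B/VQ\cong S_3=C_3\rtimes C_2$: here $P=C_2$ acts faithfully and irreducibly on $U=C_3$. So $2\in\Sigma(B)$, and $\Sigma(B)=\{2\}$. The containment $\Sigma(B)\subseteq\pi(B/B')$ holds because $U\rtimes P$ with $P$ an $r$-group has abelianization mapping onto $P/P'\neq1$.

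\emph{Minimality.} This is the main obstacle. Let $G$ be a soluble $\W$-group of height four. By Theorem~\ref{thm:soluble-height}, $G$ has a quotient $V_0\rtimes M_0$ as in Corollary~\ref{cor:primitive-height-four}, so it suffices to show $|V_0\rtimes M_0|\ge432$. The group $M_0$ contains $Q_8$ and maps onto $S_3$ modulo $\Fit(M_0)$, so $|M_0|\ge|\Fit(M_0)|\cdot6\ge48$.

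The module side needs more care. By Theorem~\ref{thm:affine-criterion}, $p$ is odd and $Q_8$ acts fixed-point-freely, so $|V_0|\ge p^2$. If $p=3$, then $|V_0|\ge9$ and $|G|\ge9\cdot48=432$.

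If $p\ge5$, then $|V_0|\ge25$. The bound $25\cdot48=1200$ exceeds $432$, provided the dimension is at least $2$. This holds because $Q_8$ has no faithful one-dimensional representation, as it is nonabelian.

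If $p=3$ with $|M_0|=48$, then $M_0$ embeds in $\GL(2,3)$ of order $48$, so $M_0=\GL(2,3)$ and equality is achieved. Thus $432$ is the minimum.
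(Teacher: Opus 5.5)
Your proposal is correct and follows essentially the paper's proof: the affine criterion together with Lemma~\ref{lem:GL23} gives the $\W$-property, $\Fit(B)=V$ and the height; $V=[V,-I]\le B'$ with $M'=\SL(2,3)$ gives $B/B'\cong C_2$; the quotient $B/VQ_8\cong S_3$ gives $2\in\Sigma(B)$; and minimality follows from $|J|\ge48$ and $|U|\ge p^2\ge9$. Your case split $p=3$ versus $p\ge5$ is unnecessary, and the closing identification $M_0=\GL(2,3)$ would also need $\dim V_0=2$, but neither point affects the argument.
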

Here $\Sigma(B)$ is the set of primes defined before
Theorem~\ref{thm:main-wreath}; see also Lemma~\ref{lem:sigma}.
\begin{proof}
The natural module $V=\FF_3^2$ is faithful and irreducible, since
$\GL(2,3)$ is transitive on nonzero vectors. By Lemma~\ref{lem:GL23},
$\Fit(M)=Q_8$ for $M=\GL(2,3)$, and conjugation maps $M/Q_8\cong S_3$
isomorphically onto $\Out(Q_8)$, because $C_M(Q_8)=Z(M)\le Q_8$.
Corollary~\ref{cor:primitive-height-four} shows that $B$ is a
$\W$-group of height four, and $\Fit(B)=V$ by
Lemma~\ref{lem:affine-fitting}. Since $-I$ acts as $-1$ on $V$, we
have $V=[V,-I]\le B'$, and $B'/V=M'=\SL(2,3)$ by Lemma~\ref{lem:GL23};
hence $B/B'\cong\GL(2,3)/\SL(2,3)\cong C_2$. Lemma~\ref{lem:sigma}
gives $\Sigma(B)\subseteq\pi(B/B')=\{2\}$, and the quotient
$B/VQ_8\cong S_3$ is nonnilpotent with Fitting quotient $C_2$, so
$2\in\Sigma(B)$ by the same lemma.

For minimality, let $G$ be any soluble $\W$-group of height four.
By Theorem~\ref{thm:soluble-height}, it has a primitive quotient
$U\rtimes J$ as in Corollary~\ref{cor:primitive-height-four}.
Thus $|J|\ge8\cdot6=48$. If $U$ has characteristic $p$, then
$O_p(J)=1$ by faithfulness and irreducibility, so $p$ is odd because
$\Fit(J)$ contains a normal $Q_8$. The faithful restriction to
$Q_8$ cannot have dimension one, since $\GL(1,p)$ is abelian.
Consequently $|U|\ge3^2=9$, and $|G|\ge|U\rtimes J|\ge432$.
\end{proof}

\begin{proposition}\label{prop:sharp3}
Let $\zeta\in\FF_{29}^{\times}$ have order seven. On $V=\FF_{29}^3$
put
\[
 a=\operatorname{diag}(\zeta,\zeta^2,\zeta^4),
 \qquad b(e_1)=e_2,\quad b(e_2)=e_3,\quad b(e_3)=e_1,
\]
and $M=\langle a,b\rangle$. Then $M\cong C_7\rtimes C_3$, and
$B=V\rtimes M$, of order $21\cdot29^3$, is a primitive $A$-group of odd
order with
\[
 \W(B)=B,\qquad h(B)=3,\qquad \Fit(B)=V,\qquad B/B'\cong C_3,\qquad
 \Sigma(B)\subseteq\{3\}.
\]
\end{proposition}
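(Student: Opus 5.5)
The proof is a sequence of direct verifications followed by applications of Theorem~\ref{thm:affine-criterion} and Lemma~\ref{lem:affine-fitting}. Since $\zeta$ has order $7$, $a$ has order $7$, and $b$ has order $3$. Conjugation gives $b^{-1}ab=\operatorname{diag}(\zeta^4,\zeta,\zeta^2)$ or its cyclic shift, which equals $a^2$ or $a^4$. Because $2$ has order $3$ modulo $7$, this conjugation is a nontrivial automorphism of $\langle a\rangle$. Hence $M=\langle a\rangle\rtimes\langle b\rangle\cong C_7\rtimes C_3$, nonabelian of order $21$. It is an $A$-group of odd order, and so is $B$, since its Sylow $29$-subgroup is $V$ and its other Sylow subgroups have prime order. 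Also $|B|=21\cdot29^3$.

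Faithfulness is immediate because $M$ is generated by matrices and $M\le\GL(3,29)$. For irreducibility, the eigenvalues $\zeta,\zeta^2,\zeta^4$ of $a$ are distinct elements of $\FF_{29}$, so the $\langle a\rangle$-invariant subspaces are exactly the sums of coordinate lines $\FF_{29}e_i$. The only such sums permuted by $b$ are $0$ and $V$, so $V$ is irreducible. Then $\Fit(M)=\langle a\rangle\cong C_7$, since $M$ is nonabelian of order $21$. This is cyclic, so condition (iv) of Theorem~\ref{thm:affine-criterion} holds and $\W(B)=B$. The same theorem gives $\Fit(B)=V$ and $h(B)=1+h(M)=1+2=3$; primitivity follows from $V$ being faithful irreducible with $M$ a complement (a core-free maximal subgroup).

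For $B/B'$, I show $V\le B'$. Since $C_V(a)=0$, the map $a-1$ is invertible and $V=[V,a]\le B'$. Also $M'=\langle a\rangle$, so $B'=V\langle a\rangle$ and $B/B'\cong M/M'\cong C_3$. Finally, $\Sigma(B)\subseteq\pi(B/B')=\{3\}$ by the remark after Theorem~\ref{thm:main-wreath} (Lemma~\ref{lem:sigma}). If one prefers a direct argument, a primitive quotient $U\rtimes P$ with $P\ne1$ an $r$-group is nonnilpotent, and its abelianization maps onto $P/P'\ne1$ because $U=[U,P]$. Thus $r$ divides $|B/B'|=3$.

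The main obstacle is only the bookkeeping of the conjugation $b^{-1}ab$ and the irreducibility argument, both of which are routine.
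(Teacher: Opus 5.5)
Your proposal is correct and follows the paper's proof essentially step for step. You use the relation $b^{-1}ab=a^{2}$ (or $a^{4}$, depending on convention), irreducibility from the distinct eigenvalues of $a$ together with $b$ permuting the coordinate lines cyclically, Theorem~\ref{thm:affine-criterion} applied to the cyclic $\Fit(M)=\langle a\rangle$, the inclusion $V=[V,a]\le B'$ with $M'=\langle a\rangle$, and Lemma~\ref{lem:sigma} for $\Sigma(B)$; your justification of primitivity ($M$ is a core-free maximal subgroup) is a small addition that the paper leaves implicit.
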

\begin{proof}
Such $\zeta$ exists because $7\mid28$. We have $a^7=b^3=1$ and
$b^{-1}ab=a^2$, so $M\cong C_7\rtimes C_3$ is nonabelian with
$\Fit(M)=\langle a\rangle$ cyclic. The eigenvalues
$\zeta,\zeta^2,\zeta^4$ of $a$ are distinct, so the $a$-invariant
subspaces are spanned by subsets of $\{e_1,e_2,e_3\}$; as $b$
permutes the $e_i$ transitively, $V$ is irreducible, and it is
faithful. Theorem~\ref{thm:affine-criterion} gives $\W(B)=B$ and
$h(B)=1+h(M)=3$, and $\Fit(B)=V$. Since $a-1$ is invertible on $V$,
$V=[V,a]\le B'$; also $[a,b]=a^{-1}a^b=a$, so $B'=V\langle a\rangle$
and $B/B'\cong C_3$, whence $\Sigma(B)\subseteq\{3\}$ by
Lemma~\ref{lem:sigma}. The Sylow subgroups of $B$ are $V$ and cyclic
groups of orders $7$ and $3$.
\end{proof}

Propositions~\ref{prop:agl} and~\ref{prop:sharp3} show that the
coefficients $4$ and $3$ in Corollary~\ref{cor:soluble-length} cannot
be lowered, the latter also for soluble $A$-groups without an order
restriction.

\begin{theorem}\label{thm:avoid-primes}
For every finite set $\Pi$ of primes, there is a primitive odd-order
soluble $A$-group $B$ such that
\[
 \pi(B)\cap\Pi=\varnothing,\qquad \W(B)=B,\qquad h(B)=3.
\]
More explicitly, choose distinct odd primes $s,r,t$ outside $\Pi$
with $s\mid r-1$ and $r\mid t-1$. Then one can take
$B=\mathbb F_t^{\,s}\rtimes(C_r\rtimes C_s)$, and $B/B'\cong C_s$.
\end{theorem}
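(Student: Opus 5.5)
The plan is to build $B=V\rtimes M$ with $M=C_r\rtimes C_s$ acting monomially on $V=\FF_t^{\,s}$, exactly as in Proposition~\ref{prop:sharp3} (which is the case $s=3$, $r=7$, $t=29$), and then apply Theorem~\ref{thm:affine-criterion}. First I need primes $s,r,t$ outside $\Pi$, odd and distinct, with $s\mid r-1$ and $r\mid t-1$. Fix an odd prime $s\notin\Pi$. By Dirichlet's theorem on primes in arithmetic progressions there are infinitely many primes $r\equiv1\pmod s$, so one can choose such an $r$ outside $\Pi\cup\{2,s\}$. Applying Dirichlet again gives a prime $t\equiv1\pmod r$ outside $\Pi\cup\{2,s,r\}$. (The cyclotomic special case of Dirichlet, namely primes $\equiv1\pmod m$, suffices and has an elementary proof.) Note also that $t\ne s$ and $t\ne r$, since $t>r>s$.

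Next I construct $M$. Let $\zeta\in\FF_t^\times$ have order $r$, which is possible since $r\mid t-1$. Choose $k$ of multiplicative order $s$ modulo $r$, which is possible since $s\mid r-1$. Put
\[
a=\operatorname{diag}(\zeta,\zeta^{k},\zeta^{k^2},\dots,\zeta^{k^{s-1}}),
\]
and let $b$ be the cyclic permutation $e_i\mapsto e_{i+1}$, with indices mod $s$. Then $b^{-1}ab=a^{k}$ or $a^{k^{-1}}$, depending on convention. Hence $M=\langle a,b\rangle\cong C_r\rtimes C_s$, and it is nonabelian because $k\not\equiv1$.

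The diagonal entries $\zeta^{k^i}$ are pairwise distinct, since $k$ has order $s$ mod $r$. So the $a$-invariant subspaces are spanned by subsets of the basis, and transitivity of $b$ gives irreducibility. The action is faithful, since $a$ acts nontrivially, every nontrivial normal subgroup of $M$ contains $\langle a\rangle$, and $b$ does not act trivially. Also $\Fit(M)=\langle a\rangle$ is cyclic, since $M$ is nonabelian of order $rs$ and $C_M(a)=\langle a\rangle$.

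Theorem~\ref{thm:affine-criterion} then gives $\W(B)=B$ and $h(B)=1+h(M)=3$, and primitivity comes with the faithful irreducible affine setup. The order $|B|=t^s rs$ is odd, and $\pi(B)=\{s,r,t\}$ avoids $\Pi$. For the $A$-group property, the Sylow subgroups are $V$ (elementary abelian), $\langle a\rangle$, and a cyclic group of order $s$; this uses $s,r,t$ distinct. For the abelianization, $a-1$ is invertible on $V$ because no eigenvalue equals $1$. Hence $V=[V,a]\le B'$. Moreover $[a,b]$ is a nontrivial power of $a$, so $B'=V\langle a\rangle$ and $B/B'\cong C_s$.

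The only genuinely nonformal step is the existence of the primes, which is Dirichlet's theorem. Everything else copies Proposition~\ref{prop:sharp3}. One point needs checking: the eigenvalues of $a$ are distinct exactly because $k$ has order $s$ modulo $r$, and this is what makes irreducibility follow.
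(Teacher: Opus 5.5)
Your proposal is correct and follows the paper's proof: the same Dirichlet choice of $s,r,t$, the same group $M=C_r\rtimes C_s$ acting on $\FF_t^{\,s}$, the same appeal to Theorem~\ref{thm:affine-criterion}, and the same computation $B'=V\langle a\rangle$. The only difference is cosmetic: the paper takes the module to be $\Ind_Q^M Y$ for a faithful one-dimensional module $Y$ of $Q=\langle a\rangle$, whereas you write this induced module in its monomial basis (diagonal $a$, permutation matrix $b$), exactly as in Proposition~\ref{prop:sharp3}.
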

\begin{proof}
Choose an odd prime $s$ outside $\Pi$. Dirichlet's theorem gives a
prime $r\equiv1\pmod{s}$ outside $\Pi\cup\{s\}$, and then a prime
$t\equiv1\pmod r$ outside $\Pi\cup\{s,r\}$.

Let $M=\langle a\rangle\rtimes\langle b\rangle\cong C_r\rtimes C_s$
with faithful action, and put $Q=\langle a\rangle=\Fit(M)$. Since
$r\mid t-1$, choose a faithful one-dimensional $\mathbb F_tQ$-module $Y$
and put $V=\Ind_Q^M Y$, of dimension $s$. Restricted to $Q$, it is the
sum of $s$ pairwise nonisomorphic nontrivial one-dimensional modules,
permuted transitively by $M$; hence $V$ is faithful and irreducible.
Theorem~\ref{thm:affine-criterion} gives $\W(B)=B$ and $h(B)=3$ for
$B=V\rtimes M$. As in Proposition~\ref{prop:sharp3}, $B'=VQ$ and
$B/B'\cong C_s$. The Sylow $t$-subgroup is $V$, and Sylow $r$- and
$s$-subgroups are cyclic, so $B$ is an odd-order $A$-group with
$\pi(B)=\{s,r,t\}$.
\end{proof}

\section{Soluble regular wreath products}
\label{sec:soluble-wreath}
To stack $\W$-groups we use regular wreath products $B\wr E$. We want
to know when the base $B^E$ is exactly $\W(B\wr E)$, because then the
length increases by one and the heights add
(Theorem~\ref{thm:main-wreath}). Section~\ref{subsec:wreath-heights}
computes Fitting subgroups and heights of wreath products, and shows
that $\W(B\wr E)\le B^E$ when $B$ is not nilpotent.
Section~\ref{subsec:sigma} introduces the set of primes $\Sigma(B)$,
which measures the obstruction to equality, and
Section~\ref{subsec:base-criterion} proves the base criterion.

Throughout this section $B$ and $E$ are finite soluble groups and
$B\wr E=B^E\rtimes E$ is the regular wreath product. We write $B_i$
for the coordinate subgroup of $B^E$ at $i\in E$.

\subsection{Fitting subgroups and heights}
\label{subsec:wreath-heights}
\begin{lemma}\label{lem:wreath-fitting}
Let $1\ne D$ and $E$ be finite soluble groups, and set $X=D\wr E$,
$A=D^E$. If $D$ is not a group of prime-power order, then
\[
 \Fit(X)=\Fit(D)^E.
\]
If $D$ is a $p$-group, then
\[
 \Fit(X)=D^E\rtimes O_p(E).
\]
\end{lemma}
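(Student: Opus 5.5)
The plan is to locate $\Fit(X)$ by first computing $\Fit(X)\cap A$ and then the image $\Fit(X)A/A$ in $E$. The two cases differ only in that image.

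\emph{Intersection with the base.} Since $A\trianglelefteq X$, we have $\Fit(X)\cap A=\Fit(A)$. The Fitting subgroup of a direct product is the product of the Fitting subgroups, so $\Fit(A)=\Fit(D)^E$. In the $p$-group case this already equals $A$.

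\emph{The image in $E$.} Put $F=\Fit(X)$ and let $\bar F$ be its image in $E$, a nilpotent normal subgroup. The key observation is how a nontrivial element $e\in E$ acts on the coordinates. Take $y=ae\in F$ with $a\in A$ and $1\ne e\in E$. For any $d\in D$, let $d_1$ be $d$ placed in the coordinate $1$. Then $[d_1,y]\in F\cap A$. Its coordinates are $d^{-1}$ at $1$ and a conjugate of $d$ at $e^{\pm1}\ne1$. Iterating commutators $[d_1,y,\dots,y]$, the nilpotency of $F$ forces these to become trivial.

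Here is the sub-step I expect to be the main obstacle. The cleanest version: a $q$-element $y$ of $F$ centralizes the Hall $q'$-part of $F$. I would take $e$ of prime order $q$ in $\bar F$ and lift it to a $q$-element $y\in F$. The subgroup $\Fit(D)^E\cap F$ contains $O_r(D)^E$ for every prime $r$. For $r\ne q$ the element $y$ must centralize $O_r(D)^E$. But $y$ moves coordinates nontrivially, so it centralizes only diagonal-type elements. Hence $O_r(D)=1$ for all $r\ne q$, that is, $\Fit(D)$ is a $q$-group. With $C_D(\Fit(D))\le\Fit(D)$ (S3) this gives no immediate contradiction, so I would instead apply the argument inside $X$ to $[D^E,y]$. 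Elements $[d_1,y]$ with $d$ of order prime to $q$ lie in $F$. So $D$ is generated by elements $d$ with $d_1$ in the subgroup generated by $F$-elements $[d_1,y]$. Alternatively, pass to $\Fit(X/\Fit(D)^E)$: work in $\bar X=(D/\Fit(D))\wr E$ and show that $\Fit(\bar X)\cap\bar A$ would contain a nontrivial normal subgroup.

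\emph{A simpler route.} I would actually organize the proof as follows. The subgroup $F$ normalizes, and is nilpotent with, $A$. Since $F\trianglelefteq X$, the commutator $[A,F]\le F\cap A=\Fit(D)^E$. If $\bar F\ne1$, pick $1\ne e\in\bar F$ and $d\in D$. Then $[d_1,y]\in\Fit(D)^E$. Its coordinate at $1$ is $d^{-1}$ times something trivial, because $e\ne1$ and $a$ only conjugates. Hence $d\in\Fit(D)$, so $D=\Fit(D)$ is nilpotent. Now nilpotency of $F\ge A$ together with $\bar F\ne1$ acting: a $q$-element $y$ of $F$ must centralize the $q'$-part $O_{q'}(A)$. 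But it permutes the coordinates nontrivially, so $O_{q'}(D)=1$ and $D$ is a $q$-group. This proves the first case.

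\emph{The $p$-group case.} If $D$ is a $p$-group, the image $\bar F$ is a $p$-group, so $\bar F\le O_p(E)$. Conversely, $A\rtimes O_p(E)$ is a normal $p$-subgroup of $X$, hence nilpotent and contained in $F$. Together with $F\cap A=A$, this gives $\Fit(X)=D^E\rtimes O_p(E)$.
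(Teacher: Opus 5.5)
Your final ``simpler route'' is correct and uses the same two ingredients as the paper. The first is the coordinate projection of $[u,x]$, which gives $u^{-1}$ at the coordinate of $u$. The second is that an element moving coordinates cannot centralize a nontrivial coordinate subgroup. The paper applies the projection step to each $O_r(X)$ rather than to $\Fit(X)$, which yields ``$D$ is an $r$-group'' in one step instead of your two steps ($D$ nilpotent, then $D$ a $q$-group). You should delete the exploratory middle paragraph. In the $p$-group case, add the sentence that $\bar F$ is a $p$-group because your $q$-element argument forces every prime dividing $|\bar F|$ to equal $p$.
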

\begin{proof}
If $x\in O_r(X)$ has nonidentity top component, then
$[A,x]\le O_r(X)\cap A$, and this is an $r$-group. Since $E$ acts
regularly, $x$ moves every coordinate. For $u\in D$ placed at a
coordinate $i$, the commutator $[u,x]=u^{-1}u^x$ has component
$u^{-1}$ at $i$, because $u^x$ is supported at a different coordinate.
Thus the $r$-group $[A,x]$ projects onto $D$ at every coordinate,
and $D$ is an $r$-group. If $D$ is not of prime-power order, all $O_r(X)$
therefore lie in $A$, giving $\Fit(X)\le\Fit(A)=\Fit(D)^E$.
The reverse inclusion holds because $\Fit(A)$ is characteristic in $A$
and hence nilpotent normal in $X$.

If $D$ is a $p$-group, the full preimage $A\rtimes O_p(E)$ is a
normal $p$-subgroup and equals $O_p(X)$. For $r\ne p$, the normal
subgroups $O_r(X)$ and $A$ centralize each other. Regularity and
$D\ne1$ imply $C_X(A)\le A$: an element moving a coordinate cannot
centralize a nonidentity element supported there. Thus $O_r(X)\le A$
and $O_r(X)=1$. This proves the second formula.
\end{proof}

\begin{theorem}\label{thm:wreath-height}
Let $B,E$ be finite soluble groups with $B\ne1$, put $b=h(B)$, and set
$C=B/\Fit_{b-1}(B)$. Then
\[
 h(B\wr E)=
 \begin{cases}
 b+h(E),& C\text{ is not of prime-power order},\\
 b+h(E/O_p(E)),& C\text{ is a }p\text{-group}.
 \end{cases}
\]
\end{theorem}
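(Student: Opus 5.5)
The plan is to induct on $b=h(B)$, using Lemma~\ref{lem:wreath-fitting} at each step to peel off one Fitting layer of $X=B\wr E$. The basic observation is this. For $N$ characteristic in $B$, the subgroup $N^E$ is normal in $X$, since it is invariant under $B^E$ and under the coordinate permutations by $E$. Moreover
\[
X/N^E\cong(B/N)\wr E
\]
canonically.

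\emph{Base case $b=1$.} Here $B$ is nilpotent and $C=B/\Fit_0(B)=B$.
\begin{itemize}
\item If $C$ is not of prime-power order, Lemma~\ref{lem:wreath-fitting} with $D=B$ gives $\Fit(X)=\Fit(B)^E=B^E$. Hence $X/\Fit(X)\cong E$ and $h(X)=1+h(E)$.
\item If $C$ is a $p$-group, the same lemma gives $\Fit(X)=B^E\rtimes O_p(E)$. Then $X/\Fit(X)\cong E/O_p(E)$, so $h(X)=1+h(E/O_p(E))$.
\end{itemize}
In both cases this is the formula, because $h(Y)=1+h(Y/\Fit(Y))$ for $Y\ne1$. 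If $E/O_p(E)=1$ the formula reads $h(X)=1$, which is consistent: $X$ is then a $p$-group.

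\emph{Inductive step $b\ge2$.} Now $B$ is not nilpotent, so in particular it is not of prime-power order. Lemma~\ref{lem:wreath-fitting} gives $\Fit(X)=\Fit(B)^E$, and therefore
\[
X/\Fit(X)\cong\bar B\wr E,\qquad \bar B=B/\Fit(B).
\]
Then $\bar B\ne1$ and $h(\bar B)=b-1$. Its upper Fitting series is the image of that of $B$, that is, $\Fit_i(\bar B)=\Fit_{i+1}(B)/\Fit(B)$. Consequently
\[
\bar B/\Fit_{b-2}(\bar B)\cong B/\Fit_{b-1}(B)=C.
\]
So the group playing the role of $C$ for $\bar B$ is the same $C$, and the case distinction is unchanged. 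The inductive hypothesis gives $h(\bar B\wr E)=(b-1)+h(E)$, respectively $(b-1)+h(E/O_p(E))$. Hence $h(X)=1+h(\bar B\wr E)$ is as claimed.

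There is no substantial obstacle once Lemma~\ref{lem:wreath-fitting} is available. The points to check carefully are the identification of $X/N^E$ with $(B/N)\wr E$ as a regular wreath product, and the compatibility of the upper Fitting series of $B$ with passage to $B/\Fit(B)$. Both are routine.
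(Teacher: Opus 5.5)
Your proof is correct and follows the paper's argument. The paper applies Lemma~\ref{lem:wreath-fitting} repeatedly, via $(B\wr E)/\Fit_i(B)^E\cong(B/\Fit_i(B))\wr E$, to obtain $\Fit_i(B\wr E)=\Fit_i(B)^E$ for $i\le b-1$, and then handles the nilpotent quotient $C\wr E$ directly; your induction on $b$, with the identification $\bar B/\Fit_{b-2}(\bar B)\cong C$, is the same argument arranged recursively.
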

\begin{proof}
For $0\le i<b-1$, the group $B/\Fit_i(B)$ has height at least two,
and is not of prime-power order. Repeated application of
Lemma~\ref{lem:wreath-fitting}, together with the natural
isomorphisms $(B\wr E)/\Fit_i(B)^E\cong(B/\Fit_i(B))\wr E$, gives
\[
 \Fit_i(B\wr E)=\Fit_i(B)^E\qquad(0\le i\le b-1).
\]
The remaining quotient is $C\wr E$, with $C$ nontrivial nilpotent.
If $C$ is not of prime-power order, its Fitting subgroup is the base
and its quotient by that base is $E$, giving height $1+h(E)$.
If $C$ is a $p$-group, its Fitting quotient is $E/O_p(E)$, giving
height $1+h(E/O_p(E))$. Adding the first $b-1$ layers proves the result.
\end{proof}

\begin{lemma}\label{lem:base-upper}
Let $B$ be a nonnilpotent group, $E$ any group, $G=B\wr E$ and
$N=B^E$. Then $N_G(B_i^{\N})=N$ for every $i\in E$ when $E\ne1$, and
\[
 \W(G)\le N,\qquad \ellW(G)\ge1+\ellW(E).
\]
\end{lemma}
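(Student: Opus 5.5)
First I compute $N_G(B_i^{\N})$. The coordinate subgroup $B_i$ is a direct factor of the normal subgroup $N=B^E$, so $B_i\trianglelefteq N\trianglelefteq G$ and $B_i\sn G$. Its residual is $B_i^{\N}=(B^{\N})_i$, the copy of $B^{\N}$ at coordinate $i$; this is nontrivial because $B$ is not nilpotent. Every element of $N$ normalizes $B_i^{\N}$, since $B_i^{\N}$ is characteristic in $B_i\trianglelefteq N$. Hence $N\le N_G(B_i^{\N})$. Conversely, write $g=ne$ with $n\in N$ and $1\ne e\in E$. Then $(B_i^{\N})^g=(B_i^{\N})^e=B_{ie}^{\N}$, which is supported at the coordinate $ie\ne i$ because $E$ acts regularly. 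Since $B_i^{\N}\ne1$, the two subgroups are distinct, so $g\notin N_G(B_i^{\N})$. Therefore $N_G(B_i^{\N})=N$ when $E\ne1$.

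The containment $\W(G)\le N$ follows at once. If $E\ne1$, then $\W(G)\le N_G(B_i^{\N})=N$ for any $i$, by the definition of $\W(G)$ as an intersection over subnormal subgroups. If $E=1$, then $G=N$ and there is nothing to prove.

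For the length inequality, I use the epimorphism $f:G\twoheadrightarrow G/N\cong E$ together with the shift identity. Let $m=\ellW(G)$. Since $\W_1(G)\le N$, the quotient $G/\W_1(G)$ maps onto $G/N\cong E$. Lemma~\ref{lem:series-shift} gives $\ellW(G/\W_1(G))=m-1$ (here $m\ge1$ because $G\ne1$), and it also says that length does not increase under epimorphisms. Hence $\ellW(E)\le m-1$, that is, $\ellW(G)\ge1+\ellW(E)$.

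All steps are routine. The only point needing care is the regularity argument: it shows that a nontrivial top component moves the support of $B_i^{\N}$, and it uses $B_i^{\N}\ne1$, which is exactly where nonnilpotency of $B$ enters.
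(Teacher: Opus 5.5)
Your proposal is correct and follows essentially the same route as the paper. The paper likewise gets $N_G(B_i^{\N})=N$ from the regularity of $E$ and $B_i^{\N}\ne1$, and then applies Lemma~\ref{lem:series-shift} to the surjection $G/\W(G)\twoheadrightarrow G/N\cong E$ to obtain the length inequality.
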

\begin{proof}
If $E=1$, then $G=N=B\ne1$ and both assertions are clear. Let $E\ne1$.
Each $B_i$ is normal in $N$ and hence subnormal in $G$, and
$B_i^{\N}\ne1$ is normalized by $N$. An element $xe$, with $x\in N$
and $1\ne e\in E$, maps $B_i^{\N}$ to $B_{ie}^{\N}$, which is supported
at a different coordinate. Hence $N_G(B_i^{\N})=N$ and $\W(G)\le N$.
Thus $G/\W(G)$ maps onto $G/N\cong E$, and
Lemma~\ref{lem:series-shift} gives
$\ellW(G)=1+\ellW(G/\W(G))\ge1+\ellW(E)$.
\end{proof}

\subsection{The prime obstruction in the base}
\label{subsec:sigma}
The obstruction to $\W(B\wr E)=B^E$ comes from the primes $r$ for which
$B$ has a nonnilpotent quotient whose Fitting quotient is an $r$-group
(Lemma~\ref{lem:sigma}). We collect them in a set. Define
\[
\Sigma(B)=\left\{r:\begin{array}{l}
B\text{ has a quotient }V\rtimes P,\ V\ne1\text{ elementary abelian},\\
1\ne P\text{ an }r\text{-group acting faithfully and irreducibly on }V
\end{array}\right\}.
\]
The characteristic of $V$ in this definition differs from $r$, since a
nontrivial $r$-group has only trivial irreducible modules in
characteristic $r$.

\begin{lemma}\label{lem:sigma}
For a finite soluble group $B$ and a prime $r$, one has $r\in\Sigma(B)$
if and only if there is $D\unlhd B$ such that $Q=B/D$ is nonnilpotent
and $Q/\Fit(Q)$ is an $r$-group. Moreover,
\[
 \Sigma(B)\subseteq\pi(B/B').
\]
\end{lemma}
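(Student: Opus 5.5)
The plan is to prove the equivalence in two directions and then derive $\Sigma(B)\subseteq\pi(B/B')$.

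\emph{From $\Sigma(B)$ to a quotient.} Suppose $r\in\Sigma(B)$, so $B$ has a quotient $Q=V\rtimes P$ with $1\ne P$ an $r$-group acting faithfully and irreducibly on $V\ne1$. As noted before the statement, $V$ has characteristic $p\ne r$, so $O_p(P)=1$. Lemma~\ref{lem:affine-fitting} then gives $\Fit(Q)=V$. Hence $Q/\Fit(Q)\cong P$ is an $r$-group. Moreover $Q$ is nonnilpotent, since $\Fit(Q)=V<Q$.

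\emph{From a quotient to $\Sigma(B)$.} Suppose $Q=B/D$ is nonnilpotent and $Q/\Fit(Q)$ is an $r$-group. I would pass to a primitive quotient of $Q$ that is still nonnilpotent. Since $Q$ is nonnilpotent, $h(Q)\ge2$. By Lemma~\ref{lem:primitive-quotients} there is a maximal subgroup $L$ of $Q$ with $Y=Q/\Core_Q(L)$ of height at least $2$. This $Y$ has the form $V\rtimes J$ with $J$ faithful and irreducible on $V$, and $h(Y)\ge2$ forces $J\ne1$.

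It remains to see that $J$ is an $r$-group. Fitting quotients behave well under epimorphisms: the image of $\Fit(Q)$ in $Y$ is a nilpotent normal subgroup, so it lies in $\Fit(Y)$. Therefore $Y/\Fit(Y)$ is a quotient of $Q/\Fit(Q)$, hence an $r$-group. By Lemma~\ref{lem:affine-fitting}, $\Fit(Y)=V$, so $J\cong Y/V$ is an $r$-group. Then $Y$ is a quotient of $B$ of the required form, and $r\in\Sigma(B)$.

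\emph{The inclusion $\Sigma(B)\subseteq\pi(B/B')$.} Take $Q=V\rtimes P$ as in the definition. Since $P\ne1$ is an $r$-group, it is nilpotent, so $P'<P$ and $P/P'$ is a nontrivial abelian $r$-group. Now $Q$ maps onto $P$, so $Q/Q'$ maps onto $P/P'$, and $r\in\pi(Q/Q')$. Because $Q$ is a quotient of $B$, $Q/Q'$ is a quotient of $B/B'$, which gives $r\in\pi(B/B')$.

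The only step needing care is the second direction. There one must choose the primitive quotient so that the point stabilizer is nontrivial, which the height formula~\eqref{eq:primitive-max} guarantees. One must also confirm that $Y/\Fit(Y)$ is an image of $Q/\Fit(Q)$, which follows from the containment of the image of $\Fit(Q)$ in $\Fit(Y)$.
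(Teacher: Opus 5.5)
Your proof is correct and follows essentially the same route as the paper: pass to a primitive quotient $Y=V\rtimes J$, note that $Y/\Fit(Y)=Y/V$ is an image of the $r$-group $Q/\Fit(Q)$, and obtain $\Sigma(B)\subseteq\pi(B/B')$ from a nontrivial abelian $r$-quotient. The only difference is how $J\ne1$ is ensured. You choose $L$ via the height formula~\eqref{eq:primitive-max} so that $h(Y)\ge2$, whereas the paper takes a nonnormal maximal subgroup $M$ (one exists because a finite group whose maximal subgroups are all normal is nilpotent), which gives $J=M/\Core_Q(M)\ne1$ directly.
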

\begin{proof}
If $Q=V\rtimes P$ is as in the definition of $\Sigma(B)$, then
$\Fit(Q)=V$ by Lemma~\ref{lem:affine-fitting}, so $Q$ is nonnilpotent
and $Q/\Fit(Q)\cong P$ is an $r$-group. This proves the forward
implication.

Conversely, a finite nonnilpotent group $Q$ has a nonnormal maximal
subgroup $M$, since a finite group whose maximal subgroups are all
normal is nilpotent. Set $L=\Core_Q(M)$. By
Lemma~\ref{lem:primitive-quotients}, $Q/L=V\rtimes P$ with $P=M/L$
faithful and irreducible on $V$, and $P\ne1$ because otherwise $M=L$
would be normal. The image of $\Fit(Q)$ lies in $\Fit(Q/L)=V$, so $P$
is a quotient of the $r$-group $Q/\Fit(Q)$. Thus $r\in\Sigma(B)$.

Finally, a nontrivial $r$-group $P$ has a quotient of order $r$.
Since $B$ maps onto $P$, it has an abelian quotient of order $r$.
This proves the displayed containment.
\end{proof}

\subsection{The base criterion}
\label{subsec:base-criterion}
\begin{theorem}\label{thm:wreath-base}
Let $B,E$ be finite soluble groups such that $\W(B)=B$ and $\res B\ne1$.
Put $G=B\wr E$ and $N=B^E$. Then the following are equivalent:
\begin{enumerate}
\item $\W(G)=N$;
\item $\Sigma(B)\cap\pi(\Fit(E))=\varnothing$;
\item $O_r(E)=1$ for every $r\in\Sigma(B)$.
\end{enumerate}
Whenever they hold,
\[
 \ellW(G)=1+\ellW(E),\qquad h(G)=h(B)+h(E).
\]
\end{theorem}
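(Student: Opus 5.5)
The equivalence (2)$\Leftrightarrow$(3) is formal: $\pi(\Fit(E))$ is exactly the set of primes $r$ with $O_r(E)\ne1$. The consequences are also quick once (1) holds. Lemma~\ref{lem:series-shift} gives $\ellW(G)=1+\ellW(G/N)=1+\ellW(E)$. For the height, put $b=h(B)\ge2$, which holds since $\res B\ne1$, and $C=B/\Fit_{b-1}(B)$. If $C$ is a $p$-group, then $B/\Fit_{b-2}(B)$ is nonnilpotent with Fitting quotient $C$. Lemma~\ref{lem:sigma} then gives $p\in\Sigma(B)$, and (3) gives $O_p(E)=1$. In either case Theorem~\ref{thm:wreath-height} yields $h(G)=b+h(E)$. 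So the real content is (1)$\Leftrightarrow$(3), and we already know $\W(G)\le N$ from Lemma~\ref{lem:base-upper}.

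For (1)$\Rightarrow$(3), argue by contraposition. Suppose $r\in\Sigma(B)$ and $1\ne e\in O_r(E)$, and let $B\twoheadrightarrow V\rtimes P$ be as in the definition of $\Sigma(B)$. Then $G$ maps onto $Q=(V\rtimes P)\wr E$, and the base $N$ maps onto the base $(V\rtimes P)^E$. In $Q$ consider $H=V^E\langle e\rangle$.
\begin{itemize}
\item $H$ is subnormal in $Q$. The subgroup $V^EP^EO_r(E)$ is normal in $Q$, since it is the preimage of the normal subgroup $P^EO_r(E)$ of $P\wr E$. Modulo $V^E$ it is an $r$-group, so $\langle e\rangle$, and hence $V^E\langle e\rangle$, is subnormal in it.
\item $H^{\N}=[V^E,e]$. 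The characteristic of $V$ differs from $r$, so the coprime-action argument from the proof of Theorem~\ref{thm:affine-criterion} applies.
\item $[V^E,e]$ is not normalized by $P^E$. This subspace consists of the vectors whose coordinate sums vanish on each $\langle e\rangle$-orbit. Acting by some $g\in P$ at one coordinate changes such a sum by $v^g-v\ne0$ for a suitable $v$.
\end{itemize}
Now lift $H$ to its preimage $\tilde H\sn G$. By Lemma~\ref{lem:residual-basics}(ii), $\tilde H^{\N}$ maps onto $H^{\N}$. Hence $N$ does not normalize $\tilde H^{\N}$, and $\W(G)\ne N$.

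For (3)$\Rightarrow$(1) we must show that $N$ normalizes $H^{\N}$ for every $H\sn G$. There are two cases.
\begin{itemize}
\item If $H\le N$, then $H\sn N=B^E$. Iterating Theorem~\ref{thm:W-product} gives $\W(B^E)=\W(B)^E=B^E$, so $N$ normalizes $H^{\N}$.
\item Otherwise $HN/N$ is a nontrivial subnormal subgroup of $E$. By Lemma~\ref{lem:subnormal-fitting} it meets $\Fit(E)$, so $H$ contains an element $y=xe$ with $x\in N$ and $e\in O_r(E)$ of order $r$. By (3), $r\notin\Sigma(B)$.
\end{itemize}
In the second case the goal is to prove $(\res B)^E\le H^{\N}$. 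Granting that, pass to $\bar G=G/(\res B)^E\cong(B/\res B)\wr E$. The image of $N$ is nilpotent and normal in $\bar G$, so it lies in $\Fit(\bar G)$. By Lemma~\ref{lem:Fstar} it normalizes $(H/(\res B)^E)^{\N}=H^{\N}/(\res B)^E$, and so $N$ normalizes $H^{\N}$.

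The main obstacle is the containment $(\res B)^E\le H^{\N}$. The first tools are Lemma~\ref{lem:residual-basics}(iii), which gives $[B_i^{\N},{}_d\,y]\le H$, and \eqref{eq:stable}, which forces what lies in $H$ into $\gamma_\infty(H)$. These should show that $K=H\cap(\res B)^E$ is normalized by $N$ and projects onto each coordinate. The plan is to take a minimal counterexample. Factor out a maximal $N$-invariant piece, so that a coordinate of $(\res B)^E$ is not reached, and reduce to a quotient of $B$ that is primitive with residual $V$. There the action of $y^{r^k}$ on a block of $\langle e\rangle$-orbits is by the coordinate product of the components of $x$. The failure of $[V^{\langle e\rangle},{}_d y]$ to generate the block should force the point stabilizer to act through an $r$-group modulo its Fitting subgroup. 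Lemma~\ref{lem:sigma} would then put $r$ in $\Sigma(B)$, a contradiction. Making this reduction precise, in particular keeping the hypothesis $\W(B)=B$ under control in the quotient, is the step I expect to require the most care.
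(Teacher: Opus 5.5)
Your arguments for (2)$\Leftrightarrow$(3), for (1)$\Rightarrow$(3) (via $V^E\langle e\rangle$ and the orbit-sum description of $[V^E,e]$), for the length and height formulas, and for the final step of (3)$\Rightarrow$(1) (passing to $G/N^{\N}$ and applying Lemma~\ref{lem:Fstar}) all agree with the paper. The gap is the containment $(B^{\N})^E\le H^{\N}$ for $H\sn G$ with $H\not\le N$. This is the entire content of (3)$\Rightarrow$(1), and you leave it as a plan. Your intermediate claim that $K=H\cap(B^{\N})^E$ is normalized by $N$ is unsupported: $\W(N)=N$ makes the \emph{residuals} of subnormal subgroups of $N$ normal in $N$, not the subgroups themselves. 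The minimal-counterexample reduction to a primitive quotient and the computation with $y^{r^k}$ are not carried out, and you do not say where $\W(B)=B$ is needed. As it stands, (3)$\Rightarrow$(1) is not proved.

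The missing idea is to apply $\W(N)=N$ to a residual and then argue modulo a Fitting subgroup. No reduction to primitive groups is needed.

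Let $H_0$ be the preimage in $H$ of $S=O_r(HN/N)\ne1$; it is subnormal in $G$. Put $K=H_0\cap N$ and $D=K^{\N}$. Since $\W(N)=N$, we have $D\unlhd N$, and Lemma~\ref{lem:residual-product} gives $D=\prod_iD_i$ with $D_i\unlhd B_i$. This is where $\W(B)=B$ enters. Also $D\unlhd H_0$, since $D$ is characteristic in $K$.

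Let $F_0/D=\Fit(N/D)$. Then $F_0=\prod_iF_i$ with $F_i/D_i=\Fit(B_i/D_i)$. Moreover $K\le F_0$, because $K/D$ is nilpotent and subnormal in $N/D$.

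In $Y=NH_0/F_0$, the image of $H_0$ is a quotient of $H_0/K\cong S$, hence a subnormal $r$-subgroup. By Lemma~\ref{lem:subnormal-fitting} it lies in $O_r(Y)$, so $[N/F_0,H_0F_0/F_0]$ is an $r$-group. If $x\in H_0$ has nontrivial image in $E$ and $u\in B_i$, then $[u,x]$ has $i$-coordinate $u^{-1}$. Hence this $r$-group projects onto each $B_i/F_i$, and so each $B/D_i$ has an $r$-group as Fitting quotient. Since $r\notin\Sigma(B)$, Lemma~\ref{lem:sigma} forces $B/D_i$ to be nilpotent. Therefore $N^{\N}\le D\le H_0^{\N}\le H^{\N}$, and your final step completes the proof.
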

The idea of the proof is as follows. Suppose that $r\in\Sigma(B)$ and
that $E$ has an element of order $r$ in $O_r(E)$. This element
permutes the coordinates, and the residual of a suitable subnormal
subgroup consists of tuples whose coordinate sums vanish along its
orbits. Changing a single coordinate destroys this property, so the
base does not normalize that residual. Conversely, if no prime of
$\Sigma(B)$ divides $|\Fit(E)|$, then for a subnormal subgroup $H$ not
contained in the base, the action of the $r$-part of its image on the
coordinates forces every coordinate quotient involved to be nilpotent.
This places $\res N$ inside $\res H$, and the base, being nilpotent
modulo $\res N$, normalizes $\res H$.

\begin{proof}
The equivalence of (2) and (3) follows from the Sylow decomposition of
the nilpotent normal subgroup $\Fit(E)$: its Sylow $r$-subgroup equals
$O_r(E)$.

\medskip\noindent\emph{(1) implies (3).}
Suppose $\W(G)=N$ and, towards a contradiction, choose
$r\in\Sigma(B)$ with $O_r(E)\ne1$. Let $\overline B=V\rtimes P$ be
the corresponding quotient of $B$, with $V$ elementary abelian of
characteristic $q\ne r$. Applying this quotient coordinatewise gives
\[
 \overline G=\overline B\wr E=U\rtimes T,
 \qquad U=V^E,\quad T=P^E\rtimes E.
\]
Theorem~\ref{thm:functor} implies
\begin{equation}\label{eq:wreath-quotient}
 \overline B^E\le \W(\overline G).
\end{equation}
Take $s\in O_r(E)$ of order $r$ and set $A=\langle s\rangle$.
The group $P^E\rtimes O_r(E)$ is a normal $r$-subgroup of $T$.
Every subgroup of a finite $r$-group is subnormal, hence $A\sn T$
and $H=U\rtimes A\sn\overline G$. We claim that
\[
 \res H=[U,A].
\]
Indeed, $H/[U,A]$ is abelian. Moreover $[U,A]=(s-1)U$, and since
$q\ne r$, Maschke's theorem gives
$U=\ker(s-1)\oplus(s-1)U$, so that $s-1$ is invertible on $(s-1)U$.
Thus $[U,A,A]=[U,A]$, and the lower central series of $H$ stabilizes
at $[U,A]$.

On each orbit of $s$ on $E$, the subgroup $[U,A]$ consists of tuples
whose coordinate sum is zero. Choose distinct coordinates $i,j$ in
one orbit, and $a\in P$, $v\in V$ with $v^a\ne v$. The tuple with
entries $v$ at $i$, $-v$ at $j$, and zero elsewhere belongs to
$[U,A]$. Acting by $a$ only at coordinate $i$ changes its orbit sum
to $v^a-v\ne0$. Hence this element of $P^E$ does not normalize
$\res H$, contradicting \eqref{eq:wreath-quotient}.

\medskip\noindent\emph{(2) implies (1).}
Assume (2). By Lemma~\ref{lem:base-upper}, $\W(G)\le N$, and
$\W(N)=N$ by Theorem~\ref{thm:W-product}. It remains to show that $N$
normalizes $R=\res H$ for each $H\sn G$. If $H\le N$, this follows
from $\W(N)=N$.

Otherwise let $1\ne T\sn E$ be the image of $H$. The nontrivial
soluble group $T$ has $\Fit(T)\ne1$. Choose $r$ with
$S=O_r(T)\ne1$. By Lemma~\ref{lem:subnormal-fitting}, $\Fit(T)$ lies
in $\Fit(E)$; therefore $r\notin\Sigma(B)$.
Let $H_0$ be the preimage of $S$ in $H$, and put
\[
 K=H_0\cap N,\qquad D=\res K.
\]
Here $H_0\unlhd H$, $K\sn N$, and $K\unlhd H_0$.
Since $\W(N)=N$, the subgroup $D$ is normal in $N$; it is also
characteristic in $K$ and hence normal in $H_0$. Thus $D\unlhd NH_0$.
Lemma~\ref{lem:residual-product}, iterated over the coordinates, gives
\[
 D=\prod_iD_i,\qquad D_i\unlhd B_i.
\]
The group $K/D$ is nilpotent and subnormal in $N/D$, so it lies in
$\Fit(N/D)$. Let $F_0$ be the preimage of this Fitting subgroup in $N$.
Then $K\le F_0\unlhd NH_0$ and
\[
 F_0=\prod_iF_i,\qquad F_i/D_i=\Fit(B_i/D_i).
\]
In $Y=NH_0/F_0$, the image $\overline H_0$ is a subnormal $r$-group:
it is a quotient of $H_0/K\cong S$. Lemma~\ref{lem:subnormal-fitting}
gives $\overline H_0\le O_r(Y)$, and
\[
 [N/F_0,\overline H_0]\le O_r(Y)\cap(N/F_0)
\]
is an $r$-group.

Choose $x\in H_0$ with nonidentity image in $S$. For each coordinate
$i$, and $u\in B_i$, use the convention $[u,x]=u^{-1}u^x$.
The two supported factors occur at distinct coordinates. The projection
of $[u,x]$ to coordinate $i$ is $u^{-1}$, since $x$ moves $i$.
Thus the $r$-group $[N/F_0,\overline H_0]$ projects onto every
$B_i/F_i$. Each
\[
 (B_i/D_i)/\Fit(B_i/D_i)
\]
is consequently an $r$-group. Lemma~\ref{lem:sigma}, together with
$r\notin\Sigma(B)$, forces $B_i/D_i$ to be nilpotent for every $i$.
It follows that $N/D$ is nilpotent and
\begin{equation}\label{eq:wreath-contain}
 \res N\le D\le\res{H_0}\le\res H=R.
\end{equation}
In $G/\res N$, the subgroup $N/\res N$ is nilpotent and normal.
By Lemma~\ref{lem:Fstar}, it normalizes the nilpotent residual of
the image of $H$, namely $R/\res N$. In view of \eqref{eq:wreath-contain},
this says exactly that $N$ normalizes $R$. Thus $N\le \W(G)$.

\medskip\noindent\emph{Length and height.}
Under the equivalent conditions, $\W(G)=N$ and $G/N\cong E$, so
Lemma~\ref{lem:series-shift} gives $\ellW(G)=1+\ellW(E)$. For height,
$b=h(B)\ge2$. If $C=B/\Fit_{b-1}(B)$ is not of prime-power order,
apply Theorem~\ref{thm:wreath-height}. Otherwise $C$ is a $p$-group.
The group $Q=B/\Fit_{b-2}(B)$ has height two, and
$Q/\Fit(Q)\cong C$. Lemma~\ref{lem:sigma} implies $p\in\Sigma(B)$,
so $O_p(E)=1$, and Theorem~\ref{thm:wreath-height} again yields
$h(G)=h(B)+h(E)$.
\end{proof}

\begin{corollary}\label{cor:wreath-coprime}
Let $B,E$ be soluble with $\W(B)=B$ and $B^{\N}\ne1$, and suppose that
\[
 \gcd(|B/B'|,|\Fit(E)|)=1.
\]
Then, for $G=B\wr E$,
\[
\W(G)=B^E,\qquad \ellW(G)=1+\ellW(E),\qquad h(G)=h(B)+h(E),
\qquad \Fit(G)=\Fit(B)^E.
\]
\end{corollary}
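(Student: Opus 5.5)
The plan is to reduce to Theorem~\ref{thm:wreath-base} and Theorem~\ref{thm:wreath-height}, with one extra computation for the Fitting subgroup.

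First step. By Lemma~\ref{lem:sigma}, $\Sigma(B)\subseteq\pi(B/B')$. The hypothesis $\gcd(|B/B'|,|\Fit(E)|)=1$ says that $\pi(B/B')\cap\pi(\Fit(E))=\varnothing$. Hence $\Sigma(B)\cap\pi(\Fit(E))=\varnothing$, which is condition (2) of Theorem~\ref{thm:wreath-base}. Since $\W(B)=B$ and $\res B\ne1$, that theorem applies directly. It gives $\W(G)=B^E$, $\ellW(G)=1+\ellW(E)$ and $h(G)=h(B)+h(E)$.

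Second step, the Fitting subgroup. I would apply Lemma~\ref{lem:wreath-fitting} with $D=B$. This requires $B$ not to be of prime-power order. That holds because $B$ is nonnilpotent ($\res B\ne1$), while a group of prime-power order is nilpotent. The lemma then gives $\Fit(G)=\Fit(B)^E$ immediately.

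Note that the case $E=1$ is harmless throughout: then $G=B$, $\Fit(E)=1$, and all the formulas are trivial. Theorem~\ref{thm:wreath-base} also covers this case, since $O_r(1)=1$.

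I do not expect a real obstacle here. The only point needing care is checking that the hypotheses of Lemma~\ref{lem:wreath-fitting} are met in the non-prime-power branch, which is exactly nonnilpotence of $B$. The coprimality assumption is used only through the inclusion $\Sigma(B)\subseteq\pi(B/B')$.
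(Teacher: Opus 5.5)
Your proposal is correct and follows essentially the same route as the paper's proof. The inclusion $\Sigma(B)\subseteq\pi(B/B')$ from Lemma~\ref{lem:sigma} gives condition (2) of Theorem~\ref{thm:wreath-base}, which yields the first three identities. Nonnilpotence of $B$ then places you in the non-prime-power case of Lemma~\ref{lem:wreath-fitting}, which gives $\Fit(G)=\Fit(B)^E$.
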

\begin{proof}
By Lemma~\ref{lem:sigma}, $\Sigma(B)\subseteq\pi(B/B')$, so
Theorem~\ref{thm:wreath-base} gives the first three identities. Since
$B$ is not nilpotent, it is not of prime-power order, and
Lemma~\ref{lem:wreath-fitting} gives $\Fit(G)=\Fit(B)^E$.
\end{proof}

\subsection{Examples}
\begin{example}[A base admitting every soluble top group]
\label{ex:c7c6}
Let $B=C_7\rtimes C_6$, with the faithful action of $C_6$ on $C_7$.
Then $\W(B)=B$, $\res B=C_7$, $\Fit(B)=C_7$, and
$\Sigma(B)=\varnothing$. Consequently, for every finite soluble group
$E$,
\[
 \W(B\wr E)=B^E,\qquad
 \ellW(B\wr E)=1+\ellW(E),\qquad h(B\wr E)=2+h(E).
\]
\end{example}
\begin{proof}
The group $B$ is primitive on $C_7$ with complement $M=C_6$.
Since $\Fit(M)=C_6$ is cyclic, $\W(B)=B$ by
Theorem~\ref{thm:affine-criterion}. Also $\Fit(B)=C_7$ by
Lemma~\ref{lem:affine-fitting}, and
$\res B=C_7$ because $[C_7,C_6]=C_7$ and $B/C_7$ is abelian.
Every nontrivial normal subgroup $D$ of $B$ contains $C_7$: otherwise
$D\cap C_7=1$, whence $[D,C_7]=1$ and $D\le C_B(C_7)=C_7$, a
contradiction. Every proper quotient of $B$ is therefore abelian. But
$B/\Fit(B)=C_6$ is not an $r$-group for any prime $r$, so
Lemma~\ref{lem:sigma} gives $\Sigma(B)=\varnothing$. The formulas
follow from Theorem~\ref{thm:wreath-base}. For $E=C_2$ or $C_3$ the
condition $\gcd(|B/B'|,|\Fit(E)|)=1$ fails.
\end{proof}

\begin{example}[The obstruction for $S_3$]
\label{ex:s3}
For $B=S_3$ one has $\W(B)=B$ and $\Sigma(B)=\{2\}$. Thus, for every
finite soluble $E$,
\[
 \W(S_3\wr E)=S_3^E\quad\Longleftrightarrow\quad O_2(E)=1.
\]
For $G=S_3\wr C_2$ the equality fails: $h(G)=2$, $\ellW(G)=2$, and
$\W(G)$ is the subgroup of index four of $G$ consisting of the pairs
$(x,y)\in S_3\times S_3$ with $x$ and $y$ of the same sign.
\end{example}
\begin{proof}
The subnormal subgroups of $S_3$ are $1,C_3,S_3$, whose residuals
are $1,1,C_3$. The only nonnilpotent quotient of $S_3$ is itself, with
Fitting quotient $C_2$. The equivalence follows from
Theorem~\ref{thm:wreath-base}, and $h(G)=2+h(C_2/O_2(C_2))=2$ by
Theorem~\ref{thm:wreath-height}.

Put $A=C_3\times C_3$ and let $X$ be the subgroup of $S_3\times S_3$
consisting of pairs of the same sign. Since $G/A\cong D_8$ is
nilpotent, Lemma~\ref{lem:residual-basics} gives $H^{\N}\le A$
for every $H\le G$. On the vector space $A\cong\FF_3^2$, each
element of $X$ acts as one of the scalars $1,-1$; thus $X$ normalizes
every subgroup of $A$, and $X\le\W(G)$.
Let $t$ interchange the two coordinates. The subgroup
$H=A\langle t\rangle$ is subnormal in $G$, because $H/A$ is a
subgroup of the $2$-group $G/A$. Its residual is
$[A,t]=\{(a,-a):a\in\FF_3\}$: this subgroup has cyclic quotient
in $H$ and satisfies $[[A,t],t]=[A,t]$.
Its normalizer in $S_3\times S_3$ is exactly $X$. Since
$\W(G)\le S_3\times S_3$ by Lemma~\ref{lem:base-upper}, it follows
that $\W(G)=X$ and $|G:\W(G)|=4$. Finally, $G/X$ is a nontrivial
nilpotent group, so $\ellW(G)=2$.
\end{proof}

\section{Extremal heights and all pairs at a given length}
\label{sec:extremal}
For $n\ge1$ put
\begin{align*}
 a(n)&=\max\{h(G):G\text{ finite soluble},\ \ellW(G)=n\},\\
 a_{\mathrm{odd}}(n)&=\max\{h(G):G\text{ finite soluble of odd order},\ \ellW(G)=n\},\\
 a_A(n)&=\max\{h(G):G\text{ a finite soluble }A\text{-group},\ \ellW(G)=n\}.
\end{align*}
By Corollary~\ref{cor:soluble-length}, $a(n)\le4n$ and
$a_{\mathrm{odd}}(n),a_A(n)\le3n$. In this section we assemble the
blocks of Section~\ref{sec:construction} by the wreath products of
Section~\ref{sec:soluble-wreath}. Iterating $\AGL(2,3)$ gives height $4n$
at length $n$ (Theorem~\ref{thm:soluble-exact}), and the odd-order
blocks give $3n$ (Theorem~\ref{thm:odd-exact}). Mixing blocks of heights
two, three and four with groups of equal height and length realizes
every intermediate pair (Theorem~\ref{thm:all-pairs}).
Table~\ref{tab:examples} lists the examples. Finally,
Section~\ref{subsec:equality} describes the groups attaining
$h(G)=4\ellW(G)$.

\begin{theorem}\label{thm:soluble-exact}
Let $G_1=\AGL(2,3)$ and $G_{n+1}=\AGL(2,3)\wr G_n$. Then $G_n$ is a
$\{2,3\}$-group with
\[
 \ellW(G_n)=n,\qquad h(G_n)=4n,\qquad \Fit(G_n)\text{ a }3\text{-group}.
\]
In particular $a(n)=4n$ for every $n\ge1$.
\end{theorem}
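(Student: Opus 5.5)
We argue by induction on $n$, with $B=\AGL(2,3)$ throughout. The case $n=1$ is Proposition~\ref{prop:agl}: $B$ is a $\W$-group, so $\ellW(B)=1$; moreover $h(B)=4$, $\Fit(B)=\FF_3^2$ is a $3$-group, and $B$ is a $\{2,3\}$-group. For the inductive step, assume that $G_n$ is a $\{2,3\}$-group with $\ellW(G_n)=n$, $h(G_n)=4n$ and $\Fit(G_n)$ a $3$-group. Then $G_{n+1}=B\wr G_n$ is a $\{2,3\}$-group, since its order is $|B|^{|G_n|}|G_n|$.

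We apply Theorem~\ref{thm:wreath-base} with $E=G_n$. Its hypotheses on $B$ hold: $\W(B)=B$, and $\res B\ne1$ because $h(B)=4$, so $B$ is not nilpotent. By Proposition~\ref{prop:agl}, $\Sigma(B)=\{2\}$. The induction hypothesis gives $\pi(\Fit(G_n))\subseteq\{3\}$, so condition (2) of Theorem~\ref{thm:wreath-base} holds. That theorem then yields
\[
\ellW(G_{n+1})=1+\ellW(G_n)=n+1,\qquad h(G_{n+1})=h(B)+h(G_n)=4(n+1).
\]

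It remains to carry forward the statement that $\Fit(G_{n+1})$ is a $3$-group. Since $B$ is not nilpotent, it is not of prime-power order. Lemma~\ref{lem:wreath-fitting} therefore gives $\Fit(G_{n+1})=\Fit(B)^{G_n}=(\FF_3^2)^{G_n}$, which is a $3$-group. This Fitting claim is the only point needing care. It is exactly what keeps $2\notin\pi(\Fit(E))$ at the next step. If a $2$-part appeared in $\Fit(E)$, the obstruction $\Sigma(B)=\{2\}$ would break the base criterion, as in Example~\ref{ex:s3}.

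Finally, $a(n)=4n$: the groups $G_n$ show $a(n)\ge4n$, and Corollary~\ref{cor:soluble-length} gives $a(n)\le4n$.
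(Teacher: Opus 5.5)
Your proposal is correct and follows essentially the same induction as the paper. The only difference is packaging: you invoke Theorem~\ref{thm:wreath-base} with $\Sigma(B)=\{2\}$ and then Lemma~\ref{lem:wreath-fitting} directly. The paper instead cites Corollary~\ref{cor:wreath-coprime} through the condition $\gcd(|B/B'|,|\Fit(G_n)|)=1$, and that corollary is exactly those two results combined.
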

\begin{proof}
By Proposition~\ref{prop:agl}, $B=\AGL(2,3)$ is a $\W$-group with
$B^{\N}\ne1$, $B/B'\cong C_2$ and $\Fit(B)$ a $3$-group; this is the
case $n=1$. If the assertion holds for $G_n$, then
$\gcd(|B/B'|,|\Fit(G_n)|)=1$, and
Corollary~\ref{cor:wreath-coprime} gives $\ellW(G_{n+1})=n+1$,
$h(G_{n+1})=4+4n$ and $\Fit(G_{n+1})=\Fit(B)^{G_n}$, a $3$-group.
\end{proof}

\begin{theorem}\label{thm:odd-exact}
For every $n\ge1$, $a_{\mathrm{odd}}(n)=a_A(n)=3n$. More precisely:
\begin{enumerate}
\renewcommand{\labelenumi}{\textup{(\roman{enumi})}}
\item if $B$ is the group of Proposition~\ref{prop:sharp3}, the
iterated wreath powers $H_1=B$, $H_{n+1}=B\wr H_n$ are
$\{3,7,29\}$-groups of odd order with $\ellW(H_n)=n$ and $h(H_n)=3n$;
\item there are odd-order soluble $A$-groups $K_n$ with
$\ellW(K_n)=n$ and $h(K_n)=3n$.
\end{enumerate}
\end{theorem}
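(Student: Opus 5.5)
The upper bound costs nothing: Corollary~\ref{cor:soluble-length} gives $h(G)\le3\ellW(G)$ whenever $|G|$ is odd or the Sylow $2$-subgroups of $G$ are abelian. In particular it covers every soluble $A$-group. Hence $a_{\mathrm{odd}}(n)\le3n$ and $a_A(n)\le3n$, and it suffices to construct the groups in (i) and (ii). The plan is to iterate wreath products and apply Corollary~\ref{cor:wreath-coprime} at each step.

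For (i), let $B=\FF_{29}^3\rtimes(C_7\rtimes C_3)$ be the group of Proposition~\ref{prop:sharp3}. It satisfies $\W(B)=B$, $B^{\N}\ne1$ (since $h(B)=3$), $B/B'\cong C_3$, and $\Fit(B)=V$ is a $29$-group. We induct on $n$, carrying the extra claim that $\Fit(H_n)$ is a $29$-group. The induction step is then immediate:
\begin{itemize}
\item $\gcd(|B/B'|,|\Fit(H_n)|)=\gcd(3,29^k)=1$;
\item Corollary~\ref{cor:wreath-coprime} therefore gives $\ellW(H_{n+1})=1+n$ and $h(H_{n+1})=3+3n$;
\item it also gives $\Fit(H_{n+1})=\Fit(B)^{H_n}$, again a $29$-group.
\end{itemize}
Finally, $|H_{n+1}|=|B|^{|H_n|}|H_n|$, so $H_{n+1}$ is an odd-order $\{3,7,29\}$-group.

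For (ii), the groups $H_n$ are not $A$-groups once $n\ge2$, because a Sylow subgroup of $B\wr E$ involving a prime of both $B$ and $E$ is a wreath product and is nonabelian. To avoid this we use blocks with pairwise disjoint prime sets. Applying Theorem~\ref{thm:avoid-primes} repeatedly, with $\Pi$ the set of primes already used, yields primitive odd-order soluble $A$-groups $B_1,\dots,B_n$ with $\W(B_m)=B_m$, $h(B_m)=3$ (so $B_m^{\N}\ne1$), and the sets $\pi(B_m)$ pairwise disjoint. Put $K_1=B_1$ and $K_{m+1}=B_{m+1}\wr K_m$.

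Since $\pi(K_m)=\pi(B_1)\cup\dots\cup\pi(B_m)$ is disjoint from $\pi(B_{m+1})$, the coprimality hypothesis $\gcd(|B_{m+1}/B_{m+1}'|,|\Fit(K_m)|)=1$ holds. Corollary~\ref{cor:wreath-coprime} then gives $\ellW(K_m)=m$ and $h(K_m)=3m$ by induction, and each $K_m$ has odd order.

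The one point needing care is that $K_{m+1}$ is an $A$-group; we check it one prime at a time:
\begin{itemize}
\item For $p\in\pi(B_{m+1})$, the prime $p$ does not divide $|K_m|$. So a Sylow $p$-subgroup of $K_{m+1}$ lies in the base $B_{m+1}^{K_m}$. It is a direct product of Sylow subgroups of the coordinate copies, hence abelian.
\item For $p\in\pi(K_m)$, the prime $p$ does not divide $|B_{m+1}|$. Thus the base is a normal $p'$-subgroup with complement $K_m$, and a Sylow $p$-subgroup of $K_{m+1}$ is isomorphic to one of $K_m$, which is abelian by induction.
\end{itemize}
This completes the induction, so $a_A(n)=3n$. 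Since the $K_n$ have odd order, they also give a second witness for $a_{\mathrm{odd}}(n)=3n$.
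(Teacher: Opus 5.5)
Your proposal is correct and follows the paper's proof essentially step for step. The upper bound comes from Corollary~\ref{cor:soluble-length}. Part (i) is the same induction via Corollary~\ref{cor:wreath-coprime}, carrying the fact that $\Fit(H_n)$ is a $29$-group. Part (ii) uses the same disjoint-prime blocks from Theorem~\ref{thm:avoid-primes} and the same prime-by-prime Sylow argument for the $A$-group property.
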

\begin{proof}
(i) Since $B/B'\cong C_3$ and $\Fit(B)$ is a $29$-group, the argument
of Theorem~\ref{thm:soluble-exact} applies, with $\Fit(H_n)$ a
$29$-group at every step.

(ii) Take $K_1$ to be a group from Theorem~\ref{thm:avoid-primes}.
Having constructed $K_n$, use that theorem with $\Pi=\pi(K_n)$ to
choose a block $B_{n+1}$ of coprime order, and put
$K_{n+1}=B_{n+1}\wr K_n$. Corollary~\ref{cor:wreath-coprime} gives
$\ellW(K_{n+1})=n+1$ and $h(K_{n+1})=3n+3$. Odd order is preserved.
Since base and top have disjoint prime supports, a Sylow subgroup of
$K_{n+1}$ is either a direct product of Sylow subgroups of the base
or a Sylow subgroup of the top group. Both are abelian by induction,
so $K_{n+1}$ is an $A$-group.
\end{proof}
In (i) the Sylow $3$-subgroups of $H_n$ are not abelian for $n\ge2$;
the disjoint supports in (ii) are needed only for the $A$-group
property.

\begin{lemma}\label{lem:equal-length}
For every $m\ge1$ and $p\in\{2,3\}$ there is a soluble $\{2,3\}$-group
$K$ with $\ellW(K)=h(K)=m$ and $\Fit(K)$ a nontrivial $p$-group.
\end{lemma}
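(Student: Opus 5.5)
We build $K$ by induction on $m$, iterating wreath products of $S_3$, and treat the two values of $p$ separately. The idea is to use a base $B$ of height two with $\W(B)=B$ and $B^{\N}\ne1$, so that each wreath step adds one to both $\ellW$ and $h$.

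\emph{Case $p=3$.} Put $K_1=C_3$, so $\ellW(K_1)=h(K_1)=1$ and $\Fit(K_1)=C_3$. Set $K_{m+1}=S_3\wr K_m$ and maintain the hypothesis that $\Fit(K_m)$ is a nontrivial $3$-group. By Example~\ref{ex:s3}, $\W(S_3)=S_3$ and $\Sigma(S_3)=\{2\}$. Hence $\Sigma(S_3)\cap\pi(\Fit(K_m))=\varnothing$, and Theorem~\ref{thm:wreath-base} (or Corollary~\ref{cor:wreath-coprime}, since $|S_3/S_3'|=2$) gives
\[
\ellW(K_{m+1})=1+m,\qquad h(K_{m+1})=2+h(K_m)=m+2.
\]
This is off by one: the heights add two per step while the length adds one. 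So this construction does not work as stated, and the plan must be adjusted.

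The fix is to use a base of height one that still raises the length. For a nilpotent base Lemma~\ref{lem:base-upper} does not apply, so instead I would pass to a quotient that kills height but not length. Concretely, take $K_{m+1}=S_3\wr K_m$, which has $\ellW=m+1$ and $h=m+2$, and look for a normal subgroup $N$ with $h(K_{m+1}/N)=h(K_{m+1})-1$ and $\ellW$ unchanged. This seems delicate.

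A cleaner alternative uses a $2$-group base, $K_{m+1}=C_2\wr K_m$ or $C_3\wr K_m$. Theorem~\ref{thm:wreath-height} gives $h(C_q\wr K_m)=1+h(K_m/O_q(K_m))$, which equals $1+h(K_m)$ whenever $O_q(K_m)=1$. This suggests alternating primes: if $\Fit(K_m)$ is a $p$-group, take $q\ne p$ among $\{2,3\}$. Then $O_q(K_m)=1$, so $h$ increases by exactly one, and by Lemma~\ref{lem:wreath-fitting} $\Fit(C_q\wr K_m)=C_q^{K_m}$ is a $q$-group. The primes then alternate; to reach a prescribed $p$ at level $m$, start with $K_1=C_2$ or $C_3$ according to the parity of $m$.

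\emph{Key step: $\ellW(K_{m+1})=m+1$.} The upper bound $\ellW\le h=m+1$ comes from Theorem~\ref{thm:general-height}. For the lower bound, Lemma~\ref{lem:series-shift} reduces the claim to showing $\W(K_{m+1})\le C_q^{K_m}$; then $K_{m+1}/\W(K_{m+1})$ maps onto $K_m$, so $\ellW(K_{m+1})\ge1+m$. Lemma~\ref{lem:base-upper} does not cover this, since its residual argument fails for a nilpotent base. Instead I would use subnormal subgroups $H=C_q^{K_m}\rtimes S$ with $S\le O_p(K_m)$ of order $p$, which is subnormal as in the proof of Theorem~\ref{thm:wreath-base}. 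As computed there, $H^{\N}=[U,S]$ consists of the tuples whose sums vanish along the $S$-orbits. An element $xe$ with $e\notin$ the normalizer of such a set of orbit-sum conditions fails to normalize $H^{\N}$. Intersecting over all choices of $S$ and conjugates should force $e$ to act trivially, so $\W\le C_q^{K_m}$.

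\emph{Main obstacle.} The hard part is this last intersection argument: showing that no nontrivial $e\in K_m$ normalizes all the residuals $[U,S^g]$. The case $m=1$ is clear, since $C_3\wr C_2=S_3\times C_3$ gives $e$ inverting one residual diagonal. For general $m$, I would try to show that if $e$ normalizes $[U,S]$ then $e$ permutes the $S$-orbits, i.e.\ $e\in N_{K_m}(S)\cdot$(stabilizer data). Running over all order-$p$ subgroups $S$ of $\Fit(K_m)$ and using $C_{K_m}(\Fit(K_m))\le\Fit(K_m)$ from (S3) should then force $e$ to be trivial. If that fails, I would instead strengthen the inductive hypothesis to $\W(K_m)=\Fit(K_m)$, which is plausible for these groups.
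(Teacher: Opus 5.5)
Your final construction with an abelian base fails, and it already fails at $m=2$. The permutation module $\FF_3^2$ of $C_2$ splits as diagonal plus antidiagonal, so $C_3\wr C_2\cong C_3\times S_3$; likewise $C_2\wr C_3\cong C_2\times A_4$. By Theorem~\ref{thm:W-product}, Example~\ref{ex:s3} and Theorem~\ref{thm:affine-criterion}, both are $\W$-groups. So for either starting parity $\ellW(K_2)=1<2=h(K_2)$, and the inclusion $\W(K_{m+1})\le C_q^{K_m}$ that your key step aims at is false.

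Your own check of $m=1$ points the same way: an element that inverts the antidiagonal $[U,\langle e\rangle]$ normalizes it. More generally, each test residual $[U,S]$ is normalized by $U\,N_{K_m}(S)$. Intersecting over your subgroups $S\le O_p(K_m)$ therefore still leaves at least $U\,Z(O_p(K_m))>U$. The fallback hypothesis $\W(K_m)=\Fit(K_m)$ also fails already for $K_2\cong C_3\times S_3$. The underlying reason is the one you noticed: Lemma~\ref{lem:base-upper} needs a nonnilpotent base, and nothing replaces it for $C_q$.

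The missing idea is your first attempt with the parity reversed. Keep a nonnilpotent base of height two, so that Lemma~\ref{lem:base-upper} gives $\ellW(B\wr K)\ge1+m$. Then choose $K$ so that the prime of $B/\Fit(B)$ equals the prime of $\Fit(K)$; the correction term in Theorem~\ref{thm:wreath-height} then removes one layer.

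For $p=3$, take $K$ with $\Fit(K)$ a $2$-group and put $K^+=S_3\wr K$. Since $S_3/\Fit(S_3)\cong C_2$ and $O_2(K)=\Fit(K)$, you get $h(K^+)=2+h(K/\Fit(K))=m+1$, and Lemma~\ref{lem:wreath-fitting} gives $\Fit(K^+)=C_3^K$. For $p=2$, use $A_4\wr K$ with $\Fit(K)$ a $3$-group. In both cases Theorem~\ref{thm:general-height} closes the sandwich $m+1\le\ellW(K^+)\le h(K^+)=m+1$.

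Here $\W(K^+)$ is in fact a proper subgroup of the base: the condition of Theorem~\ref{thm:wreath-base} fails on purpose, since $O_2(K)\ne1$, resp.\ $O_3(K)\ne1$. No computation of $\W(K^+)$ is needed, though. This is also why the induction must carry both primes at every level, as the statement does.
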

\begin{proof}
For $m=1$ take $K=C_p$. Suppose the assertion holds for $m$ and both
primes. To obtain $p=3$ at $m+1$, take $K$ for $m$ with $\Fit(K)$ a
$2$-group and put $K^+=S_3\wr K$. Since $S_3/\Fit(S_3)\cong C_2$ and
$O_2(K)=\Fit(K)$, Theorem~\ref{thm:wreath-height} gives
$h(K^+)=2+h(K/\Fit(K))=m+1$, and Lemma~\ref{lem:wreath-fitting} gives
$\Fit(K^+)=C_3^K$. To obtain $p=2$, take $K$ with $\Fit(K)$ a
$3$-group and put $K^+=A_4\wr K$; now $A_4/\Fit(A_4)\cong C_3$, so
again $h(K^+)=m+1$, and $\Fit(K^+)=V_4^K$. In both cases
Lemma~\ref{lem:base-upper} and Theorem~\ref{thm:general-height} give
$m+1\le\ellW(K^+)\le h(K^+)=m+1$.
\end{proof}

\begin{theorem}\label{thm:all-pairs}
Let $n\ge1$. A soluble group $G$ with $\ellW(G)=n$ satisfies
$n\le h(G)\le4n$, and for every integer $h$ with $n\le h\le4n$ there
is a soluble $\{2,3,7,29\}$-group $G$ with $\ellW(G)=n$ and $h(G)=h$.
\end{theorem}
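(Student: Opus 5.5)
The inequalities $n\le h(G)\le4n$ are Corollary~\ref{cor:soluble-length}, so the work is the construction of a group for each pair. I would realize every pair $(n,h)$ with $n\le h\le4n$ by an iterated wreath product $G=B_1\wr(B_2\wr(\cdots\wr B_n))$, where each block $B_k$ is either a $\W$-group of height $c_k\in\{2,3,4\}$ or a $\W$-group of height one. The heights should add and each step should raise the length by exactly one. Write $h=n+e$ with $0\le e\le3n$. Then choose $c_k\in\{1,2,3,4\}$ with $\sum c_k=h$, for instance as many $4$'s as possible, then one block of height $1$, $2$ or $3$ for the remainder, and $1$'s elsewhere. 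The available blocks are:
\begin{itemize}
\item[] height four: $\AGL(2,3)$, with $\Sigma=\{2\}$ and $\Fit$ a $3$-group;
\item[] height three: the group of Proposition~\ref{prop:sharp3}, with $\Sigma\subseteq\{3\}$ and $\Fit$ a $29$-group;
\item[] height two: the base $C_7\rtimes C_6$ of Example~\ref{ex:c7c6}, with $\Sigma=\varnothing$;
\item[] height one: the cyclic groups $C_p$.
\end{itemize}
All of these are $\{2,3,7,29\}$-groups.

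To control the bottom of the tower cleanly, I would instead start from the group $K$ of Lemma~\ref{lem:equal-length}. It has $\ellW(K)=h(K)=m$ and $\Fit(K)$ a $p$-group with $p\in\{2,3\}$ chosen freely, and it absorbs all the height-one layers at once. Put $K$ on top, with $m=n-j$, where $j$ is the number of blocks of height $\ge2$. Then wreath the nonnilpotent blocks over it one at a time, $G_{i+1}=B\wr G_i$. At each step I apply Theorem~\ref{thm:wreath-base}. Since $\W(B)=B$ and $\res B\ne1$, it suffices to check $\Sigma(B)\cap\pi(\Fit(G_i))=\varnothing$, and then $\ellW$ increases by one and $h$ increases by $h(B)$. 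By Lemma~\ref{lem:wreath-fitting}, $\Fit(G_{i+1})=\Fit(B)^{G_i}$, because a nonnilpotent $B$ is not of prime-power order.

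The Fitting subgroup of the current top group is therefore just $\Fit$ of the last block added, and the compatibility checks are prime bookkeeping. Over $\AGL(2,3)$ (Fitting a $3$-group) or over $C_7\rtimes C_6$ (Fitting a $7$-group) one may place $\AGL(2,3)$, since $2\notin\{3,7\}$. Over any top one may place $C_7\rtimes C_6$, since $\Sigma=\varnothing$. The height-three block may sit over anything whose Fitting subgroup is not a $3$-group.

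Concretely, I order the tower so that all $\AGL(2,3)$ factors sit directly above one another, as in Theorem~\ref{thm:soluble-exact}, and the single block of height $2$ or $3$ sits at the bottom of the stack, just above $K$. The first block over $K$ needs $K$ with a compatible Fitting prime:
\begin{itemize}
\item[] if that block is $\AGL(2,3)$, take $p=3$ in Lemma~\ref{lem:equal-length};
\item[] if it is the height-three block, take $p=2$;
\item[] if it is the height-two block, $p$ is arbitrary.
\end{itemize}
If the height-three block is followed by $\AGL(2,3)$, we need $2\notin\{29\}$, which holds.

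Edge cases:
\begin{itemize}
\item[] If $m=0$, the bottom nonnilpotent block itself serves as the starting group, of length one.
\item[] The case $h=4n$ is Theorem~\ref{thm:soluble-exact}.
\item[] The case $h=n$ is Lemma~\ref{lem:equal-length}.
\end{itemize}

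I expect the main obstacle to be exactly this ordering and prime bookkeeping. One must make sure that each wreath step satisfies the $\Sigma$-condition of Theorem~\ref{thm:wreath-base}, and that the remainder block of height $3$ is never placed over a top whose Fitting subgroup is a $3$-group, such as $\AGL(2,3)$. If such a conflict arose, I would either replace the offending block by $C_7\rtimes C_6$ plus a height-one layer, or reorder the tower.
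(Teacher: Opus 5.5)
Your proposal is correct and follows the paper's proof essentially step for step. You write $h-n=3a+b$ with $0\le b\le2$ and take a group of Lemma~\ref{lem:equal-length} with $\ellW=h=m$ as the innermost top group. Over it you wreath one block of height $b+1$, or start from that block when $a=n-1$, and then wrap $a$ times in $\AGL(2,3)$, verifying $\Sigma(B)\cap\pi(\Fit(E))=\varnothing$ of Theorem~\ref{thm:wreath-base} at each step via Lemma~\ref{lem:wreath-fitting}. The only cosmetic difference is that you allow any Fitting prime under $C_7\rtimes C_6$, whereas the paper uniformly takes $\Fit(K)$ to be a $2$-group whenever $b>0$.
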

The idea is to write $h-n=3a+b$ with $0\le b\le2$. We first build a
group $S$ of length $n-a$ and height $n-a+b$, from groups of equal
length and height and at most one block of height $b+1$. We then wrap
it $a$ times in $\AGL(2,3)$; each wrapping raises the length by one and
the height by four.
\begin{proof}
The inequalities are Corollary~\ref{cor:soluble-length}. We use the
$\W$-groups
\[
 B_2=C_7\rtimes C_6,\qquad B_3=\FF_{29}^3\rtimes(C_7\rtimes C_3),\qquad
 B_4=\AGL(2,3)
\]
of Example~\ref{ex:c7c6} and Propositions~\ref{prop:sharp3}
and~\ref{prop:agl}. They are nonnilpotent, $h(B_c)=c$, and
$\Sigma(B_2)=\varnothing$, $\Sigma(B_3)\subseteq\{3\}$,
$\Sigma(B_4)=\{2\}$; their Fitting subgroups are a $7$-group, a
$29$-group and a $3$-group, respectively.

Put $x=h-n\in[0,3n]$ and write $x=3a+b$ with $0\le b\le2$. If $x=3n$,
Theorem~\ref{thm:soluble-exact} applies. Otherwise $3a\le x<3n$, so
$a\le n-1$. We first
construct a soluble group $S$ with $\Fit(S)$ of odd order, as follows.
\begin{itemize}
\item If $b=0$, put $m=n-a\ge1$ and let $S$ be a group of
Lemma~\ref{lem:equal-length} with $\ellW(S)=h(S)=m$ and $\Fit(S)$ a
$3$-group.
\item If $b>0$ and $a<n-1$, put $m=n-a-1\ge1$, let $K$ be a group of
Lemma~\ref{lem:equal-length} with $\ellW(K)=h(K)=m$ and $\Fit(K)$ a
$2$-group, and put $S=B_{b+1}\wr K$. Since
$\Sigma(B_{b+1})\cap\{2\}=\varnothing$,
Theorem~\ref{thm:wreath-base} gives $\ellW(S)=m+1$ and
$h(S)=m+b+1$, and $\Fit(S)=\Fit(B_{b+1})^K$ has odd order by
Lemma~\ref{lem:wreath-fitting}.
\item If $b>0$ and $a=n-1$, put $S=B_{b+1}$, so that $\ellW(S)=1$
and $h(S)=b+1$.
\end{itemize}
In each case $\ellW(S)=n-a$ and $h(S)=n-a+b$. Now put $S_0=S$ and
$S_{k+1}=B_4\wr S_k$. Since $\Fit(S_k)$ has odd order and
$\Sigma(B_4)=\{2\}$, Theorem~\ref{thm:wreath-base} and
Lemma~\ref{lem:wreath-fitting} show inductively that
$\ellW(S_k)=\ellW(S)+k$, $h(S_k)=h(S)+4k$, and that $\Fit(S_k)$ is a
$3$-group for $k\ge1$. The group $G=S_a$ has $\ellW(G)=n$ and
$h(G)=n-a+b+4a=n+x=h$. All groups involved are
$\{2,3,7,29\}$-groups.
\end{proof}

\begin{table}[ht]
\centering
\renewcommand{\arraystretch}{1.2}
\begin{tabular}{llccl}
\hline
Group & Reference & $\ellW$ & $h$ & Primes\\
\hline
$C_7\rtimes C_6$ & Ex.~\ref{ex:c7c6} & $1$ & $2$ & $2,3,7$\\
$\FF_{29}^3\rtimes(C_7\rtimes C_3)$ & Prop.~\ref{prop:sharp3} & $1$ & $3$ & $3,7,29$\\
$\AGL(2,3)$ & Prop.~\ref{prop:agl} & $1$ & $4$ & $2,3$\\
$S_3\wr C_2$ & Ex.~\ref{ex:s3} & $2$ & $2$ & $2,3$\\
$\AGL(2,3)\wr\AGL(2,3)$ & Thm.~\ref{thm:soluble-exact} & $2$ & $8$ & $2,3$\\
$G_n$ (iterated $\AGL(2,3)$) & Thm.~\ref{thm:soluble-exact} & $n$ & $4n$ & $2,3$\\
$H_n$ (iterated, odd order) & Thm.~\ref{thm:odd-exact}(i) & $n$ & $3n$ & $3,7,29$\\
$K_n$ (odd-order $A$-groups) & Thm.~\ref{thm:odd-exact}(ii) & $n$ & $3n$ & $3n$ primes\\
any $h\in[n,4n]$ & Thm.~\ref{thm:all-pairs} & $n$ & $h$ & $\subseteq\{2,3,7,29\}$\\
\hline
\end{tabular}
\caption{Examples. The first three rows are primitive groups of
length one. The groups $K_n$ give odd-order $A$-group examples at
every length; all groups in the table are soluble.}
\label{tab:examples}
\end{table}

\subsection{Equality cases}
\label{subsec:equality}
Finally we describe the groups with $h(G)=4\ellW(G)$. In such a group
every layer of the upper $\W$-series has height exactly four, and the
equality can be detected on a single characteristic subgroup, the
iterated residual $L_{4n-4}(G)$ defined below.
For a soluble group $G$ put $L_0(G)=G$ and $L_{j+1}(G)=L_j(G)^{\N}$.
If $G\ne1$, then $G/G^{\N}$ is nilpotent and
$G^{\N}\le\Fit_{h(G)-1}(G)$, so $h(G^{\N})=h(G)-1$; hence
\begin{equation}\label{eq:iterated-height}
 h(L_j(G))=\max\{h(G)-j,0\}\qquad(j\ge0),
\end{equation}
and the subgroups $L_j(G)$ are characteristic and commute with
epimorphisms by Lemma~\ref{lem:residual-basics}(ii).

\begin{proposition}\label{prop:equality}
Let $c=4$, or let $c=3$ and assume that $|G|$ is odd or the Sylow
$2$-subgroups of $G$ are abelian. Let $G$ be soluble with
$\ellW(G)=n\ge1$ and $h(G)=cn$, and put $N_i=\W_i(G)$. Then
\begin{align*}
 h(N_{i+1}/N_i)&=c &&(0\le i<n),\\
 h(N_i)&=ci,\qquad h(G/N_i)=c(n-i) &&(0\le i\le n).
\end{align*}
Moreover, a soluble group $G$ satisfies $\ellW(G)=n$ and $h(G)=4n$ if
and only if $\W_n(G)=G$ and $h(L_{4n-4}(G))=4$. In this case
$L_{4n-4}(G)\le\W(G)$ is a $\W$-group of height four, so it has a
primitive quotient as in Corollary~\ref{cor:primitive-height-four}.
\end{proposition}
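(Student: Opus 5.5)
The first part is a squeezing argument. By Theorem~\ref{thm:structure}(iv) and Lemma~\ref{lem:series-shift}, each factor $X_i=N_{i+1}/N_i$ is a soluble $\W$-group. Under the hypotheses on $c$, Theorem~\ref{thm:soluble-height} gives $h(X_i)\le c$; in the case $c=3$ this uses that $X_i$ is a section of $G$ and so inherits odd order or abelian Sylow $2$-subgroups. Subadditivity (S1) along the series gives
\[
 h(N_i)\le\sum_{k<i}h(X_k)\le ci,\qquad h(G/N_i)\le\sum_{k\ge i}h(X_k)\le c(n-i).
\]
Since $cn=h(G)\le h(N_i)+h(G/N_i)$, both inequalities must be equalities. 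Then $\sum_k h(X_k)=cn$ with each term at most $c$, so every $h(X_k)=c$.

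For the ``moreover'' part, the plan is to identify $L_{4n-4}(G)$ with something inside $N_1$, using the iterated-residual formula \eqref{eq:iterated-height}.

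\emph{Forward direction.} Suppose $\ellW(G)=n$ and $h(G)=4n$. Then $\W_n(G)=G$ by definition, and \eqref{eq:iterated-height} gives $h(L_{4n-4}(G))=4$. To see that $L_{4n-4}(G)\le\W(G)=N_1$, I use that the $L_j$ commute with epimorphisms: the image of $L_{4n-4}(G)$ in $G/N_1$ is $L_{4n-4}(G/N_1)$. By the first part, $h(G/N_1)=4(n-1)$, so \eqref{eq:iterated-height} gives $h(L_{4n-4}(G/N_1))=0$. Hence this image is trivial and $L_{4n-4}(G)\le N_1$.

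\emph{Converse.} Assume $\W_n(G)=G$ and $h(L_{4n-4}(G))=4$. Then $\ellW(G)\le n$. By \eqref{eq:iterated-height}, $h(G)=4n$, and Corollary~\ref{cor:soluble-length} gives $4n\le4\ellW(G)$, so $\ellW(G)\ge n$. Therefore $\ellW(G)=n$ and $h(G)=4n$.

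For the last statement, $L_{4n-4}(G)$ is characteristic in $G$, hence normal, and by the forward direction it lies in $N_1$. So it is a normal, in particular subnormal, subgroup of the $\W$-group $N_1$. By Theorem~\ref{thm:functor} it is a $\W$-group, and its height is $4$. Theorem~\ref{thm:soluble-height} then supplies a primitive quotient of the form in Corollary~\ref{cor:primitive-height-four}.

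I expect the only delicate step to be the containment $L_{4n-4}(G)\le N_1$. It depends on the exact equality $h(G/N_1)=4(n-1)$ from the first part, combined with compatibility of $L_j$ with quotients (Lemma~\ref{lem:residual-basics}(ii)).
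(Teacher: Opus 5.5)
Your proposal is correct and follows essentially the same route as the paper: the squeeze $cn=h(G)\le h(N_i)+h(G/N_i)\le ci+c(n-i)$ gives the first part, and the iterated-height formula \eqref{eq:iterated-height}, applied to the image of $L_{4n-4}(G)$ in $G/\W(G)$, gives the second. The only difference is cosmetic. The paper obtains $L_{4n-4}(G)\le\W(G)$ from $\W_n(G)=G$ alone, since $\ellW(G/\W(G))\le n-1$ forces $h(G/\W(G))\le4n-4$, whereas you use the equality $h(G/N_1)=4(n-1)$ from the first part; this is equally valid in the case where the containment is asserted.
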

\begin{proof}
The layers $N_{i+1}/N_i$ are $\W$-groups in the relevant class, so
$h(N_{i+1}/N_i)\le c$ by Theorem~\ref{thm:soluble-height}, and
$cn=h(G)\le\sum_ih(N_{i+1}/N_i)\le cn$ forces equality throughout.
Subadditivity gives $h(N_i)\le ci$, and
Lemma~\ref{lem:series-shift} with Corollary~\ref{cor:soluble-length}
gives $h(G/N_i)\le c(n-i)$; since
$cn\le h(N_i)+h(G/N_i)$, both are equalities.

For the second assertion, assume $\W_n(G)=G$ and put $T=L_{4n-4}(G)$.
The quotient $G/\W(G)$ has length at most $n-1$, hence height at most
$4n-4$, so the image of $T$ in it is trivial by
\eqref{eq:iterated-height}; thus $T\le\W(G)$. As a normal subgroup
of $\W(G)$, $T$ is a $\W$-group by Theorem~\ref{thm:functor}. If
$\ellW(G)=n$ and $h(G)=4n$, then $h(T)=4$ by
\eqref{eq:iterated-height}. Conversely, if $h(T)=4$, then
\eqref{eq:iterated-height} gives $h(G)=4n$, and
$4n=h(G)\le4\ellW(G)\le4n$ gives $\ellW(G)=n$. The last assertion is
Theorem~\ref{thm:soluble-height}.
\end{proof}

\appendix
\section{Trivial soluble radical and nonsoluble length}
\label{sec:radical-free}
Throughout this appendix $\Rad(G)$ denotes the soluble radical. We
first compute $\W(G)$ when $\Rad(G)=1$; then we bound the nonsoluble
length. The formula~\eqref{eq:radical-free} does not depend on the
classification of finite simple groups, but the solubility statements
below use the Schreier conjecture, which holds by the classification:
$\Out(S)$ is soluble for every nonabelian finite simple group $S$
(see, e.g., \cite{Wilson}).

\subsection{The formula for $\mathrm W(G)$}
\begin{lemma}\label{lem:socle}
Let $T=S_1\times\cdots\times S_t$, where every $S_i$ is nonabelian
simple.
\begin{enumerate}
\renewcommand{\labelenumi}{\textup{(\roman{enumi})}}
\item Every normal subgroup of $T$ is the product of some of the
$S_i$. In particular, automorphisms of $T$ permute the $S_i$.
\item If $T\le G\le\Aut(T)$, where $T$ is identified with $\Inn(T)$,
then $C_G(T)=1$.
\item If $T\trianglelefteq G$ and $C_G(T)=1$, then $\Rad(G)=1$ and
$\Soc(G)=T$. The minimal normal subgroups of $G$ are the products
$T_{\mathcal O}$ of the factors in the orbits $\mathcal O$ of $G$ on
$\{S_1,\ldots,S_t\}$.
\end{enumerate}
\end{lemma}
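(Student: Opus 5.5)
The plan is to prove the three parts in order, each reducing to the standard structure of direct products of nonabelian simple groups.

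For (i), let $N\trianglelefteq T$ and fix $i$. Either $S_i\cap N\ne1$, or $[S_i,N]\le S_i\cap N=1$. In the first case $S_i\cap N$ is normal in $S_i$, because $S_i$ is a direct factor, so simplicity gives $S_i\le N$. In the second case $N$ centralizes $S_i$. Let $I$ be the set of indices with $S_i\le N$, and let $J$ be the complementary set. Then $N$ centralizes $T_J=\prod_{j\in J}S_j$. Project $N$ to $T_J$: the image lies in $Z(T_J)=1$, since $N$ commutes with $T_J$ and its image commutes with $T_J$ coordinatewise. Hence $N\le T_I$, and therefore $N=T_I$. For the second statement, note that the $S_i$ are exactly the minimal normal subgroups of $T$. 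Indeed, any minimal normal subgroup is a product of factors by the above, and minimality forces a single factor. Automorphisms permute minimal normal subgroups, so they permute the $S_i$.

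For (ii), let $g\in C_G(T)$. Conjugation by $g$, as an element of $\Aut(T)$, is the automorphism $g$ itself, because $g\alpha g^{-1}=\alpha$ for all $\alpha\in\Inn(T)$. Write $\iota_x$ for conjugation by $x\in T$. Then $g\iota_xg^{-1}=\iota_{g(x)}$, so $g$ centralizes $\Inn(T)$ exactly when $\iota_{g(x)}=\iota_x$ for all $x$. This means $g(x)x^{-1}\in Z(T)=1$ for all $x$, so $g$ is the identity.

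For (iii), first show $\Rad(G)=1$. Put $R=\Rad(G)$. Then $R\cap T$ is a soluble normal subgroup of $T$, and by (i) it is a product of nonabelian simple factors, so $R\cap T=1$. Hence $[R,T]\le R\cap T=1$, so $R\le C_G(T)=1$. Next take a minimal normal subgroup $K$ of $G$. If $K\cap T=1$, then $K$ centralizes $T$, which is impossible. So $K\le T$, and $K$ is normal in $T$, hence of the form $T_I$ by (i). The subgroup $T_I$ is normal in $G$ exactly when $I$ is $G$-invariant, since conjugation permutes the factors. So $K$ is minimal precisely when $I$ is a single $G$-orbit $\mathcal O$. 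Each orbit product $T_{\mathcal O}$ is therefore minimal normal, and these are all the minimal normal subgroups. Their product is $T$, which gives $\Soc(G)=T$.

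The only slightly delicate step is the projection argument in (i), which relies on the centers of the factors being trivial; everything else is routine.
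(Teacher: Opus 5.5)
Your proof is correct and follows the paper's argument in all three parts. The only variation is in (i): you intersect $N$ with each $S_i$ and then force $N\le T_I$ using $Z(T_J)=1$, whereas the paper works with the projections of $N$ onto the factors and uses $[S_i,S_i]=S_i$ to get $S_i\le N$. In (ii), the clause ``because $g\alpha g^{-1}=\alpha$'' is misplaced: the identity $g\iota_xg^{-1}=\iota_{g(x)}$ holds for every $g\in\Aut(T)$, and centralizing $\Inn(T)$ is the hypothesis. The computation that follows is nevertheless exactly the paper's.
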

\begin{proof}
(i) Let $M\trianglelefteq T$. If the projection of $M$ onto $S_i$ is
nontrivial, it is all of $S_i$. Then $[M,S_i]\leq M\cap S_i$ has
projection $[S_i,S_i]=S_i$, so $S_i\leq M$. Applying this to all
nontrivial projections proves the first assertion. The $S_i$ are then
precisely the minimal normal subgroups of $T$, and so are permuted by
automorphisms.

(ii) An automorphism $\alpha$ of $T$ centralizing $\Inn(T)$ satisfies
$c_{\alpha(x)}=\alpha c_x\alpha^{-1}=c_x$ for every $x\in T$, whence
$\alpha(x)x^{-1}\in Z(T)=1$.

(iii) Let $N\trianglelefteq G$ be soluble. By (i), the soluble normal
subgroup $N\cap T$ of $T$ is trivial, so $[N,T]\le N\cap T=1$ and
$N\le C_G(T)=1$. Thus $\Rad(G)=1$. By (i), $G$ permutes the $S_i$, and
each $T_{\mathcal O}$ is normal in $G$. A normal subgroup of $G$
contained in $T_{\mathcal O}$ is, by (i), a $G$-invariant product of
factors, hence $1$ or $T_{\mathcal O}$; so $T_{\mathcal O}$ is minimal
normal. If $L$ is a minimal normal subgroup of $G$, then $L\cap T\ne1$,
since otherwise $[L,T]=1$. Hence $L\le T$, and $L$ is a $G$-invariant
product of factors, that is, some $T_{\mathcal O}$. Consequently
$\Soc(G)=\prod_{\mathcal O}T_{\mathcal O}=T$.
\end{proof}

\begin{lemma}\label{lem:support}
Suppose $T\trianglelefteq G$ and
\[
 T=S_1\times\cdots\times S_t,
\]
where every $S_i$ is nonabelian simple. If $H\sn G$ and
\[
 I=\{i:S_i\leq H\},\qquad A=\prod_{i\in I}S_i,
 \qquad B=\prod_{i\notin I}S_i,
\]
with an empty product interpreted as $1$, then
\[
 H\cap T=A,\qquad [H,B]=1,\qquad A\leq H^{\N}.
\]
Moreover, writing $L=H^{\N}$, one has the reconstruction identity
\begin{equation}\label{eq:reconstruct}
 L=LT\cap C_G(B).
\end{equation}
\end{lemma}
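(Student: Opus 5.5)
The plan is to prove the four assertions in order: first $H\cap T=A$, then $[H,B]=1$, then $A\le H^{\N}$, and finally the reconstruction identity \eqref{eq:reconstruct}.

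\emph{Step 1: $H\cap T=A$.} Since $H\sn G$ and $T\trianglelefteq G$, the intersection $H\cap T$ is subnormal in $T$. I would prove the auxiliary fact that a subnormal subgroup of $T$ is a product of some of the $S_i$. This follows by induction on the defect from Lemma~\ref{lem:socle}(i), because a normal subgroup of a product of some $S_i$ is again such a product. Hence $H\cap T=\prod_{i\in J}S_i$ for some index set $J$, and $J=I$ by the definition of $I$.

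\emph{Step 2: $[H,B]=1$.} This is the component alternative cited in Lemma~\ref{lem:Fstar}. Each $S_i$ is simple and subnormal in $G$, hence a component of $G$. So for each $i$, either $S_i\le H$ or $[S_i,H]=1$. For $i\notin I$ the second case holds, and taking the product over $i\notin I$ gives $[H,B]=1$. If one prefers an argument avoiding the citation, the direct route is to take a subnormal chain $H=H_0\trianglelefteq\cdots\trianglelefteq H_d=G$ and use Lemma~\ref{lem:residual-basics}(iii). Then $[S_i,{}_dH]\le H\cap T$. If $[S_i,H]\ne1$, then $[S_i,H]$ is a nontrivial subgroup of $S_i$ normalized by $S_i$, so it equals $S_i$. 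Iterating gives $S_i=[S_i,{}_dH]\le H$, and so $i\in I$.

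\emph{Step 3: $A\le H^{\N}$.} We have $A\trianglelefteq H$, since $A=H\cap T$, and $A$ is perfect. Hence $A=[A,A]\le[A,H]$, and $[A,H]\le A$. Induction then gives $A\le\gamma_j(H)$ for every $j$, so $A\le H^{\N}$.

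\emph{Step 4: the reconstruction identity.} Let $L=H^{\N}$. The inclusion $L\le LT\cap C_G(B)$ holds because $L\le H$ and $[H,B]=1$. For the converse, take $x=\ell t$ with $\ell\in L$, $t\in T$, and $x\in C_G(B)$. Since $\ell$ centralizes $B$, so does $t$. Write $t=ab$ with $a\in A$ and $b\in B$. Then $b$ centralizes $B$, so $b\in Z(B)=1$. Therefore $t=a\in A\le L$ by Step 3, and $x\in L$.

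The only step that is not formal is Step 2; in both versions, the component alternative or the commutator iteration, it rests on the simplicity and nonabelianness of the $S_i$.
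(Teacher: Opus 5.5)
Your proof is correct along its main line, but for the first two assertions it takes a different route from the paper. You split them into two independent facts. First, $H\cap T$ is subnormal in $T$, hence a product of factors, by iterating Lemma~\ref{lem:socle}(i) inside $T$. Second, $[H,B]=1$ follows from the component alternative, since each $S_i$ is a component of $G$.

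The paper instead runs a single induction down a subnormal chain $H=H_0\trianglelefteq\cdots\trianglelefteq H_d=G$. It proves, for each term $J$ simultaneously, that $J\cap T$ is a product of factors and that $J$ centralizes the remaining ones. The key step is that for $U\trianglelefteq J$ one has $[J\cap T,U]\le U\cap T$. Hence $U$ acts trivially on $(J\cap T)/(U\cap T)$, so it fixes, and then centralizes, every factor of $J\cap T$ not contained in $U$.

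Your version is shorter and more modular, at the price of citing component theory. The paper's is self-contained, using only Lemma~\ref{lem:socle}(i), and in effect reproves the component alternative for the socle factors. Your Steps 3 and 4 agree with the paper.

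The optional ``direct route'' in your Step 2 should be dropped or repaired. The claim that $[S_i,H]$ is a subgroup of $S_i$ presupposes that $H$ normalizes $S_i$, which you do not know at that point. If some $h\in H$ maps $S_i$ to $S_j$ with $j\ne i$, then $[s,h]=s^{-1}s^h$ has a nontrivial component in $S_j$, so $[S_i,H]\not\le S_i$. Showing that $H$ cannot move a factor not contained in $H$ is exactly the non-formal content here, and the paper's chain argument is what supplies it. Your primary argument via the component alternative does not depend on this, so the proof as a whole stands.
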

\begin{proof}
We use Lemma~\ref{lem:socle}(i) repeatedly: normal subgroups of a
direct product of nonabelian simple groups are products of factors,
and automorphisms permute the factors.

Choose a subnormal chain from $H$ to $G$. We prove down this chain
that each term $J$ satisfies: $J\cap T$ is a product of factors, and
$J$ centralizes the product of the remaining factors. This holds for
$J=G$. Suppose it holds for $J$ and $U\trianglelefteq J$. Put
$A_J=J\cap T$. Since $U\cap A_J\trianglelefteq A_J$, the preceding
observation gives $U\cap T=U\cap A_J=A_U$, a product of some of the
factors in $A_J$. Also
\[
 [A_J,U]\leq A_J\cap U=A_U.
\]
Conjugation by $u\in U$ permutes the factors of $A_J$ and induces the
identity on $A_J/A_U$. It therefore fixes every factor $S_i$ of $A_J$
outside $A_U$ setwise. For $s\in S_i$, we have
$s^{-1}s^u\in S_i\cap A_U=1$, so $U$ centralizes $S_i$. By the
inductive hypothesis, $U$ also centralizes every factor outside $A_J$.
This proves the inductive step and hence the first two assertions for
$H$.

The group $A$ is perfect. Its image in the nilpotent group
$H/H^{\N}$ is both perfect and nilpotent, and is therefore trivial.
Thus $A\leq L$. Since $T=A\times B$, we obtain $LT=LB$. Moreover,
$[L,B]=1$ and $Z(B)=1$. An element $lb\in LB$, with $l\in L$ and
$b\in B$, centralizes $B$ if and only if $b\in Z(B)=1$. This
proves~\eqref{eq:reconstruct}.
\end{proof}

\begin{theorem}\label{thm:radical-free}
Let $G\ne1$ be a finite group with $\Rad(G)=1$. Write
\[
 T=\Soc(G)=S_1\times\cdots\times S_t,
\]
where the $S_i$ are nonabelian simple groups. Let
\[
 K=\bigcap_{i=1}^t N_G(S_i),\qquad Q=G/T,
 \qquad D=K/T,
\]
and let $\pi:G\longrightarrow Q$ be the quotient map. Then
\begin{equation}\label{eq:radical-free}
 \W(G)=K\cap\pi^{-1}(\W(Q)).
\end{equation}
In particular, $T\leq \W(G)$,
\begin{equation}\label{eq:modsoc}
\W(G)/T=D\cap \W(Q),
\end{equation}
and $|G:\W(G)|=|G:K|\,|D:D\cap\W(Q)|$. Furthermore, $C_G(T)=1$, $K$
is the kernel of the permutation action of $G$ on the simple direct
factors of $T$, and there is an embedding
\begin{equation}\label{eq:outer}
 D\hookrightarrow\prod_{i=1}^t\Out(S_i).
\end{equation}
\end{theorem}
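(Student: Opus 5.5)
First I would establish the structural facts. Since $\Rad(G)=1$, every minimal normal subgroup is nonabelian, and $C_G(T)$ is a normal subgroup meeting $T$ trivially. If it were nontrivial, it would contain a minimal normal subgroup of $G$, and that subgroup would lie in $\Soc(G)=T$, a contradiction. Hence $C_G(T)=1$. Conjugation gives $G\hookrightarrow\Aut(T)$, and by Lemma~\ref{lem:socle}(i) $G$ permutes the $S_i$. So $K$ is the kernel of this permutation action, and $T\le K$. An element of $K$ induces an automorphism of each $S_i$, giving $K\to\prod\Aut(S_i)$, injective since $C_G(T)=1$. Modulo $T=\prod\Inn(S_i)$ this yields the embedding~\eqref{eq:outer}.

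\textbf{Inclusion $\W(G)\le K\cap\pi^{-1}(\W(Q))$.} Each $S_i$ is subnormal in $G$ (normal in $T$), and $S_i^{\N}=S_i$, so $\W(G)\le N_G(S_i)$ for all $i$, i.e.\ $\W(G)\le K$. Theorem~\ref{thm:functor} gives $\pi(\W(G))\le\W(Q)$.

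\textbf{Reverse inclusion.} Let $g\in K$ with $\pi(g)\in\W(Q)$, let $H\sn G$, and put $L=H^{\N}$. By Lemma~\ref{lem:residual-basics}(ii), $LT/T=(HT/T)^{\N}$, and $HT/T\sn Q$, so $g$ normalizes $LT$. With $B$ as in Lemma~\ref{lem:support}, $B$ is a product of some $S_i$, each normalized by $g\in K$; hence $g$ normalizes $B$ and therefore $C_G(B)$. The reconstruction identity~\eqref{eq:reconstruct}, $L=LT\cap C_G(B)$, then shows that $g$ normalizes $L$. Thus $g\in\W(G)$, which proves~\eqref{eq:radical-free}.

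\textbf{Consequences.} Since $T\le K$ and $T=\ker\pi$, we get $T\le\W(G)$. Taking images, $\W(G)/T=D\cap\W(Q)$, which is~\eqref{eq:modsoc}. The index formula is $|G:\W(G)|=|G:K|\,|K:\W(G)|$ with $|K:\W(G)|=|D:D\cap\W(Q)|$.

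The main obstacle is the reverse inclusion. It depends entirely on the reconstruction identity, which has already been proved, together with the observation that $g\in K$ normalizes $B$ because $B$ is a product of factors each fixed by $K$. The remaining steps are routine.
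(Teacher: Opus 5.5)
Your proposal is correct and follows the paper's proof essentially step for step: $C_G(T)=1$ because a minimal normal subgroup inside $C_G(T)$ would lie in $C_G(T)\cap T=Z(T)=1$; the inclusion $\W(G)\le K$ comes from the subnormal factors with $S_i=S_i^{\N}$; the reverse inclusion comes from the reconstruction identity~\eqref{eq:reconstruct}, since $g$ normalizes both $LT$ and $B$; and the embedding~\eqref{eq:outer} comes from the injection $K\hookrightarrow\prod_i\Aut(S_i)$. You state one point tersely, namely that the kernel of $K\to\prod_i\Out(S_i)$ is exactly $T$; this holds because $T$ maps onto $\prod_i\Inn(S_i)$ under that injection, which is the same argument the paper gives.
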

\begin{proof}
A minimal normal subgroup of a finite group is a direct product of
isomorphic simple groups. An abelian minimal normal subgroup would be
contained in $\Rad(G)$. Thus the socle has the displayed form. Its
simple factors are permuted by $G$, and the kernel of this action is
exactly $K$. In particular, $K\trianglelefteq G$ and $T\leq K$.

The subgroup $C=C_G(T)$ is normal in $G$. If $C\ne1$, it contains a
minimal normal subgroup $M$ of $G$. But then $M\leq T\cap C=Z(T)=1$, a
contradiction. Hence $C_G(T)=1$.

Each $S_i$ is subnormal in $G$ and satisfies $S_i^{\N}=S_i$: a
nontrivial nilpotent quotient of a nonabelian simple group is
impossible. Therefore $\W(G)\leq N_G(S_i)$ for every $i$, and
$\W(G)\leq K$. Theorem~\ref{thm:functor} also gives
$\W(G)\leq\pi^{-1}(\W(Q))$. This proves one inclusion
in~\eqref{eq:radical-free}.

For the converse, take $x\in K\cap\pi^{-1}(\W(Q))$, and let $H\sn G$
be arbitrary. Use the notation $A,B,L$ of Lemma~\ref{lem:support}.
The subgroup $\pi(H)=HT/T$ is subnormal in $Q$, and
Lemma~\ref{lem:residual-basics}(ii) yields
\[
 (HT/T)^{\N}=LT/T.
\]
Since $\pi(x)\in \W(Q)$, the element $x$ normalizes $LT$. Since
$x\in K$, it normalizes every $S_i$ and hence normalizes $B$ and
$C_G(B)$. Formula~\eqref{eq:reconstruct} now shows that $x$ normalizes
$L=H^{\N}$. This holds for every subnormal $H$, so $x\in \W(G)$,
proving~\eqref{eq:radical-free}.

Every element of $T$ belongs to $K$ and maps to $1$ under $\pi$.
Thus~\eqref{eq:radical-free} gives $T\leq \W(G)$; taking quotients
gives~\eqref{eq:modsoc}, and with $T\le\W(G)\le K$ the index formula.

Finally, conjugation embeds $K$ into $\prod_i\Aut(S_i)$ because its
kernel is $C_G(T)=1$. Compose this map with the natural maps to the
outer automorphism groups. Its kernel consists exactly of $T$. Indeed,
an element inducing inner automorphisms on all factors has the same
action on $T$ as some element of $T$, and their quotient centralizes
$T$, hence is trivial. This proves~\eqref{eq:outer}.
\end{proof}

\begin{corollary}\label{cor:radical-free-fixed}
Under the hypotheses of Theorem~\ref{thm:radical-free}:
\begin{enumerate}
\renewcommand{\labelenumi}{\textup{(\roman{enumi})}}
\item $\W(G)=K$ if and only if $D\le\W(Q)$;
\item $\W(G)=G$ if and only if every $S_i$ is normal in $G$ and
$\W(G/T)=G/T$; in this case $G/T$ is soluble;
\item if every $\Out(S_i)$ is nilpotent, then $\W(G)=K$, and
$G/\W(G)$ is isomorphic to the permutation group induced by $G$ on
$\{S_1,\ldots,S_t\}$.
\end{enumerate}
\end{corollary}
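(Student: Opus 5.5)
Each part follows from the formula \eqref{eq:radical-free} of Theorem~\ref{thm:radical-free}, $\W(G)=K\cap\pi^{-1}(\W(Q))$, together with \eqref{eq:modsoc}, $\W(G)/T=D\cap\W(Q)$, and the inclusions $T\le\W(G)\le K$.

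For (i), the inclusions give $\W(G)=K$ if and only if $\W(G)/T=K/T=D$. By \eqref{eq:modsoc} this is the statement $D\cap\W(Q)=D$, that is, $D\le\W(Q)$.

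For (ii), $\W(G)=G$ requires $K=G$, which says exactly that every $S_i$ is normal in $G$. In that case $D=Q$, and by \eqref{eq:modsoc} the condition $\W(G)=G$ becomes $Q\cap\W(Q)=Q$, i.e.\ $\W(Q)=Q$. The converse is the same computation read backwards. For solubility, $K=G$ gives $Q=D$, and \eqref{eq:outer} embeds $D$ in $\prod_i\Out(S_i)$. Each $\Out(S_i)$ is soluble by the Schreier conjecture, so $G/T$ is soluble. This is the one point where the classification enters.

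For (iii), the embedding \eqref{eq:outer} makes $D$ nilpotent, and $D=K/T$ is normal in $Q=G/T$. Hence $D\le\Fit(Q)\le\Fit^*(Q)\le\W(Q)$ by Lemma~\ref{lem:Fstar}, and part (i) gives $\W(G)=K$. Since $K$ is the kernel of the permutation action of $G$ on $\{S_1,\ldots,S_t\}$ (Theorem~\ref{thm:radical-free}), the quotient $G/\W(G)=G/K$ is isomorphic to the permutation group induced on the factors.

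No step is a real obstacle. The only care needed is in (iii): the nilpotent group $D$ is normal in $Q$, so it lies in $\Fit(Q)$, and this is what allows Lemma~\ref{lem:Fstar} to be applied.
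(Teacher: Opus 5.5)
Your proposal is correct and follows the paper's proof essentially step for step: (i) from \eqref{eq:modsoc}; (ii) from \eqref{eq:radical-free}, with $K=G$ forcing $Q=D$ and solubility coming from \eqref{eq:outer} and the Schreier conjecture; and (iii) by placing the nilpotent normal subgroup $D$ in $\Fit(Q)\le\W(Q)$ via Lemma~\ref{lem:Fstar} and then applying (i). You also make explicit the identification of $G/\W(G)=G/K$ with the permutation group induced on the factors, which the paper leaves implicit.
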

\begin{proof}
(i) follows from~\eqref{eq:modsoc}. By~\eqref{eq:radical-free},
$\W(G)=G$ is equivalent to $K=G$ and $\W(Q)=Q$, and $K=G$ says
exactly that every $S_i$ is normal in $G$. In that case $Q=D$ is
soluble by~\eqref{eq:outer} and the Schreier conjecture. In (iii),
$D$ is nilpotent by~\eqref{eq:outer} and normal in $Q$, so
$D\le\Fit(Q)\le\W(Q)$ by Lemma~\ref{lem:Fstar}, and (i) applies.
\end{proof}

\begin{remark}
The hypothesis of (iii) holds when every $S_i$ is an alternating
group, a sporadic group or a group $\mathrm{PSL}_2(q)$, whose outer
automorphism groups are abelian \cite{ATLAS,Wilson,Lyons}. In
particular, every almost simple group with such a socle is a
$\W$-group, and for $T=A_5^t\le G\le S_5\wr S_t$ one has
$\W(G)=G\cap S_5^t$. Although $\Rad(G)=1$, the quotient $Q=G/\Soc(G)$
may have a nontrivial soluble radical, so
Theorem~\ref{thm:radical-free} cannot in general be iterated.
\end{remark}

\subsection{Nonsoluble length}
The \emph{nonsoluble length} $\lambda(G)$ is the number of
nonsoluble factors in a shortest normal series each of whose factors
is soluble or a direct product of nonabelian simple groups. We use
\[
 \lambda(G/N)\le\lambda(G),\qquad
 \lambda(G)\le\lambda(N)+\lambda(G/N)
 \quad(N\trianglelefteq G),
\]
see \cite[Section~2]{KS} and \cite[Section~2]{DS}.

\begin{theorem}\label{thm:fixed-structure}
Suppose $\W(X)=X$, and let $L/\Rad(X)=\Soc(X/\Rad(X))$. Then $L/\Rad(X)$
is a direct product of nonabelian simple groups, each of them normal in
$X/\Rad(X)$, and $X/L$ embeds in the direct product of their outer
automorphism groups. Consequently $X/L$ is soluble and
$\lambda(X)\le1$.
\end{theorem}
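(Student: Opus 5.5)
The plan is to pass to $\overline X=X/\Rad(X)$ and apply Corollary~\ref{cor:radical-free-fixed}(ii) there. First, $\overline X$ is a $\W$-group by Theorem~\ref{thm:functor}, since quotients of $\W$-groups are $\W$-groups. Second, $\Rad(\overline X)=1$: the preimage of a soluble normal subgroup of $\overline X$ is soluble and normal in $X$, hence lies in $\Rad(X)$. If $\overline X=1$, then $X$ is soluble, $L=X$, the socle is the empty product, and all assertions are trivial with $\lambda(X)=0$. So assume $\overline X\ne1$.

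Then Theorem~\ref{thm:radical-free} applies to $\overline X$. Its socle $T=L/\Rad(X)$ is a direct product $S_1\times\cdots\times S_t$ of nonabelian simple groups, and $C_{\overline X}(T)=1$. Corollary~\ref{cor:radical-free-fixed}(ii) then shows that every $S_i$ is normal in $\overline X$; this is the second assertion. Consequently the subgroup $K=\bigcap_iN_{\overline X}(S_i)$ equals $\overline X$, and $D=K/T=\overline X/T\cong X/L$. Now the embedding \eqref{eq:outer} gives $X/L\hookrightarrow\prod_i\Out(S_i)$. By the Schreier conjecture, which holds by the classification, each $\Out(S_i)$ is soluble, so $X/L$ is soluble.

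For the length bound, use the normal series
\[
 1\le\Rad(X)\le L\le X.
\]
Its factors are, in order, soluble, a direct product of nonabelian simple groups, and soluble. Only one factor is nonsoluble, so $\lambda(X)\le1$ directly from the definition of nonsoluble length. Alternatively, one can apply subadditivity twice.

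There is no real obstacle, since the substantive work is in Theorem~\ref{thm:radical-free} and its corollary. The only points needing care are the checks that $\Rad(\overline X)=1$ and that the quotient $\overline X/T$ in the corollary is exactly $X/L$. The reliance on the classification enters only through the solubility of the $\Out(S_i)$.
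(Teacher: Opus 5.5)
Your proposal is correct and follows the paper's proof step for step. You pass to $X/\Rad(X)$, a $\W$-group with trivial soluble radical, then apply Theorem~\ref{thm:radical-free} and Corollary~\ref{cor:radical-free-fixed}(ii) with the Schreier conjecture, and read off $\lambda(X)\le1$ from the series $1\le\Rad(X)\le L\le X$; you also make explicit two points the paper leaves implicit, namely that $\Rad(X/\Rad(X))=1$ and that $K=X/\Rad(X)$, so $D\cong X/L$.
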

\begin{proof}
Let $Y=X/\Rad(X)$, a $\W$-group by Theorem~\ref{thm:functor}. If
$Y=1$ there is nothing to prove. Otherwise $\Rad(Y)=1$, and
Theorem~\ref{thm:radical-free} and
Corollary~\ref{cor:radical-free-fixed}(ii) give the assertions about
$\Soc(Y)$ and $Y/\Soc(Y)$; the latter is soluble by the Schreier
conjecture. The series $1\le\Rad(X)\le L\le X$ shows that
$\lambda(X)\le1$.
\end{proof}

\begin{corollary}\label{cor:lambda}
For every group $G$, $\lambda(\W(G))\le1$ and
$\lambda(G)\le\ellW(G)$.
\end{corollary}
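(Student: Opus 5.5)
The corollary has two parts: the first concerns the single subgroup $\W(G)$, and the second follows from it by subadditivity along the upper $\W$-series.

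\emph{First assertion.} By Theorem~\ref{thm:structure}(iv), $X=\W(G)$ satisfies $\W(X)=X$. Theorem~\ref{thm:fixed-structure} then gives $\lambda(\W(G))\le1$ at once.

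\emph{Second assertion.} Put $n=\ellW(G)$ and consider the upper $\W$-series
\[
 1=\W_0(G)\le\W_1(G)\le\cdots\le\W_n(G)=G .
\]
Each term is characteristic in $G$, hence normal in $G$. Each factor satisfies
\[
 \W_{i+1}(G)/\W_i(G)=\W\bigl(G/\W_i(G)\bigr),
\]
which is a $\W$-group by Theorem~\ref{thm:structure}(iv), applied to the group $G/\W_i(G)$. Theorem~\ref{thm:fixed-structure} therefore gives
\[
 \lambda\bigl(\W_{i+1}(G)/\W_i(G)\bigr)\le1 \qquad (0\le i<n).
\]
We then argue by induction on $i$ that $\lambda(\W_i(G))\le i$. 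The case $i=0$ is trivial. For the inductive step, $\W_i(G)\trianglelefteq\W_{i+1}(G)$, so the subadditivity inequality $\lambda(Y)\le\lambda(N)+\lambda(Y/N)$ applies with $Y=\W_{i+1}(G)$ and $N=\W_i(G)$. It gives
\[
 \lambda(\W_{i+1}(G))\le i+1 .
\]
Taking $i=n$ yields $\lambda(G)\le n=\ellW(G)$.

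The argument is routine. The only points to check are that each layer really is a $\W$-group, which holds because it equals $\W$ of a quotient, and that the subadditivity of $\lambda$ is used for a normal subgroup. Both are already in place.
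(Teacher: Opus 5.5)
Your proof is correct and follows the paper's argument: each layer $\W_{i+1}(G)/\W_i(G)=\W(G/\W_i(G))$ is a $\W$-group by Theorem~\ref{thm:structure}(iv), so it has nonsoluble length at most one by Theorem~\ref{thm:fixed-structure}, and subadditivity of $\lambda$ along the upper $\W$-series gives $\lambda(G)\le\ellW(G)$. Your explicit induction on $\lambda(\W_i(G))\le i$ simply spells out the subadditivity step that the paper states in one line.
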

\begin{proof}
By Theorem~\ref{thm:structure}(iv), each layer
$\W_{i+1}(G)/\W_i(G)=\W(G/\W_i(G))$ is a $\W$-group, so it has
nonsoluble length at most one by Theorem~\ref{thm:fixed-structure}.
Subadditivity along the upper series proves the assertion.
\end{proof}

\begin{remark}
The index $|G:\W(G)|$ does not control $\ellW(G)$ in either direction
beyond trivial bounds. On the one hand, for $G\ne1$,
$\ellW(G)=1+\ellW(G/\W(G))$ is at most one plus the number of prime
factors of $|G:\W(G)|$. On the other hand, if $S$ is nonabelian
simple and $G=S\wr C_n$ with $n\ge2$, then every coordinate subgroup
$S_i$ is subnormal with $S_i^{\N}=S_i$, so
$\W(G)\le\bigcap_iN_G(S_i)=S^n=\Fit^*(G)\le\W(G)$. Hence
$|G:\W(G)|=n$ while $\ellW(G)=2$.
\end{remark}

\section*{Acknowledgements}
This work was supported by the National Natural Science Foundation of
China (grants 12371021 and 12101135). The third author was supported by
grant PID2024\allowbreak-159495NB\allowbreak-I00 from the Ministerio de
Ciencia, Innovaci\'{o}n y Universidades, Spain, and by grant
CIAICO/\allowbreak2023/\allowbreak007 from the Conselleria
d'Educaci\'{o}, Universitats i Ocupaci\'{o}, Generalitat Valenciana.
During this work, the third author also held a Specially Appointed
Professor position at Nantong University.

\section*{AI Declaration}
During the preparation of this work the authors used the AI assistants
Claude (Anthropic) and Codex (OpenAI) to develop and check mathematical
arguments, to propose examples, to write and run GAP programs for
exploratory computations, to check bibliographic references, and to edit
the text. The authors take full responsibility for the
mathematical content.

\end{document}